\documentclass[11pt,oneside,english]{amsart}
\usepackage{subfiles, mathtools}
\usepackage{amssymb}
\usepackage[colorlinks]{hyperref}
\usepackage{graphicx}
\usepackage[pagewise]{lineno}
\usepackage{ulem}
\usepackage{float}
\usepackage{subcaption}

\makeatletter \theoremstyle{plain}
\newtheorem{thm}{Theorem}[section]
\newtheorem{lem}[thm]{Lemma}
\newtheorem{prop}[thm]{Proposition}

\numberwithin{equation}{section} 
\numberwithin{figure}{section} 
\theoremstyle{plain}
\theoremstyle{definition}

\newtheorem{rem}[thm]{Remark}
  \newtheorem{cor}[thm]{Corollary}

\newcommand{\bH}{{{\bf H}}}

\newcommand{\Aa}{{\mathcal {AA}}}

\newcommand{\C}{{{\mathbb C}}}

\newcommand{\R}{{{\mathbb R}}}

\newcommand{\arctanh}{\mathrm{arctanh}}
\newcommand{\e}{{\rm e}}

\usepackage{babel}

\makeatother

\begin{document}

\title[Sub-{R}iemannian geodesics in the affine-additive Group]{Sub-{R}iemannian geodesics in the affine-additive Group}

\author{Elia Bubani}
\author{Ioannis D. Platis}
\author{Georgios I. Simantiras}
\author{Dimitrios Tsolis}

\address{ }

\keywords{Pontryagin Maximum Principle, Time optimal synthesis, Carnot-Carath\'eodory spheres, Affine-additive group.\\
{\it 2020 Mathematics Subject Classification:} 22E30, 46J15, 53C17, 53D10\\
The first author was supported by the Swiss National Science Foundation, Grant Nr. 204501. The second author was supported by the Medicus program, Grant Nr. 83765.
}

\newcommand{\Addresses}{{
  \bigskip
  \footnotesize

  E.~Bubani (Corresponding author),  \textsc{University of Fribourg, Department of Mathematics,
Ch. du Musée 23,
1700 Fribourg,
Switzerland.}\par\nopagebreak
  \textit{E-mail address}, E.~Bubani: \texttt{elia.bubani@unifr.ch}

  \medskip

  I.D.~Platis, \textsc{Department of Mathematics, University of Patras, Panepistimioupolis, 26504 Rion, Achaia, Greece}\par\nopagebreak
  \textit{E-mail address}, I.D.~Platis: \texttt{idplatis@upatras.gr}

   \medskip

  G.I.~Simantiras, \textsc{Department of Mathematics, University of Patras, Panepistimioupolis, 26504 Rion, Achaia, Greece}\par\nopagebreak
  \textit{E-mail address}, G.I.~Simantiras: \texttt{gsimantiras@upatras.gr}

  \medskip
  D.~Tsolis, \textsc{Department of Mathematics, University of Patras, Panepistimioupolis, 26504 Rion, Achaia, Greece}\par\nopagebreak
  \textit{E-mail address}, D.~Tsolis: \texttt{d.tsolis@upatras.gr}

}}

\date{\today}

\begin{abstract}
We derive the sub-Riemannian geodesics of the affine-additive group using the Pontryagin Maximum Principle (PMP) and compute the associated cut times. We then proceed to construct the Carnot-Carath\'eodory sphere of the affine-additive group as well as the flask surface; the latter is the analogue of the Pansu sphere (bubble set) in the first Heisenberg group. 
\end{abstract}

\maketitle

\tableofcontents

\section{Introduction}
Sub-Riemannian geometry has evolved into an extensively studied domain with profound implications not only for pure mathematics and geometric analysis, but also for physics and the applied sciences. Sub-Riemannian manifolds serve highly effectively as models in numerous scientific subjects, including brain modeling and visual perception \cite{Hof-VP,H-VC,CS-RT,HP-RT,Citti-Cort}, quantum systems \cite{Boscain}, robot motion planning \cite{LSL-Robot, Mont}, and financial mathematics \cite{P-Fin,P-Fin2}. The fundamental characteristic of a sub-Riemannian manifold is that it does not allow motion in all directions. Instead, motion is strictly confined to a completely non-integrable horizontal distribution of the tangent bundle, that is, a fixed admissible subspace within any tangent space. The distance between two points is thus defined as the infimum of the lengths of these admissible paths. Assuming complete non-integrability for the distribution grants the distance to be finite, geodesic, and perfectly induces the manifold topology, as guaranteed by Chow's Theorem \cite[Chapter 2]{Mont}. Important classes of examples where these results apply include the Heisenberg group ${\mathbb H}$ and its higher-dimensional analogues \cite{CDPT}. For a broader geometric context, one might also refer to comprehensive treatises that bridge these structures with related fields like Cauchy-Riemann manifolds \cite{Dragomir} and complex hyperbolic geometry \cite{Goldman}.

The study of length-minimizing curves (sub-Riemannian geodesics) within this horizontal distribution constitutes the central challenge of the field. Finding sub-Riemannian geodesics is a variational problem which naturally lifts to the cotangent bundle, leading to intricate sets of Hamiltonian equations that govern the flow of horizontal trajectories. Since the foundational developments in the 1960s, the Pontryagin Maximum Principle (PMP, \cite{PBGM-PMP}) from optimal control theory has remained the principal framework for solving these equations.

In this work, we investigate the sub-Riemannian geometry of the affine-additive group, denoted by $\mathcal{AA}$. We model this three-dimensional Lie group as the product space $\mathcal{AA} := \mathbf{H}^1_{\mathbb{C}} \times \mathbb{R}$, where $\mathbf{H}^1_{\mathbb{C}} = \{(\lambda, t) : \lambda > 0, t \in \mathbb{R}\}$ denotes the hyperbolic plane. The group operation is defined by the law $$(\lambda, t, a) \cdot (\lambda', t', a') = (\lambda \lambda', \lambda t' + t, a + a'),$$ and we equip the space with the left-invariant contact 1-form $$\vartheta = da - \frac{dt}{\lambda}.$$ The horizontal distribution $\mathcal{H} = \ker\vartheta$ is completely non-integrable; hence the group is endowed with a proper contact sub-Riemannian manifold structure when equipped with a left-invariant metric that orthonormalizes the horizontal frame.

Three-dimensional contact Lie groups offer the most fundamental, non-trivial setting for studying sub-Riemannian phenomena. While the Heisenberg group--the prototypical example of a nilpotent Lie group--has been extensively documented in the literature, $\mathcal{AA}$ represents a fundamentally different geometric landscape. Crucially, the affine-additive group is non-unimodular; the trace of its adjoint representation does not vanish for every left-invariant vector field. This algebraic distinction translates into deep measure-theoretic and geometric divergences. For instance, $\mathcal{AA}$ possesses a non-vanishing 4-capacity at infinity, classifying it as a hyperbolic space, which implies it is not globally quasiconformally equivalent to the parabolic Heisenberg group, despite the two being globally contactomorphic~\cite{BBP1}.

A fascinating outcome of our study is the strong geometric parallel between horizontal projections of geodesics in $\mathcal{AA}$ and classical hyperbolic geometry. Specifically, the projections of sub-Riemannian extremals onto $\mathbf{H}^1_{\mathbb{C}}$ correspond intimately to hyperbolic geodesics and horocycles. We construct the Carnot-Carath\'eodory (CC) unit sphere of the affine-additive group, as well as a compact topological structure we refer to as the \textit{flask surface}. This surface encapsulates the fundamental continuous symmetries of the sub-Riemannian distance and illustrates the rigid geometric obstructions inherent to non-unimodular spaces; all in all, it serves as the analogue of the Pansu sphere (bubble set) in the first Heisenberg group (see \cite{Pansu1,Pansu2}). Keeping in mind Theorem \ref{Thm: cut-time}, one may derive an analogous statement to Theorem \ref{thm: CCSphere} for general CC-spheres of radius $r>0$, further expanding upon the literature surrounding CC-spheres and sub-Finsler structures \cite{BR-CCmetrics,BBDS-Finslerspheres}. We underline a quite striking geometric disparity: while in the sub-Riemannian Heisenberg group, CC spheres of radius $r\neq1$ are neatly recovered by anisotropic dilations of factor $r$ with respect to the unit CC sphere \cite{Monti-CCballs}, the profile of $\mathcal{S}_\Aa(\e,r)$ in the affine-additive group may vary without a particular geometric pattern by changing $r>0$. To visualise this contrast, we present plots for $\mathcal{S}_\Aa(\e,1)$ and $\mathcal{S}_\Aa(\e,5)$.

To derive sub-Riemannian geodesics, we deploy the standard Hamiltonian formalism of the PMP. Because $\mathcal{AA}$ is a contact three-dimensional manifold, non-trivial abnormal extremals are ruled out, ensuring that all geodesics are determined precisely as the projections of normal extremal curves from the cotangent bundle. We calculate the exact analytic expressions for these trajectories and, consequently, study their properties. A critical component of this work involves the characterization of the cut time: this is when Pontryagin extremals cease to be geodesics. This is accomplished using an extended Hadamard technique which has already been described and utilized in \cite{Sachkov,ABB,AS-Engel,AH-Cartan}. Essentially, we first locate Maxwell points (those are points where distinct geodesics of equal length intersect) and then identify conjugate points via the vanishing of the Jacobian of the sub-Riemannian exponential map. We conclude using Hadamard theorem proving the correspondence between conjugate times and cut times.
\medskip

This paper is organized as follows.
In Section \ref{sec-prel}, we provide preliminaries for our study starting from describing the sub-Riemannian structure on $\mathcal{AA}$ and detailing its left-invariant vector fields. Section \ref{sec-geod} is devoted to the Time Optimal Synthesis: that is, the explicit construction of the full set of geodesics (Theorem \ref{thm: Geodesics of AA}) together with their cut time (Theorem \ref{Thm: cut-time}). This relies first on computing the Hamiltonian equations from the PMP and secondly on applying the Hadamard technique with an educated guess of the cut times. In Section \ref{sec-ccflask}, we focus in surfaces constructed by geodesics of $\Aa$, culminating in the derivation of the CC-unit sphere (Theorem \ref{thm: CCSphere}) and the Flask Surface (Propositions \ref{prop: flask 1} and \ref{Prop: flask 2}).
\medskip

\noindent{\textbf{Acknowledgements}} The authors would like to thank Zolt\'an M. Balogh (Bern, Switzerland), Enrico Le Donne (Fribourg, Switzerland) and Nicola Paddeu (Fribourg, Switzerland) for fruitful discussions on the subject of this paper. The authors claim that no LLM has been used in this paper except for pictures \ref{pic: CC-sphere r=1} and \ref{pic: CC-sphere r=5}. We used the assistance of ChatGPT-5.6 Sol for writing a code then submitted to Wolfram Mathematica to produce such plots.

\section{Preliminaries}\label{sec-prel}
\subsection{The affine-additive group \texorpdfstring{$\Aa$}{}}

The \textit{affine-additive group} $\mathcal{AA}$ (for a detailed description, see \cite{Bubani},\cite{BPPT}) is defined as $\mathcal{AA}:=\mathbf{H}^1_{\mathbb{C}} \times \mathbb{R}$, where $\mathbf{H}^1_\C:=\{(\lambda,t):\lambda>0,t\in\R\}$ denotes the hyperbolic plane with its Riemannian metric given by $g_{\text{hyp}}:=\frac{d\lambda^2+dt^2}{\lambda^2}$. The group law is given by
$$
(\lambda,t,a) \cdot (\lambda',t',a')=(\lambda \lambda' , \lambda t'+t,a+a'),
$$
the neutral element is denoted by ${\rm e}=(1,0,0)$ and the inverse element is $(\lambda,t,a)^{-1}=(1/\lambda,-t/\lambda,-a)$.
Let $p=(\lambda,t,a)\in\Aa$, a basis for the tangent space ${\rm T}_p\,\mathcal{AA}$ comprises the left-invariant vector fields
$$U=\lambda \partial_\lambda, \, V=\lambda \partial_t+\partial_a, \, W =\partial_a.$$
The only non-vanishing Lie bracket relation is $[U,V]=V-W$. 
We equip $\Aa$ with the contact form $$\vartheta =da-\frac{dt}{\lambda},$$ so that $\ker\vartheta=\text{span}\{U,V\}$ and $W$ is the Reeb vector field. We therefore define the \textit{horizontal distribution} to be $\mathcal{H}_{\Aa}:=\ker\vartheta$. The corresponding coframe of $\{U,V,W\}$ is given by $$\omega_1= \frac{d\lambda}{\lambda}, \; \omega_2=\frac{dt}{\lambda}, \; \vartheta=da-\frac{dt}{\lambda}.$$

\subsection{Sub-Riemannian distance on \texorpdfstring{$\Aa$}{}}
We begin this section by describing the sub-Riemannian distance on $\Aa$ in detail.
An absolutely continuous curve $\gamma:[c,d]\to\Aa$, $\gamma(s)=(\lambda(s),t(s),a(s)),$ $\,s\in[c,d]$, is \textit{horizontal} when $\dot\gamma(s)\in\ker\vartheta_{\gamma(s)}$ for almost every $s\in[c,d]$. This is equivalent to the o.d.e.
\begin{equation}\label{horiz ode}
    \dot a(s)-\frac{\dot t(s)}{\lambda(s)}=0, \text{ a.e. } s\in[c,d].
\end{equation}
We denote by $g_{\Aa}(\cdot,\cdot)$ the sub-Riemannian metric on $\mathcal{H}_{\Aa}$ which makes $\{ U, V\}$ an orthonormal basis. In this way, the horizontal velocity $|\dot\gamma|_H$ of $\gamma$ is defined by the relation
\begin{equation}\label{horiz speed}
|\dot\gamma|_H:=\left(g_{\Aa}(\dot\gamma,U_\gamma)^2+g_{\Aa}(\dot\gamma,V_\gamma)^2\right)^{1/2}=\frac{\sqrt{\dot \lambda^2+\dot t^2}}{\lambda}.
\end{equation}
Let $\pi:\Aa\to\bH^1_\C$ be the canonical projection given by $\pi(\lambda,t,a)=(\lambda,t),\,(\lambda,t,a)\in\Aa$. The horizontal length of $\gamma$ is
\begin{equation}\label{lhyphor}
\ell(\gamma):=\int_c^d |\dot\gamma|_H(s)\,ds= \ell_{\text{hyp}}(\pi(\gamma)),
\end{equation}
where $\ell_{\text{hyp}}$ denotes the hyperbolic length of the curve $\pi(\gamma)\subset\bH_\C^1$ with respect to the metric $g_{\text{hyp}}$.

The corresponding sub-Riemannian or Carnot-Carath\'eodory distance $d_{\Aa}$ associated to the sub-Riemannian metric $g_{\Aa}(\cdot,\cdot)$ is defined for all $p, q  \in\Aa$ as follows:
\begin{equation*}
    d_{\Aa}(p,q)=\inf_{\gamma\in \Gamma_{p,q}}\{\ell(\gamma) \},
\end{equation*}
where $\Gamma_{p,q}$ is the following family of curves:
$$
\Gamma_{p,q}=\{\gamma:[0,1]\to\Aa \text{ horizontal and such that}\;
       \gamma(0)=p,\;
       \gamma(1)=q\}.
$$
We underline here that since $\mathcal{H}_\Aa$ is bracket generating, the distance $d_{\Aa}$ is finite, geodesic and
induces the manifold topology (see Montgomery \cite[Chapter 2]{Mont}).\\
The isometry group with respect to $d_\Aa$ is denoted with ${\rm Isom}_{d_{\Aa}}(\mathcal{AA})$ and consists of $\Aa\times\mathbb{Z}_2$: it is generated by left translations $L_{(\lambda_0, t_0,a_0)}:\mathcal{AA}\to\mathcal{AA}$ with respect to $(\lambda_0, t_0,a_0)\in\Aa$ and
the reflection $j:\Aa \to \Aa$ given by
\begin{eqnarray*}
j(\lambda ,t,a) =(\lambda ,-t,-a). 
\end{eqnarray*}
A horizontal curve $\gamma:[c,d]\to\Aa$ is a \textit{geodesic} if 
\begin{equation*}
    d_\Aa(\gamma(s_1),\gamma(s_2))=|s_1-s_2|,\quad\forall s_1,s_2\in[c,d].
\end{equation*}
\subsection{PMP and cut times via Hadamard technique}\label{Preliminaries}
This section contains the main tools and results for our work. Before introducing Pontryagin Maximum Principle (PMP) we need to recall basic Hamiltonian formalism. For a point $p=(\lambda,t,a)\in\Aa$ and a covector $\xi=(\xi_1,\xi_2,\xi_3)\in{\rm T}^*_p \Aa$, canonical coordinates $(p,\xi)$ on ${\rm T}^*\Aa$ are defined in such a way that every element $\mu\in{\rm T}^*\Aa$ is written as 
 \begin{equation}\label{eq: mu}
     \mu=\xi_1\,d\lambda\rvert_{p}+\xi_2\,dt\rvert_{p}+\xi_3\,da\rvert_{p}.
 \end{equation}
 In the previous section we considered a sub-Riemannian structure on $\Aa$ with respect to the horizontal distribution $\mathcal{H}_\Aa=\text{span}\{U,V\}$. The fiberwise-linear functions $h_i:{\rm T}^*\Aa\to\R$, $i\in\{1,2\}$, associated to $U$ and $V$ are given by
\begin{equation}\label{eq: h1, h2 def}
    h_1(\mu):=\langle\mu,U_p\rangle,\quad
    h_2(\mu):=\langle\mu,V_p\rangle,
\end{equation}
where $\mu$ is identified as in \eqref{eq: mu} and  $\langle\cdot,\cdot\rangle$ denotes the standard pairing between tangent vectors and their covectors. The \textit{sub-Riemannian Hamiltonian} $H:{\rm T}^*\Aa\to\R$ is given by 
\begin{equation*}
    H(\mu)=\frac{1}{2}\left(h_1(\mu)^2+h_2(\mu)^2\right)
\end{equation*}
The functions $h_i$, $i\in\{1,2\}$ are associated with the \textit{Hamiltonian vector fields} $\overrightarrow{h_i}$, $i\in\{1,2\}$ as follows: we make use of canonical coordinates $(p,\xi)$ for $\mu\in{\rm T}^*\Aa$, then
\begin{equation*}
    \overrightarrow{h_i}=\frac{\partial h_i}{\partial{\xi_1}}\,\partial_\lambda+\frac{\partial h_i}{\partial{\xi_2}}\,\partial_t+\frac{\partial h_i}{\partial{\xi_3}}\,\partial_a-\frac{\partial h_i}{\partial{\lambda}}\,\partial_{\xi_1}-\frac{\partial h_i}{\partial{t}}\,\partial_{\xi_2}-\frac{\partial h_i}{\partial{a}}\,\partial_{\xi_3}, \quad i\in\{1,2\}.
\end{equation*}
Pontryagin Maximum Principle gives the necessary condition satisfied by geodesics in the sub-Riemannian geometry of $\Aa$ (see for example \cite[Theorem 12.10]{AS-CT} or \cite[Theorem 4.20]{ABB}). Curves satisfying such condition are called \textit{extremal trajectories}.
\begin{thm}[Characterization of Pontryagin extremals]\label{Thm: PMP}
    Let $\gamma:[0,T]\to\Aa$ be a geodesic and let $\bar{u}(\cdot)=(\bar{u_1}(\cdot),\bar{u_2}(\cdot))$ be the corresponding minimal control, i.e., for a.e. $s\in[0,T]$,
    \begin{equation*}
        \dot{\gamma}(s)=\bar{u_1}(s)U_{\gamma(s)}+\bar{u_2}(s)V_{\gamma(s)},\quad \ell(\gamma)=\int_0^T \sqrt{\bar{u_1}^2(s)+\bar{u_2}^2(s)}\,ds=d_\Aa(\gamma(0),\gamma(T)),
    \end{equation*}
    with $\sqrt{\bar{u_1}^2(s)+\bar{u_2}^2(s)}=1$ a.e. on $[0,T]$. Then there exists a curve $\mu:[0,T] \to{\rm T}^*\Aa$ with initial data $\mu(0)$ identified with $(\gamma(0),\xi(0))$, such that
    \begin{equation}\label{eq: Hamilton eq}
        \dot{\mu}(s)=\sum_{i=1}^2\bar{u_i}(s)\overrightarrow{h_i}(\mu(s)),\quad\text{a.e. }s\in [0,T],
    \end{equation}
and such that one of the following conditions is satisfied:
\begin{align}
    h_i(\mu(s))&=\bar{u_i}(s), \;\forall s\in[0,T],\;\forall i\in\{1,2\}, \label{eq: Normal extr}\tag{N}\\
    h_i(\mu(s))&=0, \;\forall s\in[0,T],\;\forall i\in\{1,2\} \label{eq: Abnormal extr}\tag{A}.
\end{align}
Moreover, in case \eqref{eq: Abnormal extr} one has $\mu(s)\neq0$ for all $s\in[0,T]$.
\end{thm}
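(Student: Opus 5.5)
This theorem is the Pontryagin Maximum Principle specialised to the sub-Riemannian structure $(\Aa,\mathcal{H}_\Aa,g_\Aa)$. I would not rederive PMP from scratch. Instead I would check that the geodesic problem on $\Aa$ is a standard optimal control problem to which \cite[Theorem 12.10]{AS-CT} or \cite[Theorem 4.20]{ABB} applies, and then translate their conclusion into the notation \eqref{eq: h1, h2 def}, \eqref{eq: Hamilton eq}.

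\emph{Step 1: formulate the control problem.} The system is $\dot\gamma=u_1U_\gamma+u_2V_\gamma$ with $u\in L^\infty([0,T],\R^2)$. The fields $U,V$ are smooth and linearly independent at every point, and they generate $\mathcal{H}_\Aa$. Every horizontal curve therefore has a unique control with $|\dot\gamma|_H=|u|$, by \eqref{horiz speed}. Now let $\gamma$ be a geodesic parametrised by arc length. Then $\ell(\gamma)=T=d_\Aa(\gamma(0),\gamma(T))$. By Cauchy--Schwarz, minimising length with $|u|=1$ is equivalent to minimising the energy $J(u)=\frac12\int_0^T(u_1^2+u_2^2)\,ds$ among controls on $[0,T]$ joining the endpoints. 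So $\bar u$ is an energy minimiser, and the cited theorem applies to it.

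\emph{Step 2: read off the conclusion.} The cited PMP gives $\nu\in\{0,1\}$ and a Lipschitz curve $\mu(s)\in{\rm T}^*_{\gamma(s)}\Aa$ with $(\nu,\mu(s))\neq 0$ for all $s$. This $\mu$ satisfies $\dot\mu=\overrightarrow{H_u}(\mu)$ at $u=\bar u$, where $H_u(\mu)=\langle\mu,u_1U+u_2V\rangle-\frac{\nu}{2}|u|^2=u_1h_1+u_2h_2-\frac\nu2|u|^2$. Since $\overrightarrow{H_u}=\sum_i u_i\overrightarrow{h_i}$ for frozen $u$, this is exactly \eqref{eq: Hamilton eq}, written in the canonical coordinates \eqref{eq: mu}. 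The maximality condition says $\bar u(s)$ maximises $u\mapsto H_u(\mu(s))$ for a.e. $s$.
\begin{itemize}
\item If $\nu=1$, the maximised function is strictly concave, and its maximiser is $u_i=h_i(\mu(s))$. This is \eqref{eq: Normal extr}. Both sides are continuous: $\mu$ is absolutely continuous, and $\bar u$ can be replaced by this continuous representative. Hence the identity holds for all $s$.
\item If $\nu=0$, a linear function $u_1h_1+u_2h_2$ is maximised on the unit circle, where $|\bar u|=1$. This is compatible with a constraint-free formulation only if $h_1=h_2=0$. The cleanest route is to use the energy formulation with unbounded controls: there boundedness of the linear maximum forces $h_1(\mu(s))=h_2(\mu(s))=0$, which is \eqref{eq: Abnormal extr}. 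Nontriviality of $(\nu,\mu)$ with $\nu=0$ gives $\mu(s)\neq0$ for all $s$.
\end{itemize}

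\emph{Main obstacle.} The delicate point is the passage between the length-minimisation problem with $|u|=1$ and the energy problem used in the references. One has to be careful so that the multiplier normalisation, and the dichotomy with $\nu\in\{0,1\}$ after rescaling $\mu$, come out exactly as \eqref{eq: Normal extr}/\eqref{eq: Abnormal extr}, and so that the "for all $s$" statements follow from the a.e. ones by continuity. I would handle this as in \cite[Section 4]{ABB}. There, a length minimiser with constant speed is shown to minimise energy over the same interval. Their Theorem 4.20 then yields the normal/abnormal alternative with covector equation $\dot\mu=\sum\bar u_i\overrightarrow{h_i}(\mu)$ directly. The remaining work is only the coordinate computation that $h_1=\lambda\xi_1$ and $h_2=\lambda\xi_2+\xi_3$, which confirms the formula for $\overrightarrow{h_i}$ as stated.
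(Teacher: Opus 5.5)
Your proposal is correct and takes the same approach as the paper. The paper gives no proof of its own: it simply invokes the sub-Riemannian Pontryagin Maximum Principle from \cite[Theorem 12.10]{AS-CT} and \cite[Theorem 4.20]{ABB} for the frame $\{U,V\}$. Your unpacking (the length-to-energy reduction, the dichotomy $\nu\in\{0,1\}$, and passing from a.e.\ to every $s$ by continuity) is accurate but more than needed, since \cite[Theorem 4.20]{ABB} already gives the alternative (N)/(A), including $\mu(s)\neq 0$ in the abnormal case, directly for constant-speed length minimisers.
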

\begin{rem}
An extremal trajectory $\gamma:[0,T]\to\Aa$ which satisfies condition \eqref{eq: Normal extr} (respectively \eqref{eq: Abnormal extr}) is called \textit{normal trajectory} (respectively \textit{abnormal trajectory}). If an extremal trajectory satisfies only \eqref{eq: Abnormal extr}, then it is said to be a \textit{strictly abnormal trajectory}.
\end{rem}
We are going to present the sub-Riemannian exponential map and then Maxwell, conjugate and cut times associated to an extremal trajectory. For these topics we refer the reader to  \cite[Chapter 8]{ABB} and \cite{Sachkov}. \\
We denote $\R_+=[0,\infty)$ and recall that an extremal trajectory is a curve $\gamma:\R_+\to\Aa$ with initial data $\gamma(0)={\rm e}$ and $\xi(0)\in{\rm T}^*_{\rm e}\Aa$ which satisfies conditions given in Theorem \ref{Thm: PMP}. In this way the dependence of $\gamma$ with respect to $\xi(0)$ is adopted with the subscript $\gamma_{\xi(0)}$. \\ We consider the set 
\begin{equation*}
    C_{\rm e}:=\left\{ \xi(0)\in {\rm T}_{\rm e}^*\Aa \,: \, H(\xi(0))=\frac{1}{2}\right\},
\end{equation*} 
and denote by $\mathrm{exp}_{\rm e}:C_{\rm e}\times\R_+\to \Aa$, the \textit{sub-Riemannian exponential map} given by:
\begin{equation*}
     \left(\xi(0), s \right)\mapsto \gamma_{\xi(0)}(s),
\end{equation*}
where $\gamma$ is the extremal trajectory with initial data $\gamma(0)={\rm e}$ and $\xi(0)\in{\rm T}^*_{\rm e}\Aa$. \\
A point $p=\gamma(T)\in\Aa$, with $T>0$, of an extremal trajectory $\gamma(\cdot)=\mathrm{exp}_{\rm e}(\xi(0),\cdot):\R_+\to\Aa$ is called a \textit{Maxwell point along} $\gamma$, if there is another extremal trajectory $\tilde{\gamma}(\cdot)=\mathrm{exp}_{\rm e}(\tilde{\xi(0)},\cdot):\R_+\to\Aa$ such that  $\tilde{\gamma}(s)\neq \gamma(s)$ for all $s\in (0,T)$ and $p=\tilde{\gamma}(T)=\gamma(T)$. The first instant where the latter condition happens is called \textit{first Maxwell time} and is denoted with $T^1_{\rm Max}$. The set of all Maxwell points of $\Aa$ is called \textit{the Maxwell set} and we write it as $\mathrm{Max}$. Following \cite[Section 4.6]{Sachkov}, it is usually hard to describe all Maxwell points and to determine the first of
them. To this end, a \textit{symmetry} is defined to be a pair of mappings $\varepsilon:C_{\rm e}\times\R_+\to C_{\rm e}\times\R_+$ and $\sigma:\Aa\to\Aa$ such that 
\begin{equation}\label{eq: symmetry}
    \sigma\circ\exp_{\rm e}=\exp_{\rm e}\circ\,\varepsilon.
\end{equation}
We are going to explain the role of symmetries in determining Maxwell times in Section \ref{sec: cut-time analysis}.

 A point $p \in \Aa$ is called \textit{conjugate to} ${\rm e}\in\Aa$ if there exists a time $s>0$ and an initial covector $\xi(0)$ such that $p=\mathrm{exp}_{\rm e}(\xi(0),s)$ and $(\xi(0),s)$ is a critical point of $\mathrm{exp}_{\rm e}$. The specific time that it happens is called \textit{conjugate time} and is denoted with $T_{\rm conj}$.\\
 The \textit{cut time} of an extremal trajectory  $\gamma:\R_+\to\Aa $ is defined by \begin{align*}
        T_{\text{cut}} &=\sup \{ T>0: \gamma:[0,T]\to\Aa  \text{ is a geodesic}  \}.
    \end{align*}
    If $T_{\text{cut}}=+\infty$, we say that $\gamma$ has no cut time.
Now we relate the previous concepts with the fact that a normal trajectory cannot be a geodesic after a Maxwell time (see \cite[Lemma 3]{Sachkov}), thus 
\begin{equation*}
    T_{\mathrm{cut}}\leq T_{\mathrm{Max}}^1.
\end{equation*}
In order to determine cut times of extremal trajectories we shall follow the guide of the extended Hadamard technique from \cite[Section 13.4]{ABB}. We present the main step of such guide below after giving a definition.\\
We consider smooth manifolds $X,Y$. We omit definitions of proper and covering maps which can be found in \cite[Chapter 13]{ABB}. A continuous map $F:X\to Y$ is called \textit{finite-sheeted covering map} if it is a covering map such that for all $y\in Y$ the cardinality $|F^{-1}(y)|$ of the fiber $F^{-1}(y)$ is finite (see \cite[Section 1.3]{Hatcher}). If $Y$ is connected, the cardinality of every fiber is constant and it is called number of sheets of $F$. A basic fact (see \cite[Proposition 1.32]{Hatcher}) tells that the number of sheets corresponds to the index $[\pi_1(Y):F_*(\pi_1(X))]$, where $F_* : \pi_1(X)\to \pi_1(Y)$ is the induced homomorphism.\\ The observation that the number of sheets is constant along connected components of the target manifold combined with \cite[Theorem 13.23]{ABB} gives the following.
\begin{thm}\label{Hadamard}
Let $X,Y$ be smooth manifolds and let $F: X \to Y$ be a smooth map such that the following hold 
     \begin{enumerate}
        \item $\dim X =\dim Y,$ 
        \item $X,Y$ are connected,
        \item $F$ is a local diffeomorphism,
        \item $F$ is proper,
    \end{enumerate}
    then $F$ is a finite-sheeted covering map.
\end{thm}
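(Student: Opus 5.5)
The plan is to deduce the statement from \cite[Theorem 13.23]{ABB}, which asserts that a proper local diffeomorphism between connected manifolds of the same dimension is a covering map. The remaining task is to show that the fibers are finite. Hypotheses (1)--(4) are exactly those of that theorem; (1) is needed so that a local diffeomorphism makes sense. So the first step is to invoke it and obtain that $F:X\to Y$ is a covering map. I will also note that $F$ is surjective: $F(X)$ is open because $F$ is a local diffeomorphism, and closed because proper maps to manifolds (locally compact Hausdorff spaces) are closed. Connectedness of $Y$ then gives $F(X)=Y$.

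The second step is finiteness of each fiber. Fix $y\in Y$. The set $\{y\}$ is compact, so by properness $F^{-1}(y)$ is compact. Since $F$ is a local diffeomorphism, every $x\in F^{-1}(y)$ has a neighbourhood $U_x$ on which $F$ is injective. Hence $U_x\cap F^{-1}(y)=\{x\}$, and the fiber is a discrete subset of $X$. A compact discrete space is finite, so $|F^{-1}(y)|<\infty$ for every $y$, and $F$ is a finite-sheeted covering map in the sense defined above.

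Finally, for the remark preceding the statement that the number of sheets is constant: the function $y\mapsto |F^{-1}(y)|$ is locally constant for any covering map, using an evenly covered neighbourhood. Since $Y$ is connected, this function is constant. I do not expect a real obstacle. The only point needing care is that \cite[Theorem 13.23]{ABB} may be stated with slightly different hypotheses, for instance simply connected $Y$, in which case it gives a diffeomorphism directly. If the cited version does not give the covering property outright, I would prove it directly. Take $y\in Y$ with fiber $\{x_1,\dots,x_k\}$, which is finite by the argument above. Choose disjoint neighbourhoods $U_i\ni x_i$ mapped diffeomorphically onto their images. Then set $V=\bigcap_i F(U_i)\setminus F\bigl(X\setminus\bigcup_i U_i\bigr)$. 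This set is open because $F$ is closed, and it is evenly covered by the sets $U_i\cap F^{-1}(V)$.
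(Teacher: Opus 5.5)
Your proposal is correct and follows the paper's route: the paper also gets the covering property from \cite[Theorem 13.23]{ABB} and uses that the number of sheets is constant over the connected target $Y$. Your additions are sound and make explicit what the paper leaves to the citation: surjectivity via the open--closed argument, finiteness of each fiber $F^{-1}(y)$ as a compact discrete set, and the evenly covered neighbourhood $V=\bigcap_i F(U_i)\setminus F\bigl(X\setminus\bigcup_i U_i\bigr)$.
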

\section{Sub-Riemannian geodesics of \texorpdfstring{$\Aa$}{}}\label{sec-geod}
\subsection{Application of PMP}\label{sec: PMP}
This section concerns sub-Riemannian geodesics in $\Aa$. We present first some important facts which set up the operative perspective to apply Theorem \ref{Thm: PMP}.\\
All normal trajectories are locally geodesics (see \cite[Theorem 17.2]{AS-CT}). Further, the contact structure of $\Aa$ excludes the possibility of strictly abnormal trajectories (see \cite[Section 7.3.3]{D-MLG}) and, thanks to \cite[Lemma 4.43]{ABB}, one may show the nonexistence of non-constant general abnormal trajectories in $\Aa$. In what follows we focus on the study of normal trajectories inspired by \cite[Section 4.4.2]{ABB}.

Guided by Theorem \ref{Thm: PMP}, we consider the curve $\gamma(\cdot)=(\lambda(\cdot),t(\cdot),a(\cdot)):[0,T]\to \Aa$ to be a geodesic with associated curve of covectors $\xi(\cdot)=(\xi_1(\cdot),\xi_2(\cdot),\xi_3(\cdot)):[0,T]\to{\rm T}_{\gamma(\cdot)}^*\Aa$ and set up initial data of $\gamma$ and $\xi$ as:
\begin{equation}\label{eq: initial data}
    \gamma(0)={\rm e},\quad \xi(0)=(\eta_0,\tau_0,\xi_0)\in{\rm T}^*_{\rm e}\Aa.
\end{equation}

We recall that $\gamma$ is assumed to be parametrized by arc-length (i.e. $\sqrt{\bar{u_1}(s)^2+\bar{u_2}(s)^2}=1$ a.e. on $[0,T]$). In this manner condition \eqref{eq: Normal extr} implies $h_1^2(\mu(s))+h_2^2(\mu(s)) = 1$ on $[0,T]$. We define the \textit{angular function} $\phi:[0,T]\to\R$ such that 
\begin{equation}\label{eq: phi}
    h_1=\cos \phi (s) \text{ and } h_2=\sin \phi (s), \quad\forall s\in  [0,T]. 
\end{equation}
Substituting \eqref{eq: phi} into \eqref{eq: Hamilton eq} yields 
\begin{equation}\label{eq: Ham AA}
    \begin{aligned}
        \dot{\gamma}(s) &= \cos\phi(s) U_{\gamma(s)} + \sin \phi(s) V_{\gamma(s)},\quad \forall s\in(0,T)\\
    \dot{\xi}(s) &= -(\cos\phi(s)\xi_1(s)+\sin\phi(s)\xi_2(s)){d\xi_1}, \quad\forall s\in(0,T) .
    \end{aligned}
\end{equation}
We rewrite equations \eqref{eq: Ham AA} obtaining the system
\begin{align}\label{eq: ode_system}
\begin{cases}
    \dot \lambda(s) = \lambda(s) \cos \phi(s)\\
    \dot t(s) = \lambda(s) \sin \phi(s)\\
    \dot a(s) = \sin \phi(s)\\
    \dot{\xi_1}(s)=-\cos\phi(s)\xi_1(s)-\sin\phi(s)\xi_2(s)\\
    \dot{\xi_2}(s)=0\\
    \dot{\xi_3}(s)=0
\end{cases}\hspace{-.4cm},\quad s\in(0,T).
\end{align}
Recalling definitions for $h_i$, $i\in\{1,2\}$, given in \eqref{eq: h1, h2 def}, we have 
\begin{equation}\label{eq: h1, h_2 applied}
    h_1=\xi_1\lambda \text{ and } h_2=\xi_3+\xi_2\lambda.
\end{equation}
Thus, condition $h_1^2+h_2^2=1$, combined with the initial data \eqref{eq: initial data}, imposes that $\xi(0)$ satisfies 
\begin{equation*}
    \eta_0^2 + (\xi_0+\tau_0)^2 = 1.
\end{equation*}
We are now in position to state and prove the following theorem describing the sub-Riemannian geodesics of $\Aa$.
\begin{thm}\label{thm: Geodesics of AA}
Let \eqref{eq: initial data} be the initial data, with $\eta_0^2 + (\xi_0+\tau_0)^2 = 1$. The sub-Riemannian geodesics $\gamma:[0,T]\to\Aa$ are given by the following cases:
\begin{enumerate}
    \item if $\tau_0=0$ and $\eta_0=0$, then $\xi_0=\pm 1$ and the geodesic is the straight line $\gamma(s)=(1, \xi_0 s,\xi_0 s)$;
    \item if $\tau_0=0$ and $\eta_0\neq 0$, then $\gamma(s)=\left(e^{\eta_0 s}, \frac{\xi_0}{\eta_0}\left(e^{\eta_0s}-1\right),\xi_0 s\right)$;
    \item if $\tau_0\neq0$, then $\gamma(s)=(\lambda(s),t(s),a(s))$ is given by:
     \begin{equation}
        \begin{aligned}\label{AAGeod3}
        \lambda(s) &= \frac{\sin\phi(s)-\xi_0}{\tau_0},\\
        t(s) &= \frac{\eta_0-\cos\phi(s)}{\tau_0},\\
        a(s) &= \phi(s) - \phi_0 + \xi_0 s,
    \end{aligned}
    \end{equation}
    where $\phi(0)=\phi_0 $ such that $\cos\phi_0 = \eta_0$ and $\sin\phi_0 = \xi_0+\tau_0$, and $\phi:[0,T]\to\R$ is a smooth function which satisfies the o.d.e. $\dot{\phi}(s) = \sin\phi(s) - \xi_0,\,\forall s\in(0,T)$.
    
    Explicitly, \begin{itemize}
    \item if $|\xi_0| < 1$ and $\xi_0\neq 0$, then
    \begin{align}\label{eq: phi 1}
        \phi(s) &= 2\arctan \left( \frac{1}{\xi_0} + \frac{\sqrt{1-\xi_0^2}}{\xi_0} \tanh \left( \frac{(s+c_1)\sqrt{1-\xi_0^2}}{2} \right) \right), 
    \end{align}
  \item if $\xi_0= 0$, then
 \begin{align}\label{eq: phi 2}
     \phi (s) &=2 \arctan \left( \pm e^s c_1' \right),
 \end{align}

    \item if $\xi_0 < -1$, then
    \begin{align}\label{eq: phi 3}
        \phi(s) &= 2\arctan \left( \frac{1}{\xi_0} - \frac{\sqrt{\xi_0^2-1}}{\xi_0} \tan \left( \frac{(s+c_2)\sqrt{\xi_0^2-1}}{2} \right) \right) ,
    \end{align}
    \item if $\xi_0 = -1$, then
    \begin{align}\label{eq: phi 4}
        \phi(s) &= 2\arctan \left( -\frac{2+s+c_3}{s+c_3} \right),
    \end{align}
\end{itemize}
for some constants $c_1, c_1', c_2, c_3 \in \mathbb{R}$ given by
 \begin{align*}
    c_1 &= \frac{2}{\sqrt{1-\xi_0^2}} \arctanh\left( \frac{ \xi_0(\xi_0+\tau_0)-(1+\eta_0) }{ (1+\eta_0)\sqrt{1-\xi_0^2} } \right),\\
    c_1'&=\ln \left| \frac{\xi_0+\tau_0}{1+\eta_0} \right| ,\\
    c_2&=\frac{2}{\sqrt{\xi_0^2-1}} \left( \arctan\left( \frac{ 1+\eta_0-\xi_0(\xi_0+\tau_0) }{ (1+\eta_0)\sqrt{\xi_0^2-1} } \right) \right), \\
    c_3&= -\frac{2(1+\eta_0)} {\eta_0+\tau_0}.
\end{align*}
\end{enumerate}
The cases for $\xi_0 \geq 1$ are recovered by the action of the reflection $j:\Aa\to\Aa$.
\end{thm}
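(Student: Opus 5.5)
The plan is to integrate the Hamiltonian system \eqref{eq: ode_system} explicitly, using that $\xi_2\equiv\tau_0$ and $\xi_3\equiv\xi_0$ are conserved. This works because the system is explicit once the normal condition \eqref{eq: Normal extr} is used. Since abnormal trajectories are excluded, every geodesic is the projection of a normal extremal, so it suffices to solve \eqref{eq: ode_system} with the data \eqref{eq: initial data}.

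First I will derive the equation for $\phi$. By \eqref{eq: h1, h_2 applied} and \eqref{eq: phi} we have $\sin\phi=\xi_0+\tau_0\lambda$ and $\cos\phi=\xi_1\lambda$. Differentiating the first relation and using $\dot\lambda=\lambda\cos\phi$ gives $\cos\phi\,\dot\phi=\tau_0\lambda\cos\phi$. To avoid dividing by $\cos\phi$, I will instead differentiate $h_1=\xi_1\lambda$. Using $\dot\xi_1=-\cos\phi\,\xi_1-\sin\phi\,\tau_0$, this gives $\dot h_1=-\tau_0\lambda\sin\phi$, hence $\dot\phi=\tau_0\lambda=\sin\phi-\xi_0$.

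If $\tau_0=0$, then $\phi$ is constant, with $\cos\phi=\eta_0$ and $\sin\phi=\xi_0$. Integrating $\dot\lambda=\eta_0\lambda$, $\dot t=\xi_0\lambda$ and $\dot a=\xi_0$ from ${\rm e}$ gives cases (1) and (2).

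If $\tau_0\ne0$, then $\lambda=(\sin\phi-\xi_0)/\tau_0$ is immediate. The identity $\dot t=\lambda\sin\phi$ together with $\dot\phi=\tau_0\lambda$ gives $\tau_0\dot t=\sin\phi\,\dot\phi$, so $t=(\eta_0-\cos\phi)/\tau_0$, using $\cos\phi_0=\eta_0$. Finally $\dot a=\sin\phi=\dot\phi+\xi_0$ gives $a=\phi-\phi_0+\xi_0 s$.

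Next I will solve the autonomous equation $\dot\phi=\sin\phi-\xi_0$ by the substitution $u=\tan(\phi/2)$. It becomes the Riccati equation $2\dot u=2u-\xi_0(1+u^2)$, which has constant coefficients. Completing the square gives $2\dot u=-\xi_0\big((u-1/\xi_0)^2-(1-\xi_0^2)/\xi_0^2\big)$, and the discriminant $1-\xi_0^2$ splits the analysis into cases.
\begin{itemize}
\item For $0<|\xi_0|<1$ the solution is a $\tanh$ profile, which gives \eqref{eq: phi 1}.
\item For $\xi_0=0$ the equation is linear, $\dot u=u$, which gives \eqref{eq: phi 2}.
\item For $\xi_0<-1$ the solution is a $\tan$ profile, which gives \eqref{eq: phi 3}.
\item For $\xi_0=-1$ the square is degenerate, so $(u+1)^{-1}$ is affine in $s$, which gives \eqref{eq: phi 4}.
\end{itemize}
In each case the constant is fixed by $u(0)=\tan(\phi_0/2)=\sin\phi_0/(1+\cos\phi_0)=(\xi_0+\tau_0)/(1+\eta_0)$, and solving for the constant produces the stated $c_1,c_1',c_2,c_3$.

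The case $\xi_0\ge1$ follows from the reflection $j$. It is an isometry, and it sends an extremal with covector $(\eta_0,\tau_0,\xi_0)$ to one with $(\eta_0,-\tau_0,-\xi_0)$. This is consistent with $\sin\phi\mapsto-\sin\phi$.

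I expect the main obstacle to be bookkeeping rather than concept. Two issues need care.
\begin{itemize}
\item The half-angle substitution fails when $\phi\equiv\pi \pmod{2\pi}$, that is, when $1+\eta_0=0$. In that case $u(0)$ is infinite, and a continuous branch of $2\arctan$ must be chosen. I would treat it by continuity, or by an equivalent substitution based on $\cot(\phi/2)$.
\item The constant-fixing step in the $\tanh$ case needs the argument of $\arctanh$ to lie in $(-1,1)$. If it does not, the solution is the $\coth$ branch; the stated formula should then be read with the appropriate continuation, or this range should be verified.
\end{itemize}
The degenerate equilibria, where $\sin\phi_0=\xi_0$, force $\tau_0=0$ and so fall outside case (3).
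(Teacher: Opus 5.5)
Your proposal is correct and follows essentially the paper's route: the conserved quantities $\xi_2\equiv\tau_0$, $\xi_3\equiv\xi_0$, the angle equation $\dot\phi=\sin\phi-\xi_0$, $\lambda,t,a$ written in terms of $\phi$, the half-angle substitution split by the sign of $1-\xi_0^2$, and the reflection $j$ for $\xi_0\ge 1$. The differences are cosmetic: you read $\lambda$ directly off $h_2=\sin\phi$ instead of integrating $\dot\lambda/\lambda$, and $\dot\phi=\tau_0\lambda$ should be obtained by combining $\cos\phi\,\dot\phi=\tau_0\lambda\cos\phi$ with $\sin\phi\,\dot\phi=\tau_0\lambda\sin\phi$, since either relation alone still requires a division.

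Your branch caveat is genuine, and the paper's proof passes over it. Since $(1-\xi_0^2)-(\xi_0u_0-1)^2=\xi_0\tau_0(1+u_0^2)$, the $\arctanh$ argument in $c_1$ lies in $(-1,1)$ exactly when $\xi_0\tau_0>0$. So for $\tau_0>0$, $-1<\xi_0<0$ no real $c_1$ exists and the $\coth$ profile is needed. Similarly, for $\xi_0=0$ your computation gives $u=u_0e^{s}=\pm e^{s+c_1'}$, which is how the stated $\pm e^{s}c_1'$ has to be read.
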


\begin{proof}
We begin our proof by looking at system \eqref{eq: ode_system} which we rewrite as follows
\begin{align}\label{eq: ode_system_1}
\begin{cases}
    \dot \lambda(s) = \lambda(s) \cos \phi(s),\\
    \dot t(s) = \lambda(s) \sin \phi(s),\\
    \dot a(s) = \sin \phi(s),\\
    \dot{\xi_1}(s)=-\cos\phi(s)\xi_1(s)-\tau_0\sin\phi(s),\\
    \xi_2(s)=\tau_0,\\
    \xi_3(s)=\xi_0,
\end{cases}\hspace{-.4cm},\quad s\in(0,T).
\end{align}
First we consider the case $\tau_0=0$. The fifth and sixth equations of system \eqref{eq: ode_system_1}, together with definitions \eqref{eq: h1, h_2 applied}, imply that 
\begin{equation}\label{eq: h2}
    h_2(s)=\xi_0 , \quad\forall s\in[0,T].
\end{equation}
If also $\eta_0=0$, the assumption on the initial data $\xi(0)$ yields $\xi_0=\pm 1$. Thus \eqref{eq: phi} imposes that 
\begin{equation*}
    \sin\phi(s)=\xi_0,\,\cos\phi(s)=0, \quad\forall s \in[0,T],
\end{equation*}
and system \eqref{eq: ode_system_1} gives the curve 
\begin{equation*}
    \gamma(s)=(1, \xi_0 s,\xi_0 s),\quad s\in[0,T]. 
\end{equation*}
If $\eta_0\neq0$, the initial data conditions for $\gamma(0)$ and $\xi(0)$ together with \eqref{eq: h2} imply that 
\begin{equation}\label{eq: h1}
    h_1(s)=\eta_0 ,\quad \forall s\in[0,T].
\end{equation}
Thanks to \eqref{eq: phi} and \eqref{eq: h1}, the fourth equation of system \eqref{eq: ode_system_1} becomes the o.d.e.
\begin{equation}\label{eq: ode xi_1}
    \begin{aligned}
  \dot{\xi_1}(s)&=-\eta_0\,\xi_1(s),\quad s\in(0,T),\\
  \xi_1(0)&=\eta_0,
\end{aligned}
\end{equation}
which gives 
\begin{equation*}
    \xi_1(s)=\eta_0 e^{-\eta_0 s},\quad s\in[0,T].
\end{equation*}
The latter expression combined with \eqref{eq: h1, h_2 applied} and \eqref{eq: h1} gives that
\begin{equation}\label{eq: lambda deg}
    \lambda(s)=e^{\eta_0 s},\quad s\in[0,T].
\end{equation}
Using \eqref{eq: phi}, \eqref{eq: h1}, \eqref{eq: h2} and \eqref{eq: lambda deg} we integrate the second and third equations from system \eqref{eq: ode_system_1} and obtain
\begin{equation*}
    \gamma(s)=\left(e^{\eta_0 s},\frac{\xi_0}{\eta_0}(e^{\eta_0s}-1),\xi_0 s\right), \quad s\in[0,T].
\end{equation*}
Now we assume $\tau_0\neq0$. From \eqref{eq: phi}, \eqref{eq: h1, h_2 applied}, the first and the last three equations of system \eqref{eq: ode_system_1} we obtain the o.d.e.
\begin{equation}\label{eq: ode phi}
    \dot{\phi}(s)=\sin\phi(s)-\xi_0,\quad s\in (0,T),
\end{equation} 
where $\phi(0)=\phi_0$ satisfies
\begin{equation*}
    \cos\phi_0=\eta_0 \text{ and } \sin\phi_0=\xi_0+\tau_0.
\end{equation*}
 Thus, integrating $\dot{a}(s) = \sin\phi(s) = \dot{\phi}(s) + \xi_0$, we have $a(s) = \phi(s) - \phi_0 + \xi_0 s$.

For $\lambda(s)$, notice from \eqref{eq: ode_system_1} that:
$$ \frac{\dot{\lambda}(s)}{\lambda(s)} = \cos\phi(s) = \frac{\dot{\phi}(s)\cos\phi(s)}{\sin\phi(s) - \xi_0}. $$
Integrating the latter with respect to $s\in[0,T]$ yields 
 \begin{equation}\label{eq: lambda}
     \lambda(s) = \frac{\sin\phi(s) - \xi_0}{\sin\phi_0 - \xi_0} = \frac{\sin\phi(s) - \xi_0}{\tau_0},\quad s\in[0,T].
 \end{equation}
For $t(s)$, we recall \eqref{eq: h1, h_2 applied} and system \eqref{eq: ode_system_1} and get:
$$ \dot{t}(s) = \lambda(s)\,\sin\phi(s) = \frac{(\sin\phi(s)-\xi_0)\sin\phi(s)}{\tau_0} = \frac{\dot{\phi}(s)\sin\phi(s)}{\tau_0}. $$
Integrating with respect to $s\in[0,T] $ gives $t(s) = \frac{\cos\phi_0 - \cos\phi(s)}{\tau_0}$. To sum up, for $\tau_0\neq0$ the geodesic $\gamma$ is given by
\begin{equation}\label{eq: gamma tau not 0}
    \gamma(s)=\left(\frac{\sin\phi(s) - \xi_0}{\tau_0},\frac{\cos\phi_0 - \cos\phi(s)}{\tau_0},\phi(s) - \phi_0 + \xi_0 s\right),\quad s\in[0,T].
\end{equation}
We are going to solve the o.d.e. \eqref{eq: ode phi} which we rewrite as \begin{equation}\label{eq: angle ode} 
\frac{\dot{\phi}(s)}{\sin\phi(s) - \xi_0} = 1, \quad s \in(0,T).
\end{equation} Using the half-angle substitution $x(s)=\tan\left(\frac{\phi(s)}{2}\right)$ and the associated identities relating the parameter $x$ with $\sin \phi$ and $\cos \phi$, the o.d.e. \eqref{eq: angle ode} becomes:
\begin{equation}\label{eq: phi4}
   -\frac{2}{\xi_0} \frac{\dot{x}(s)}{\left( x(s)-\frac{1}{\xi_0} \right)^2+1-\frac{1}{\xi_0^2}}=1,\quad s\in(0,T).
\end{equation}
Without loss of generality we assume $\tau_0>0$. We observe that the o.d.e. \eqref{eq: ode phi} and \eqref{eq: lambda} show that $\tau_0>0$ is equivalent to $\dot{\phi}(s)>0$, $\forall s\in(0,T)$.
We recall that assumption $\eta_0^2 + (\xi_0 + \tau_0)^2 = 1$ gives $|\xi_0 + \tau_0|\leq 1$ and since $\tau_0 > 0$, we must have $\xi_0 < 1$. Thus, we consider three distinct ranges for $\xi_0$: $\xi_0 \in (-1, 1)$, $\xi_0 < -1$, and $\xi_0 = -1$.
Solving the o.d.e. \eqref{eq: phi4} yields the explicit solutions \eqref{eq: phi 1}, \eqref{eq: phi 2},\eqref{eq: phi 3} and \eqref{eq: phi 4}.

In the same spirit, if we assume $\tau_0<0$, it guarantees $\dot{\phi}(s)<0$, $\forall s\in(0,T)$. Thus, we must have $\xi_0>-1$ splitting into three distinct cases for $\xi_0$: $\xi_0\in (-1,1),\, \xi_0>1$ and $\xi_0=1$. Working similarly as in case $\tau_0>0$, one can find that the solution is the reflection of the trajectory $\gamma(s)$ given by \eqref{eq: gamma tau not 0}, i.e. the curve $j(\gamma(s))=\left(\lambda(s),-t(s),-a(s) \right)$.
\end{proof}
\begin{rem}
It is worth pointing out the continuous dependence of the geodesics with respect to the initial data $\xi(0)=(\eta_0,\tau_0,\xi_0)$. If we consider case (2) and let $\eta_0 \to 0$, we get $\xi_0 \to \pm 1$ and recover the straight lines of case (1). Similarly, in case (3), taking the limit as $\tau_0 \to 0$ gives the o.d.e. \eqref{eq: ode xi_1}. As a consequence, if $\eta_0 \ne 0$ we obtain case (2) and if $\eta_0 = 0$ we find again case (1). 
\end{rem}
\begin{rem}\label{rem: proj}
If $\tau_0 \ne 0$, the projection $\pi(\gamma)\subset\mathbf{H}^1_{\mathbb{C}}$ of the geodesic $\gamma$ is a Euclidean circle centered at $\left(-\frac{\xi_0}{\tau_0}, \frac{\eta_0}{\tau_0}\right)$ with radius $\frac{1}{|\tau_0|}$.
\end{rem}

\subsection{Analysis of cut times}\label{sec: cut-time analysis}
In this section we are going to determine cut times for extremal trajectories of the form $\gamma:\R_+\to\Aa$ with initial data $\gamma(0)={\rm e}$ and $\xi(0)\in{\rm T}^*_{\rm e} \Aa$. We use relations \eqref{eq: phi} and \eqref{eq: h1, h_2 applied} at time $t=0$ to rewrite $\xi(0)$ with respect to the coframe $\{\omega_1,\omega_2,\vartheta\}|_\e$ as follows:
\begin{equation}\label{eq: xi0 coframe}
    \xi(0)=\cos\phi_0\,\omega_1|_\e+\sin\phi_0\,\omega_2|_\e+\xi_0\,\vartheta|_\e.
\end{equation}
This consideration allows us to see $C_\e$ as the cylinder $C_\e\cong\R\times\mathbb{S}^1$ and  identifies initial covectors $\xi(0)$ with $(\xi_0,\phi_0)\in\R\times \mathbb{S}^1$.  According to this notation the exponential map becomes $\exp_\e:\R\times\mathbb{S}^1\times\R_+\to\Aa$ with
\begin{equation}\label{Expe}
    \exp_\e(\xi_0,\phi_0,s)=(\lambda(s),t(s),a(s)), \quad(\xi_0,\phi_0,s)\in\R\times\mathbb{S}^1\times\R_+,
\end{equation}
where the coordinate functions $(\lambda,t,a)$ are given by Theorem \ref{thm: Geodesics of AA}.\\
The next result concerns how to determine Maxwell times following the technique explained in \cite[Section 3.1.1]{Maxstrata}. Before proceeding, we need to define first the concept of period. Let $\xi(0)$ as in \eqref{eq: xi0 coframe} with $\sin\phi_0-\xi_0\neq 0$, the \textit{period} $T_{\xi_0}$ of a solution $\phi(s)$ which satisfies the o.d.e. \eqref{eq: ode phi} as
\begin{equation}\label{eq : Txi0}
    T_{\xi_0}:=\inf\left\{s>0 \, : \, \phi(s)-\phi_0=2k\pi, \quad \text{for some} \,k\in \mathbb{Z}\setminus\{0\}\right\}.
\end{equation}
In other words, we can say that $T_{\xi_0}$ is the first positive time such that $\phi(s)$ completes a full rotation.
If no such time exists, we say that the angular function $\phi$ has infinite period, i.e. $T_{\xi_0}=\inf\emptyset=+\infty$.\\
The period is a natural candidate for the first Maxwell time $T_{\mathrm{Max}}^1$ which also depends on $\xi(0)$. We also observe (see \cite[Section 1.3]{chic}) that definition \eqref{eq : Txi0} is equivalent to the following:
\begin{equation}\label{eq : Txi_02}
    T_{\xi_0}=\int_{0}^{2\pi}\frac{du}{|\sin u-\xi_0|},
\end{equation}
for non-equilibrium solutions, i.e. when $\dot{\phi}(0)\neq 0\iff\sin\phi_0-\xi_0\neq 0$.

In what follows we are going to relate $T_{\xi_0}$ with the Maxwell time $T^1_{\rm Max}$. 
\begin{lem}
    Let $\gamma=\gamma_{\xi(0)}:\R_+\to\Aa$ be an extremal trajectory with initial data $\gamma(0)=\e$ and $(\xi_0,\phi_0)\in\R\times\mathbb{S}^1$. Then $\gamma(T_{\xi_0})$ is a Maxwell point along $\gamma.$
\end{lem}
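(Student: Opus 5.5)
The plan is to compute $\gamma(T_{\xi_0})$ explicitly from Theorem \ref{thm: Geodesics of AA} and observe that it does not depend on $\phi_0$. A whole one-parameter family of extremals therefore meets at the same point at the same time, and I will pick one member of this family that stays distinct from $\gamma$ on $(0,T_{\xi_0})$.

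\textbf{Reductions.} The statement is only meaningful when $T_{\xi_0}<+\infty$.
\begin{itemize}
\item If $\tau_0=0$, then $\phi$ is constant, since $h_1,h_2$ are constant by the proof of Theorem \ref{thm: Geodesics of AA}. The period is infinite and there is nothing to prove.
\item If $\tau_0<0$, the reflection $j$ is an isometry exchanging the cases $\tau_0>0$ and $\tau_0<0$, and it maps extremals from $\e$ to extremals from $\e$. So it suffices to treat $\tau_0>0$.
\item Then $\dot\phi>0$. If $|\xi_0|\le 1$, the o.d.e. $\dot\phi=\sin\phi-\xi_0$ has an equilibrium, which $\phi$ approaches without ever completing a rotation, so $T_{\xi_0}=+\infty$.
\end{itemize}
Hence we may assume $\xi_0<-1$. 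Then $\sin u-\xi_0>0$ for all $u$, and \eqref{eq : Txi_02} gives $T:=T_{\xi_0}=2\pi/\sqrt{\xi_0^2-1}$, with $\phi(T)=\phi_0+2\pi$.

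\textbf{Endpoint computation.} Substituting $\phi(T)=\phi_0+2\pi$ into \eqref{eq: gamma tau not 0}, and using $\tau_0=\sin\phi_0-\xi_0$, gives
\[
\gamma(T)=\Bigl(1,\,0,\,2\pi+\xi_0 T\Bigr).
\]
This depends only on $\xi_0$, not on $\phi_0\in\mathbb{S}^1$. Moreover $T$ depends only on $\xi_0$. So every extremal $\gamma_{(\xi_0,\phi_0')}$, for any $\phi_0'\in\mathbb{S}^1$, satisfies $\gamma_{(\xi_0,\phi_0')}(T)=\gamma(T)$. Note that $\tau_0'=\sin\phi_0'-\xi_0>0$ automatically, because $\xi_0<-1$.

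\textbf{Choice of the second extremal.} I would take $\phi_0'=\pi-\phi_0$. This gives the same $\tau_0$ and $\eta_0'=-\eta_0$, so the projected circles of Remark \ref{rem: proj} have the same radius $1/\tau_0$ and centres mirrored across the line $\{t=0\}$. If $\cos\phi_0=0$ the two circles coincide, and I would use instead a generic $\phi_0'$, for example $\phi_0+\pi/2$.

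\textbf{Main obstacle: $\tilde\gamma(s)\ne\gamma(s)$ for all $s\in(0,T)$.} Write $\tilde\phi$ for the angular function of $\tilde\gamma=\gamma_{(\xi_0,\phi_0')}$. Suppose $\gamma(s)=\tilde\gamma(s)$ for some $s\in(0,T)$. From \eqref{AAGeod3}:
\begin{itemize}
\item equality of $\lambda$ gives $\sin\phi(s)=\sin\tilde\phi(s)$;
\item equality of $a$ gives $\phi(s)-\tilde\phi(s)=\phi_0-\phi_0'$;
\item equality of $t$ gives $\cos\phi_0-\cos\phi(s)=\cos\phi_0'-\cos\tilde\phi(s)$.
\end{itemize}
The first two conditions force either $\phi(s)-\tilde\phi(s)\in 2\pi\mathbb{Z}$, which is impossible unless $\phi_0'\equiv\phi_0$, or $\phi(s)+\tilde\phi(s)\equiv\pi \pmod{2\pi}$.

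In the second case I would use the symmetry of the o.d.e. under $u\mapsto\pi-u$ combined with time reversal: $\tilde\phi(s)=\pi-\phi(-s)$, up to the $2\pi$ normalisation. Then $\phi(s)+\tilde\phi(s)\equiv\pi$ reduces to $\phi(s)\equiv\phi(-s)$. Since $\phi$ is strictly increasing with period $T$ (it gains exactly $2\pi$ over each interval of length $T$), this forces $2s\in T\mathbb{Z}$, that is $s=T/2$. At $s=T/2$ I would check the $t$-equation directly, and expect it to fail unless $\cos\phi_0=0$, which is the excluded case. Making this last step airtight, including the degenerate case where a generic $\phi_0'$ must be used, is where I expect the real work to lie.
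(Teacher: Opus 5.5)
Your route is essentially the paper's. The paper also computes $\gamma(T_{\xi_0})=(1,0,2\pi+\xi_0T_{\xi_0})$ as in \eqref{qpoint}. Its second extremal $\tilde\gamma(s)=j\bigl(\gamma(T_{\xi_0})^{-1}\cdot\gamma(T_{\xi_0}-s)\bigr)$ has initial covector $(\xi_0,\pi-\phi(T_{\xi_0}))=(\xi_0,\pi-\phi_0-2\pi)$, which is exactly your $\phi_0'=\pi-\phi_0$. The paper then only asserts $\tilde\gamma\not\equiv\gamma$.

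Your observation that $\gamma(T_{\xi_0})$ does not depend on $\phi_0$ is sharper, and it repairs a flaw in the paper's argument. When $\cos\phi_0=0$ one has $\pi-\phi_0\equiv\phi_0$, so the paper's $\tilde\gamma$ is $\gamma$ itself. By contrast, any $\phi_0'\not\equiv\phi_0$ gives a genuinely different extremal reaching the same point at the same time. If ``another extremal'' only means $\tilde\gamma\not\equiv\gamma$, which is all the paper checks, you are done at that point.

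You set out to prove the stronger property $\tilde\gamma(s)\neq\gamma(s)$ for all $s\in(0,T)$. The step you left open closes at once if you keep the $a$-equation in your second case instead of passing to the time-reversal symmetry. With $\phi_0'=\pi-\phi_0$ (same $\tau_0$), a coincidence at $s\in(0,T)$ gives $\sin\phi(s)=\sin\tilde\phi(s)$ and $\phi(s)-\tilde\phi(s)=2\phi_0-\pi$.
If $\phi(s)-\tilde\phi(s)\in 2\pi\mathbb{Z}$, then $\cos\phi_0=0$.
If instead $\phi(s)+\tilde\phi(s)\in\pi+2\pi\mathbb{Z}$, adding the two relations gives $\phi(s)\in\phi_0+\pi\mathbb{Z}$. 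Since $\phi(s)\in(\phi_0,\phi_0+2\pi)$, this forces $\phi(s)=\phi_0+\pi$ and hence $\tilde\phi(s)=2\pi-\phi_0$. The $t$-equation then becomes
\[
\cos\phi_0-\cos(\phi_0+\pi)=\cos(\pi-\phi_0)-\cos(2\pi-\phi_0),\qquad\text{i.e.}\qquad 2\cos\phi_0=-2\cos\phi_0,
\]
so again $\cos\phi_0=0$. Along your route you do reach $s=T/2$, but there the $t$-equation alone only says $\cos\phi(T/2)=\cos\phi_0$; it is the $a$-equation that pins down $\phi(T/2)=\phi_0+\pi$.

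For the fallback when $\cos\phi_0=0$, you have $\tau_0'\neq\tau_0$ and this algebra becomes unpleasant, so argue on the projected curves $c:=\pi\circ\gamma$ and $\tilde c:=\pi\circ\tilde\gamma$ in $\bH^1_\C$ instead. By Proposition \ref{mom}, both are unit-speed curves of constant geodesic curvature $-\xi_0$ starting at $(1,0)$, with initial directions differing by $\alpha=\phi_0'-\phi_0$. Let $\rho_\alpha$ be the hyperbolic rotation by $\alpha$ about $(1,0)$. By uniqueness of curves with prescribed initial point, unit tangent and constant curvature, $\tilde c=\rho_\alpha\circ c$.

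A coincidence $\gamma(s)=\tilde\gamma(s)$ would then make $c(s)$ a fixed point of $\rho_\alpha$. That means $\lambda(s)=1$ and $t(s)=0$, i.e.\ $\phi(s)\in\phi_0+2\pi\mathbb{Z}$ as in Lemma \ref{Lem: WD}, which is impossible for $s\in(0,T)$. This works for every $\phi_0'\not\equiv\phi_0$, so it makes both the special choice and the case split unnecessary.
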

\begin{proof}
    Note that for $|\xi_0|>1$, $T_{\xi_0}$ is the time for \eqref{eq : Txi0} to advance $2\pi$ i.e. $\phi (T_{\xi_0})=\phi_0+2\pi.$ Using \eqref{AAGeod3} we observe 
    \begin{equation}\label{qpoint}
    \gamma(T_{\xi_0})=(1,0,2\pi+\xi_0T_{\xi_0}).
    \end{equation}
    Consider a second extremal trajectory $\tilde{\gamma}(s)=j(\gamma(T_{\xi_0})^{-1}\cdot\gamma(T_{\xi_0}-s))$ from $\e$ with initial covector $(\xi_0,\pi-\phi(T_{\xi_0})).$ Using \eqref{AAGeod3}, one can see that $\gamma(s)$ is not identically equal to $\tilde{\gamma}(s)$, but $\gamma(T_{\xi_0})=\tilde{\gamma}(T_{\xi_0}).$ Hence $\gamma (T_{\xi_0})$ is a Maxwell point. 
\end{proof}
As a consequence of the previous result, $T_{\xi_0}$ provides an upper bound for the first Maxwell time:
\begin{equation}\label{MAXbound}
T_{\mathrm{Max}}^1\leq T_{\xi_0}.
\end{equation}

\begin{prop}\label{lem: ang period}
    Let $\gamma=\gamma_{\xi(0)}:\R_+\to\Aa$ be an extremal trajectory with initial data $\gamma(0)=\e$ and $(\xi_0,\phi_0)\in\R \times\mathbb{S}^1$. The period of the angular function is
    \begin{equation}
    T_{\xi_0}=
        \begin{cases}
          \frac{2\pi}{\sqrt{\xi_0^2-1}}, \quad \text{if } |\xi_0|>1,\\
          +\infty, \quad \text{if } -1\leq \xi_0\leq1,
        \end{cases}\label{period}
    \end{equation}  
    Moreover, $T^1_{\rm Max}=T_{\xi_0}$.
\end{prop}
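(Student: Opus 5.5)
The statement has two parts: the computation of $T_{\xi_0}$, and the identity $T^1_{\rm Max}=T_{\xi_0}$. I will treat them separately, beginning with the period via the integral formula \eqref{eq : Txi_02}.

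\emph{Period.} For $|\xi_0|>1$ the integrand $1/|\sin u-\xi_0|$ is smooth and $2\pi$-periodic. I would evaluate
\[
\int_0^{2\pi}\frac{du}{|\xi_0|-\operatorname{sign}(\xi_0)\sin u}
\]
by the half-angle substitution $x=\tan(u/2)$, as in \eqref{eq: phi4}, or by residues. This gives the classical value $2\pi/\sqrt{\xi_0^2-1}$. As a check, the explicit solution \eqref{eq: phi 3} has $\tan$-argument advancing by $\pi$ after time $2\pi/\sqrt{\xi_0^2-1}$, which corresponds to $\phi$ advancing by $2\pi$.

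For $-1\le\xi_0\le 1$, the o.d.e. \eqref{eq: ode phi} has equilibria wherever $\sin\phi=\xi_0$. A non-equilibrium solution is trapped between two consecutive equilibria, by uniqueness of solutions, so $\phi-\phi_0$ never reaches $\pm2\pi$. The same conclusion can be read from \eqref{eq: phi 1}, \eqref{eq: phi 2} and \eqref{eq: phi 4}, which are bounded monotone functions with finite limits. Equilibrium solutions are constant. In every case the set in \eqref{eq : Txi0} is empty, so $T_{\xi_0}=+\infty$. For $\xi_0\ge1$ the cases follow from the reflection $j$, as stated in Theorem \ref{thm: Geodesics of AA}.

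\emph{Maxwell time.} The preceding lemma gives \eqref{MAXbound}, $T^1_{\rm Max}\le T_{\xi_0}$. It remains to show that no Maxwell point occurs at any time $T<T_{\xi_0}$. I would argue by contradiction. Suppose $\gamma(T)=\tilde\gamma(T)$ for distinct initial data $(\xi_0,\phi_0)\neq(\tilde\xi_0,\tilde\phi_0)$.

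\textbf{Step 1: the projections.} By Remark \ref{rem: proj}, $\pi(\gamma)$ and $\pi(\tilde\gamma)$ are Euclidean circles through $(1,0)$ (in the cases $\tau_0\ne0$). Since $\gamma$ is horizontal, the $a$-coordinate is $a(T)=\int_{\pi(\gamma)}dt/\lambda$. By Stokes' theorem, the difference $a(T)-\tilde a(T)$ equals the integral of $d\lambda\wedge dt/\lambda^2$ over the region enclosed by the two projected arcs. Equal endpoints therefore means the two arcs have the same end point in $\bH^1_\C$ and enclose zero signed hyperbolic area, counted with winding.

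\textbf{Step 2: exploit sub-full rotation.} For $T<T_{\xi_0}$, each arc $\pi(\gamma|_{[0,T]})$ covers strictly less than the full circle. So both arcs are simple arcs joining the same two points of $\bH^1_\C$, and each is described by a monotone angle. The two arcs, with $\phi$ monotone on each, either coincide or bound a region of nonzero signed area. The delicate configuration is when the arcs lie on opposite sides of the chord, so that their areas could cancel. I would rule this out using the relation $a(s)=\phi(s)-\phi_0+\xi_0 s$ together with the sign of $\tau_0$, which fixes the orientation of each arc (clockwise versus counterclockwise).

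I expect Step 2 to be the main obstacle: showing injectivity of $\exp_\e$ on $\{s<T_{\xi_0}\}$ from these explicit formulas. If the direct geometric argument fails, I would instead establish the lower bound $T^1_{\rm Max}\ge T_{\xi_0}$ through the Hadamard route of Theorem \ref{Hadamard}. That means restricting $\exp_\e$ to $\{(\xi_0,\phi_0,s):0<s<T_{\xi_0}\}$, proving the Jacobian is nonzero there and that the map is proper, and then showing the covering is one-sheeted by a fundamental-group count. Injectivity, and hence the absence of Maxwell points before $T_{\xi_0}$, would follow.
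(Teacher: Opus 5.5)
Your computation of $T_{\xi_0}$ is correct and is the paper's argument: the half-angle evaluation of \eqref{eq : Txi_02} for $|\xi_0|>1$, and trapping between consecutive equilibria of \eqref{eq: ode phi} by uniqueness for $|\xi_0|\le1$. Note that the paper's proof of this proposition stops there. The equality $T^1_{\rm Max}=T_{\xi_0}$ is effectively supplied only later, by the Hadamard argument of Theorem \ref{Thm: cut-time}, which is your fallback.

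The gap is in your own route to $T^1_{\rm Max}\ge T_{\xi_0}$. Step 2 asserts that \emph{both} projected arcs cover less than a full circle, but $T<T_{\xi_0}$ gives this only for $\gamma$. The competitor $\tilde\gamma$ may have $T\ge T_{\tilde\xi_0}$ and wind around its circle. The loop $\pi\gamma\ast\overline{\pi\tilde\gamma}$ is then no longer a Jordan curve, and the area comparison yields no contradiction.

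Conversely, the configuration you call delicate is harmless. By Remark \ref{rem: proj} and cases (1)--(2) of Theorem \ref{thm: Geodesics of AA}, every projection is an arc of a Euclidean circle or line through $(1,0)$, and two distinct such curves meet in at most two points. Hence two sub-full-turn arcs from $(1,0)$ to $\pi(q)\neq(1,0)$ lying on distinct curves form a Jordan curve bounding some $\Omega\subset\bH^1_\C$. Stokes then gives $a(T)-\tilde a(T)=\mp A_{\rm hyp}(\Omega)\neq0$: arcs on opposite sides of the chord add their areas, they do not cancel. On the same curve, equal orientation forces equal $(\xi_0,\phi_0)$, and opposite orientations can only meet again by closing up the whole circle.

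To close the gap, compare $\gamma$ with a \emph{minimizer} rather than an arbitrary competitor.
\begin{itemize}
\item Fix $0<T<T_{\xi_0}$ and $q=\gamma(T)$, so $\pi(q)\neq(1,0)$ as in Lemma \ref{Lem: WD}. Let $\tilde\gamma=\exp_\e(\tilde\xi_0,\tilde\phi_0,\cdot)$ be a minimizer to $q$ of length $d\le T$; it is normal since there are no nontrivial abnormals.
\item You get $d\le T_{\tilde\xi_0}$ because $\exp_\e(\tilde\xi_0,\phi_0',T_{\tilde\xi_0})$ does not depend on $\phi_0'\in\mathbb S^1$. If $d>T_{\tilde\xi_0}$, splicing in another $\phi_0'$ would produce a minimizer with a corner. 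This independence is also the robust way to get \eqref{MAXbound}: the partner $(\xi_0,\pi-\phi_0)$ used in the paper's lemma coincides with $\gamma$ when $\phi_0\equiv\pm\pi/2$.
\item Moreover $d\neq T_{\tilde\xi_0}$, since $\pi(q)\neq(1,0)$. Both arcs are therefore sub-full-turn, and the Jordan-curve argument gives $\tilde\gamma=\gamma$ and $d=T$.
\item So $\gamma|_{[0,T]}$ is the unique minimizer to $q$. Any other extremal reaching $q$ at time $T$ would be a second minimizer, so no Maxwell point occurs before $T_{\xi_0}$.
\end{itemize}
Completed this way, your route is genuinely more elementary than the paper's. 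It needs neither Lemma \ref{lem: J}, nor Proposition \ref{conj}, nor properness and covering theory. If you keep the Hadamard fallback instead, you still need this minimizer step, because injectivity of $F$ on $X$ only excludes competitors with $T<T_{\tilde\xi_0}$.
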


\begin{proof}
Without loss of generality, we assume $\sin\phi_0-\xi_0>0$. Indeed, one may recover the case $\sin\phi_0-\xi_0<0$ by means of the reflection $j$. We consider the following distinction:\\
\noindent{} \textit{Case I:} If $\xi_0<-1$, by the proof of Theorem \ref{thm: Geodesics of AA}, the following conditions are satisfied:
    $$\dot{\phi}(s)=\sin\phi(s)-\xi_0, \qquad \dot{\phi}(0)=\tau_0.$$

Since $\sin\phi(s)-\xi_0\geq-1-\xi_0>0$, we find that $\dot{\phi}(s)>0$, for all $s>0$. Therefore, $\phi(s)$ is strictly-increasing. Using \eqref{eq : Txi_02} and the half-angle substitution $z=\tan\frac{u}{2}$, we calculate:
$$T_{\xi_0}=\frac{2\pi}{\sqrt{\xi_0^2-1}}.$$

\noindent{} \textit{Case II:} If $-1\leq \xi_0\leq1$, the equation $\sin\phi=\xi_0$, has a solution $\phi^*$ between the interval $(\phi_0, \phi_0+2\pi)$. Let $S$ be the time such that $\phi(S)=\phi_0+2\pi$. Due to continuity of $\phi(s)$, there is an $s^*\in (0,S)$, such that $\phi(s^*)=\phi^*$. Hence, it is $\dot{\phi}(s^*)=0$.

In addition, the constant function $\phi(s)=\phi^*$ is also a solution of the o.d.e. \eqref{eq: angle ode}. Therefore, the uniqueness of solutions implies that they must coincide on their common interval of definition, which leads to a contradiction, since $\dot{\phi}(0)=\tau_0>0$. Hence, $\phi(s)$ cannot reach $\phi^*$ in finite time. Consequently,
$$\phi_0<\phi(s)<\phi^*<\phi_0+2\pi, $$
for every finite $s>0$. Then, $\phi$ cannot complete a full rotation in finite time and combining with \eqref{eq : Txi0}, it is $T_{\xi_0}=+\infty$.

Finally, if $\sin\phi_0-\xi_0=0$, then the o.d.e. \eqref{eq: ode phi} reduces to $\dot{\phi}(s)=0$. Thus $\phi(s)\equiv \phi_0$ is a solution. Then, again by \eqref{eq : Txi0}, we obtain $T_{\xi_0}=+\infty$.

\end{proof}

The next result concerns the Maxwell set, we focus on the Maxwell points obtained by applying the symmetries of $\Aa$ on the extremal trajectories. This procedure is inspired by \cite[Section 3.2]{Maxstrata}.
\begin{prop}\label{Max1}
Let $\gamma=\gamma_{\xi(0)}:\R_+\to\Aa$ be an extremal trajectory with initial data $\gamma(0)=\e$ and $(\xi_0,\phi_0)\in\R\times\mathbb{S}^1$.
Then the set
$$\mathcal{L}=\left\{(1,0,a)\in \Aa \, : \, a\neq0 \right\}$$
corresponds to the Maxwell set ${\rm Max}$.
\end{prop}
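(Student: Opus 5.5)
We will prove the two inclusions ${\rm Max}\subseteq\mathcal{L}$ and $\mathcal{L}\subseteq{\rm Max}$. The symmetry underlying everything is the one already used in the preceding Lemma. For an extremal trajectory $\gamma$ and a time $T>0$, set
$$\tilde\gamma(s)=j\big(\gamma(T)^{-1}\cdot\gamma(T-s)\big).$$
This is an extremal trajectory from $\e$, because left translations and $j$ are isometries. The pair $(j\circ L_{\gamma(T)^{-1}},\ \text{time reversal})$ is a symmetry in the sense of \eqref{eq: symmetry}. It sends $\gamma(T)$ to $j(\gamma(T)^{-1})$. The fixed points of $p\mapsto j(p^{-1})$ are exactly the points with $(1/\lambda,t/\lambda,a)=(\lambda,t,a)$, that is, $\lambda=1$ and $t=0$. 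This is precisely the line $\{(1,0,a)\}$ of which $\mathcal{L}$ is the punctured part.

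\emph{Step 1: $\mathcal{L}\subseteq{\rm Max}$.} Fix $a\neq0$. We will produce extremals reaching $(1,0,a)$ at a common time and check that they are distinct.

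By \eqref{AAGeod3}, for $\tau_0\ne0$ we have $\gamma(T)\in\{\lambda=1,t=0\}$ if and only if $\sin\phi(T)=\sin\phi_0$ and $\cos\phi(T)=\cos\phi_0$. Since $\phi$ is monotone, this can only mean $\phi(T)=\phi_0+2\pi k$.

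For $|\xi_0|>1$, Proposition \ref{lem: ang period} gives $\gamma(T_{\xi_0})=(1,0,2\pi+\xi_0T_{\xi_0})$ by \eqref{qpoint}. Here
$$g(\xi_0):=2\pi+\frac{2\pi\xi_0}{\sqrt{\xi_0^2-1}}$$
for $\xi_0<-1$, together with its reflection under $j$ for $\xi_0>1$. We will check that $g$ maps $(-\infty,-1)$ onto $(-\infty,0)$ monotonically. Indeed, $g\to-\infty$ as $\xi_0\to-1^-$ and $g\to0^-$ as $\xi_0\to-\infty$. Hence every $a<0$ is attained, and every $a>0$ is attained after applying $j$.

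For such $\xi_0$, the whole one-parameter family $\phi_0\in\mathbb{S}^1$ reaches the same point $(1,0,a)$ at the same time $T_{\xi_0}$, because the endpoint does not depend on $\phi_0$. Distinct values of $\phi_0$ give distinct trajectories, since their projections are distinct circles by Remark \ref{rem: proj}. These trajectories do not meet before time $T_{\xi_0}$: on $(0,T_{\xi_0})$ the projected circles pass through the origin and meet only there, at times $0$ and $T_{\xi_0}$. This shows that each point of $\mathcal{L}$ is a Maxwell point.

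\emph{Step 2: ${\rm Max}\subseteq\mathcal{L}$.} This is the harder direction and the main obstacle. Suppose two distinct unit-speed extremals $\gamma\neq\tilde\gamma$ meet at $p=\gamma(T)=\tilde\gamma(T)$.

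We first project to $\bH^1_\C$. By \eqref{lhyphor}, both projections are curves of hyperbolic length $T$ from $(1,0)$ to $\pi(p)$. By Theorem \ref{thm: Geodesics of AA} and Remark \ref{rem: proj}, the projections are arcs of Euclidean circles, vertical lines, or lines through the base point. These are hyperbolic geodesics, horocycles or hypercycles through $(1,0)$.

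We then use the third coordinate. For a horizontal curve, $a(T)=\int dt/\lambda$. By Stokes' theorem, the difference $a(T)-\tilde a(T)$ equals the hyperbolic-type integral $\iint d\lambda\,dt/\lambda^2$ over the region enclosed between the two projections, taken with orientation. So equality of the $a$-coordinates forces the projections to cross in a balanced way.

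The plan is to show that two distinct arcs of circles of this type, with the same length and the same endpoints, must coincide unless the common endpoint is $(1,0)$, that is, unless both arcs close up. The tool is a rigidity argument: two circles meet in at most two points. Equal length $T$ together with the same pair of endpoints $(1,0)$ and $q$ determines the circle up to the reflection across the hyperbolic geodesic through the two points. We then check, via the area formula, that the reflected pair has different $a$-values unless $q=(1,0)$.

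Two degenerate situations need separate treatment. First, for $|\xi_0|\le1$, Proposition \ref{lem: ang period} gives infinite period, so these arcs never close. Second, the point $a=0$ is excluded because it is $\e$ itself, reached only at $T=0$. Finally, the closed case forces $\phi(T)=\phi_0+2\pi k$, so $p=(1,0,a)$ with $a\neq0$, which places $p$ in $\mathcal{L}$.
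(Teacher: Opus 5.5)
Your Step 1 is essentially the paper's argument: $2\pi+\xi_0T_{\xi_0}$ ranges over $(-\infty,0)$ for $\xi_0<-1$, and $j$ covers $a>0$. Using the family $\phi_0\in\mathbb{S}^1$ as competitors instead of the paper's curve $j(\gamma(T)^{-1}\cdot\gamma(T-\cdot))$ is fine.

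There is one slip. The projected circles pass through $(1,0)$, not the origin, and two distinct ones in general meet at a second point. What you actually need is that they never reach that point at the same time. This holds because reflection in the geodesic through the two intersection points swaps the circles and reverses orientation. Hence the two arc lengths to the second point add up to the circumference $2\pi\sinh R=T_{\xi_0}$, and they are equal only if the circles coincide.

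The genuine gap is in Step 2. You claim that length $T$ and endpoints $(1,0)$, $q$ determine the projected arc up to reflection in the geodesic through them. This is only valid for arcs that do not complete a full turn. Even there, ``two circles meet in at most two points'' only gives you a lens; you still need that non-crossing arcs on the same side of the chord have different lengths, e.g.\ by monotonicity of perimeter for nested convex sets.

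The definition of a Maxwell point, however, allows any $T>0$. For $|\xi_0|>1$ and $T>T_{\xi_0}$ the projection winds around its hyperbolic circle. Consider circles through $(1,0)$ and $q$: a left-turning arc with one extra turn has length $2\pi\sinh R+(\text{arc})$. This tends to $+\infty$ as the centre runs to either end of the perpendicular bisector of $(1,0)$ and $q$. So every large $T$ is realised by at least two such arcs, in addition to the non-winding one, and these are not mirror images of each other. All of them are projections of extremals from $\e$.

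Your Stokes argument says nothing about whether the signed areas of these arcs, and hence their $a$-coordinates, can coincide. So ${\rm Max}\subseteq\mathcal{L}$ is not established. To close it along your route you would need a real comparison of enclosed areas across different radii and winding numbers, for instance showing that, at fixed length, arcs in different winding classes enclose areas in disjoint ranges.

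The paper does not attempt such a classification. It takes the three symmetries $\sigma_1=j$, $\sigma_2(p)=p^{-1}$, $\sigma_3(p)=j(p^{-1})$ with the corresponding $\varepsilon_i$. It then shows that the only admissible fixed points lie on $\{\lambda=1,t=0\}$; this is the fixed-point computation you did for $\sigma_3$ in your first paragraph. So the paper's argument gives the inclusion only for Maxwell points generated by these symmetries; it does not confront the winding issue either.
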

\begin{proof}
     For $\xi_0<-1$, the Maxwell point obtained by a first full rotation, is of the form \eqref{qpoint}. We set $a_{-}(\xi_0):=2\pi+\xi_0\, T_{\xi_0}=2\pi\left(1+\frac{\xi_0}{\sqrt{\xi_0^2-1}} \right)$. Since $$\lim_{\xi_0\to -\infty}\left(a_{-}(\xi_0)\right)=0, \qquad\lim_{\xi_0\to -1^-}\left(a_{-}(\xi_0)\right)=-\infty,$$
    the range of $a_{-}(\xi_0)$ is $(-\infty,0)$. We work similarly for $\xi_0>1$. Thus the inclusion $\mathcal{L}\subset \mathrm{Max}$ follows.

    Now we proceed proving $ \mathrm{Max}\subset \mathcal{L}$. Let $T>0$ and let $(\xi_0,\phi_0,T)\in C_\e\times \R_+$ such that
\begin{equation}\label{eq: gamma Max}
    \gamma(s)=\exp_{\e}(\xi_0,\phi_0,s)=\left(\lambda(s),t(s),a(s)\right), \quad s\in [0,T],
\end{equation}    
    is the solution from Theorem \ref{thm: Geodesics of AA}, where $\hat{p}=\gamma(T)$ denotes its endpoint and $\phi(s)$ is the corresponding angular solution. 
    In what follows we construct pairs $(\varepsilon_i,\sigma_i), \, i\in\{1,2,3\}$ that are non-trivial symmetries with the purpose of determining $\rm Max$.\\
    We define the mappings $\varepsilon_i : C_{\e}\times \R_+\to C_\e\times \R_+, \, i\in\{1,2,3\},$
    given by
    \begin{align}
        \varepsilon_1(\xi_0,\phi_0,T)&=(-\xi_0,-\phi_0,T),\label{e1}\\ \varepsilon_2(\xi_0,\phi_0,T)&=(-\xi_0,\phi(T)+\pi,T),\quad (\xi_0,\phi_0,T)\in C_\e\times\R_+. \notag\\ \varepsilon_3(\xi_0,\phi_0,T)&=(\xi_0,\pi-\phi(T),T),\notag
    \end{align}
Further, the mappings $\sigma_i : \Aa\to \Aa, \, i\in\{1,2,3\},$ are given by $$\sigma_1(p)=j(p), \quad \sigma_2(p)=p^{-1}, \quad \sigma_3(p)=j(p^{-1}),\quad p\in\Aa.  $$
Recalling the curve $\gamma$ given in \eqref{eq: gamma Max}, one can verify that the curves
$$\gamma_1(\cdot)=j(\gamma(\cdot)), \quad \gamma_2(\cdot)=\gamma(T)^{-1}\cdot \gamma(T-\cdot), \quad \gamma_3(\cdot)=j(\gamma_2(\cdot)),$$
correspond to the initial covectors $(-\xi_0,-\phi_0), \, (-\xi_0,\phi(T)+\pi), \, (\xi_0,\pi-\phi(T))$, respectively. Therefore, by the uniqueness of the solutions, we write
\begin{align*}
    \gamma_1(s)&=\exp_\e(-\xi_0,-\phi_0,s)\\
    \gamma_2(s)&=\exp_\e(-\xi_0,\phi(T)+\pi,s)\\
    \gamma_3(s)&=\exp_\e(\xi_0,\pi-\phi(T),s).
\end{align*}
Computing at $s=T>0$, we obtain
$\gamma_i(T)=\sigma_i(\gamma(T))$ for $i\in\{1,2,3\}$. In other words,
$$\exp_\e\left(\varepsilon_i(\xi_0,\phi_0,T)\right)=\sigma_i\left(\exp_\e(\xi_0,\phi_0,T)\right), \quad i\in\{1,2,3\}.$$
Since $T>0$ was assumed arbitrary, the pairs $(\varepsilon_i,\sigma_i), \, i\in\{1,2,3\}$ satisfy the symmetry definition (\ref{eq: symmetry}). 

Now, for some $T_0>0$, a Maxwell point $p_0=\gamma(T_0)$ is generated by $(\varepsilon_i,\sigma_i), \, i\in\{1,2,3\}$ and it must satisfy 
\begin{equation*}\label{max123}
\varepsilon_i(\xi_0,\phi_0,T_0)\neq (\xi_0,\phi_0,T_0) \quad \text{and} \quad \sigma_i(p_0)=p_0,\;  \forall i\in\{1,2,3\}.
\end{equation*}
We study each case separately:

    \noindent{} \textit{Case I:} We have
    $$\gamma_1(T_0)=\gamma(T_0)\iff (\lambda(T_0),-t(T_0),-a(T_0))=(\lambda(T_0),t(T_0),a(T_0))\iff t(T_0)=a(T_0)=0,$$
which, after combining the last fact with the second and the third o.d.e of system (\ref{eq: ode_system}), leads to $\sin\phi(s)\equiv 0,$ for all $s\in (0,T_0)$. Therefore, $\phi(s)\equiv 0$ or $\phi(s)\equiv \pi$ for all $s\in (0,T_0)$, which yields $\gamma_1(s)=\gamma(s)=(e^{\pm s},0,0)$, for all $s\in (0,T_0)$, thus contradicting the definition of a Maxwell point. 

\noindent{} \textit{Case II:} It holds
$$\gamma_2(T_0)=\gamma(T_0)\iff (\lambda(T_0)^{-1},-t(T_0)\cdot\lambda(T_0)^{-1},-a(T_0))=(\lambda(T_0),t(T_0),a(T_0)).$$
Thus $\lambda(T_0)=1, \, t(T_0)=0, \, a(T_0)=0$. Following the same argument of the first case, we have $\gamma(s)=(e^{\pm s},0,0)$, which leads to a contradiction since $\gamma(T_0)=(e^{\pm T_0},0,0)\neq (1,0,0)$.

\noindent{} \textit{Case III:} For the third symmetry, we have 
$$\gamma_3(T_0)=\gamma(T_0)\iff (\lambda(T_0)^{-1},t(T_0)\cdot \lambda(T_0)^{-1},a(T_0))=(\lambda(T_0),t(T_0),a(T_0)).$$
The last condition holds if and only if $\lambda(T_0)=1$. Assuming that $\sin\phi_0-\xi_0\neq 0$, by the expression of $\lambda(s)$ in (\ref{AAGeod3}), we obtain
$$\phi(T_0)=\phi_0+2k\pi \quad \text{or} \quad\phi(T_0)=\pi-\phi_0+2k\pi, \quad k\in\mathbb{Z}.$$
If $k=0$ we do not get any Maxwell points. If $k\neq0$ we have $\gamma_3(s)=\gamma(s)$ for all $s\in (0,T_0).$ Then $\phi(T_0)=\phi_0+2k\pi, \quad k\in \mathbb{Z}\setminus\{0\}$. Therefore $T_0=2k \,T_{\xi_0}=\frac{2k\pi}{\sqrt{\xi_0^2-1}}>0, \, k\in\mathbb{Z}\setminus\{0\}$ is equivalent to $ |\xi_0|>1$, which produces Maxwell points of the form $(1,0,a), \, a\neq0$, i.e. $(1,0,a)\in\mathcal{L}$.\\ 
If $\sin\phi_0-\xi_0=0$, using the solutions of Theorem \ref{thm: Geodesics of AA} again, we do not have Maxwell points.

Combining Cases $I-III$, we find that the non-trivial symmetries $(\varepsilon_i,\sigma_i), \, i\in\{1,2,3\}$ generate Maxwell points on $\mathcal{L}$. In other words, $\mathrm{Max}\subset \mathcal{L}$, which completes the proof.
\end{proof}
Before proceeding with the next result we collect some useful computations in the following 
\begin{lem}\label{lem: J}
    Let $\exp_{\e}:C_\e\times\R_+\to\Aa$ be represented in $(\xi_0,\phi_0,s)$-coordinates as in \eqref{Expe}. If $\sin\phi_0-\xi_0\neq0$, then the Jacobian determinant $J$ of $
D\,\exp_\e\big|_{(\xi_0,\phi_0,s)}:{\rm T}_{(\xi_0,\phi_0,s)}C_\e\times \R_+\to{\rm T}_{\gamma(s)}\Aa
$, computed with respect to the bases $\left\{ \partial_{\xi_0},\partial_{\phi_0},\partial_s\right\}\rvert_{(\xi_0,\phi_0,s)}$ and $\left\{U,V,W\right\}\rvert_{\gamma(s)}$, is given by 
\begin{equation*}
 J=\frac{
2\sin\frac{\phi(s)-\phi_0}{2}
}{
(\sin\phi(s)-\xi_0)
(\sin\phi_0-\xi_0)
(\xi_0^2-1)
}
\left(
2\sin\frac{\phi(s)-\phi_0}{2}
+
s\left(
\xi_0\cos\frac{\phi(s)-\phi_0}{2}
-
\sin\frac{\phi(s)+\phi_0}{2}
\right)
\right).   
\end{equation*}
\end{lem}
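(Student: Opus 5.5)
We compute the Jacobian by differentiating the closed-form expression \eqref{eq: gamma tau not 0} of $\exp_\e$ with respect to $(\xi_0,\phi_0,s)$, and then expressing the three columns in the left-invariant frame $\{U,V,W\}$ through the coframe $\{\omega_1,\omega_2,\vartheta\}$. Here $\tau_0=\sin\phi_0-\xi_0$, and we write $\phi=\phi(s;\xi_0,\phi_0)$ for the solution of \eqref{eq: ode phi}. The coordinate functions are
$$\lambda=\frac{\sin\phi-\xi_0}{\sin\phi_0-\xi_0},\qquad t=\frac{\cos\phi_0-\cos\phi}{\sin\phi_0-\xi_0},\qquad a=\phi-\phi_0+\xi_0 s.$$
Hence every partial derivative is determined by $\phi$ together with its two variations $\phi_{\xi_0}=\partial\phi/\partial\xi_0$ and $\phi_{\phi_0}=\partial\phi/\partial\phi_0$. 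The $s$-column is simply $\dot\gamma=\cos\phi\,U+\sin\phi\,V$. For any vector $X=(\dot\lambda,\dot t,\dot a)$, its components in the frame are $\omega_1(X)=\dot\lambda/\lambda$, $\omega_2(X)=\dot t/\lambda$ and $\vartheta(X)=\dot a-\dot t/\lambda$.

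The variations of $\phi$ are obtained from the ODE itself, which avoids the case-by-case explicit formulas \eqref{eq: phi 1}--\eqref{eq: phi 4}. We separate variables, $\int_{\phi_0}^{\phi}\frac{du}{\sin u-\xi_0}=s$, and differentiate this identity.
\begin{itemize}
\item Differentiating in $\phi_0$ gives $\phi_{\phi_0}=\frac{\sin\phi-\xi_0}{\sin\phi_0-\xi_0}=\lambda$.
\item Differentiating in $\xi_0$ gives $\phi_{\xi_0}=(\sin\phi-\xi_0)\int_{\phi_0}^{\phi}\frac{du}{(\sin u-\xi_0)^2}$.
\end{itemize}
We evaluate $\int\frac{du}{(\sin u-\xi_0)^2}$ in closed form using the identity
$$\frac{d}{du}\frac{\cos u}{\sin u-\xi_0}=\frac{\xi_0\sin u-1}{(\sin u-\xi_0)^2}.$$
Since $\frac{\xi_0\sin u-1}{(\sin u-\xi_0)^2}=\frac{\xi_0}{\sin u-\xi_0}+\frac{\xi_0^2-1}{(\sin u-\xi_0)^2}$, this yields
$$(\xi_0^2-1)\int_{\phi_0}^{\phi}\frac{du}{(\sin u-\xi_0)^2}=\frac{\cos\phi}{\sin\phi-\xi_0}-\frac{\cos\phi_0}{\sin\phi_0-\xi_0}-\xi_0 s.$$
This is where the factor $\xi_0^2-1$ in the denominator of $J$ comes from. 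The formula holds uniformly whenever $\xi_0^2\neq1$ and extends by continuity to $\xi_0=\pm1$.

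Next we assemble the $3\times3$ matrix of frame components. The three columns are:
\begin{itemize}
\item the $\partial_{\phi_0}$ column, with $\omega_1=\cos\phi/(\sin\phi-\xi_0)-\cos\phi_0/(\sin\phi_0-\xi_0)$ and analogous expressions for $\omega_2$ and $\vartheta$;
\item the $\partial_{\xi_0}$ column, which involves $\phi_{\xi_0}$ and the explicit $\xi_0$-derivatives of the quotients;
\item the $\partial_s$ column, $(\cos\phi,\sin\phi,0)$.
\end{itemize}
We expand the determinant along the $\partial_s$ column, which has only two nonzero entries, and substitute the expression for $\phi_{\xi_0}$.

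The main obstacle is the trigonometric simplification of the result into the stated product form. After clearing the common denominator $(\sin\phi-\xi_0)(\sin\phi_0-\xi_0)(\xi_0^2-1)$, the numerator is a combination of the terms $1-\cos(\phi-\phi_0)$, $\xi_0\sin(\phi-\phi_0)$ and $s\cdot(\ldots)$. We will pass to half-angles $\alpha=\frac{\phi-\phi_0}{2}$ and $\beta=\frac{\phi+\phi_0}{2}$, using
$$1-\cos(\phi-\phi_0)=2\sin^2\alpha,\qquad \sin(\phi-\phi_0)=2\sin\alpha\cos\alpha,\qquad \cos\phi_0-\cos\phi=2\sin\alpha\sin\beta.$$
The aim is to factor out $2\sin\alpha$ and obtain $2\sin\alpha+s(\xi_0\cos\alpha-\sin\beta)$. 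If the bookkeeping becomes unwieldy, a fallback is to verify the identity symbolically in the variable $x=\tan(\phi/2)$, or to check it at $s=0$ (where $J=0$) and compare $s$-derivatives using \eqref{eq: ode phi}.
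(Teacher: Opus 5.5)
Your overall plan is the direct computation the paper intends: differentiate the closed form \eqref{eq: gamma tau not 0}, read off frame components via $\omega_1,\omega_2,\vartheta$, expand along the $\partial_s$-column, evaluate $\int\frac{du}{(\sin u-\xi_0)^2}$ through $\frac{d}{du}\frac{\cos u}{\sin u-\xi_0}$, then pass to half-angles. Your closed form for $(\xi_0^2-1)\int_{\phi_0}^{\phi}\frac{du}{(\sin u-\xi_0)^2}$ is correct, and so is $\phi_{\phi_0}=\lambda$. However, two of the concrete inputs you write down are wrong, and each would corrupt the determinant. Write $D=\sin\phi-\xi_0$ and $D_0=\sin\phi_0-\xi_0$.

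(i) Differentiating $\int_{\phi_0}^{\phi}\frac{du}{\sin u-\xi_0}=s$ in $\xi_0$ gives $\frac{\phi_{\xi_0}}{D}+\int_{\phi_0}^{\phi}\frac{du}{(\sin u-\xi_0)^2}=0$. Hence $\phi_{\xi_0}=-D\int_{\phi_0}^{\phi}\frac{du}{(\sin u-\xi_0)^2}$, with a minus sign. This is consistent with the linearised equation $\dot\psi=\cos\phi\,\psi-1$, $\psi(0)=0$, which gives $\psi\approx -s$. The sign is not harmless. With $a=\phi-\phi_0+\xi_0 s$, the $\phi_{\xi_0}$-part of the $\partial_{\xi_0}$-column in $(\lambda,t,a)$-components is $\phi_{\xi_0}(\cos\phi/D_0,\sin\phi/D_0,1)=\frac{\phi_{\xi_0}}{D}\dot\gamma+(0,0,-\xi_0\phi_{\xi_0}/D)$. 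This leaves the non-removable term $-\xi_0\phi_{\xi_0}/D$ in $\vartheta(\partial_{\xi_0}\gamma)$, and your sign turns $+\xi_0\int\frac{du}{(\sin u-\xi_0)^2}$ into $-\xi_0\int\frac{du}{(\sin u-\xi_0)^2}$ there.

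(ii) The $\omega_1$-entry of the $\partial_{\phi_0}$-column is $\partial_{\phi_0}\log\lambda=\frac{\cos\phi\,\phi_{\phi_0}}{D}-\frac{\cos\phi_0}{D_0}=\frac{\cos\phi-\cos\phi_0}{D_0}$. Your expression drops the factor $\phi_{\phi_0}=\lambda$ that you had just found. For $\xi_0=0$ the two versions are $-s\sin\phi_0$ and $-s/\sin\phi_0$ to first order.

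The paper avoids both pitfalls by differentiating at fixed terminal angle $\phi$, i.e. in the variables $(\xi_0,\phi_0,\phi)$ with $s$ given by \eqref{arclength}. The chain-rule corrections are multiples of $\partial_s\exp_{\rm e}$, so $\phi_{\xi_0}$ and $\phi_{\phi_0}$ never appear. A clean way to finish uses $P:=\sin\phi\,\omega_1-\cos\phi\,\omega_2$. Both $\vartheta$ and $P$ annihilate $\dot\gamma$, so expanding along the $\partial_s$-column gives $J=\vartheta(\partial_{\xi_0}\gamma)P(\partial_{\phi_0}\gamma)-\vartheta(\partial_{\phi_0}\gamma)P(\partial_{\xi_0}\gamma)$. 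Set $\alpha=\frac{\phi-\phi_0}{2}$, $\beta=\frac{\phi+\phi_0}{2}$, $Q=\sin\beta-\xi_0\cos\alpha$. Then
\[
P(\partial_{\xi_0}\gamma)=\vartheta(\partial_{\phi_0}\gamma)=\frac{2\sin^2\alpha}{DD_0},\qquad P(\partial_{\phi_0}\gamma)=-\frac{2\sin\alpha\,Q}{DD_0},\qquad \vartheta(\partial_{\xi_0}\gamma)=\frac{2\sin\alpha\,Q-s\,DD_0}{(\xi_0^2-1)DD_0}.
\]
The factorisation you leave as ``the aim'' then depends on an identity you do not mention: $DD_0=Q^2+(\xi_0^2-1)\sin^2\alpha$, which is the paper's \eqref{eq: J2}. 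It turns the $s$-free part $-4\sin^2\alpha\,[Q^2+(\xi_0^2-1)\sin^2\alpha]$ into $-4\sin^2\alpha\,DD_0$ and cancels one power of $DD_0$. The result is $J=\frac{2\sin\alpha\,(sQ-2\sin\alpha)}{(\xi_0^2-1)DD_0}$.

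Note that this is minus the displayed expression. With the stated orderings of the bases, $J=-s^4/12+O(s^5)$ as $s\to0$, whereas the displayed formula gives $+s^4/12$. So the statement has an orientation sign slip. It is harmless for the zero set used in Proposition \ref{conj}, where ``$J>0$'' should read ``$J<0$''. Your proposed small-$s$ fallback check would flag this sign, not an error in your algebra.
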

\begin{proof}
We let $(\xi_0,\phi_0,s)\in C_\e\times\R_+$ and use Theorem \ref{thm: Geodesics of AA} and the o.d.e. (\ref{eq: angle ode}) to calculate the arc-length $s$ of $\gamma(s)=\exp_\e(\xi_0,\phi_0,s)$:
\begin{equation}
s
=
\int_{\phi_0}^{\phi(s)}
\frac{du}{\sin u-\xi_0}. \label{arclength}
\end{equation}
Furthermore, we have:
$a(s)=\int_{\phi_0}^{\phi(s)}\frac{\sin u}{\sin u-\xi_0}\,du$, $s\in\R_+$. From now on, we consider $\lambda, t, a$ and $\phi$ as an independent endpoint variables. The chain-rule corrections are proportional to
$\frac{\partial\exp_\e}{\partial s}$ and therefore do not change the
determinant. Partial derivatives of $\exp_e(\xi_0,\phi_0,s)=(\lambda,t,a)$ are computed as follows 
\begingroup
\small
\begin{alignat*}{3}
\frac{\partial\lambda}{\partial\xi_0}
&=
\frac{\sin\phi-\sin\phi_0}
     {(\xi_0-\sin\phi_0)^2},
&\qquad
\frac{\partial\lambda}{\partial\phi_0}
&=
\frac{\cos\phi_0(\xi_0-\sin\phi)}
     {(\xi_0-\sin\phi_0)^2},
&\qquad
\frac{\partial\lambda}{\partial s}
&=
\frac{\sin\phi-\xi_0}{\sin\phi_0-\xi_0}\cos\phi,
\\
\frac{\partial t}{\partial\xi_0}
&=
\frac{\cos\phi_0-\cos\phi}
     {(\xi_0-\sin\phi_0)^2},
&\qquad
\frac{\partial t}{\partial\phi_0}
&=
\frac{\xi_0\sin\phi_0+\cos\phi\cos\phi_0-1}
     {(\xi_0-\sin\phi_0)^2},
&\qquad
\frac{\partial t}{\partial s}
&=
\frac{\sin\phi-\xi_0}{\sin\phi_0-\xi_0}\sin\phi,
\\
\frac{\partial a}{\partial\xi_0}
&=
\int_{\phi_0}^{\phi}
\frac{\sin u}{(\sin u-\xi_0)^2}\,du,
&\qquad
\frac{\partial a}{\partial\phi_0}
&=
-\frac{\sin\phi_0}{\sin\phi_0-\xi_0},
&\qquad
\frac{\partial a}{\partial s}
&=
\sin\phi.
\end{alignat*}
Now, the desired identity for $J$ follows from a direct computation. 
\endgroup

\end{proof}

\begin{prop}\label{conj}
    The conjugate time of a trajectory $\gamma:\R_+\to\Aa$ with initial data $\gamma(0)=\e$ and $(\xi_0,\phi_0)\in\R\times\mathbb{S}^1$ is given by
$$
T_{\mathrm{conj}}
=
\begin{cases}
\displaystyle
\frac{2\pi}{\sqrt{\xi_0^2-1}},
&\text{if } |\xi_0|>1,\\[1.2ex]
+\infty,
& \text{if }-1\leq\xi_0\leq1.
\end{cases}
$$
\end{prop}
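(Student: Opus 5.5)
The plan is to read off the first positive zero of the Jacobian $J$ from Lemma \ref{lem: J}. Along a normal trajectory with $\sin\phi_0-\xi_0\neq0$, the denominator $(\sin\phi(s)-\xi_0)(\sin\phi_0-\xi_0)(\xi_0^2-1)$ never vanishes for finite $s$: $\phi(s)$ never reaches an equilibrium of \eqref{eq: ode phi}, by the uniqueness argument in the proof of Proposition \ref{lem: ang period}. The exception $|\xi_0|=1$ will be handled by continuity, or by clearing the factor $\xi_0^2-1$ before evaluating. So conjugate times are exactly the positive zeros of
\begin{equation*}
F(s):=2\sin\tfrac{\phi(s)-\phi_0}{2}\Big(2\sin\tfrac{\phi(s)-\phi_0}{2}+s\big(\xi_0\cos\tfrac{\phi(s)-\phi_0}{2}-\sin\tfrac{\phi(s)+\phi_0}{2}\big)\Big).
\end{equation*}
By the reflection $j$, as in Proposition \ref{lem: ang period}, we may assume $\sin\phi_0-\xi_0>0$, so $\phi$ is strictly increasing.

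\emph{Case $|\xi_0|>1$.} The first factor vanishes exactly when $\phi(s)-\phi_0\in 2\pi\mathbb{Z}$. The first such positive time is the period $T_{\xi_0}=2\pi/\sqrt{\xi_0^2-1}$ computed in Proposition \ref{lem: ang period}, so $T_{\rm conj}\le T_{\xi_0}$. It then remains to show that the second factor
\begin{equation*}
G(s)=2\sin\tfrac{\psi}{2}+s\big(\xi_0\cos\tfrac{\psi}{2}-\sin(\phi_0+\tfrac{\psi}{2})\big),\qquad \psi=\phi(s)-\phi_0\in(0,2\pi),
\end{equation*}
has no zero on $(0,T_{\xi_0})$. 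I would change variables from $s$ to $\psi$, using \eqref{arclength} to write $s=\int_{\phi_0}^{\phi_0+\psi}\frac{du}{\sin u-\xi_0}$. Since $\xi_0<-1$ after the reduction, $G(0)=0$ and $\tfrac{dG}{d\psi}\big|_{0}>0$. The strategy is to show $G/\sin\tfrac{\psi}{2}>0$, equivalently that $\frac{2\sin(\psi/2)}{\sin(\phi_0+\psi/2)-\xi_0\cos(\psi/2)}$ dominates $s(\psi)$ where the denominator is positive, while $G>0$ trivially where it is nonpositive. For this I would use the explicit $\tan$-form solution \eqref{eq: phi 3}, which makes $s$ an affine function of $\arctan$ of a Möbius expression in $\tan(\phi/2)$. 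The comparison then reduces to the elementary inequality $\theta<\tan\theta$ for $\theta\in(0,\pi/2)$, or $\theta\cot\theta<1$. This is the standard mechanism in the Heisenberg-type computation. I would also check that $G(T_{\xi_0})\neq0$: there $\psi=2\pi$, so $G=s(-\xi_0\cdot(-1)+\sin\phi_0)\cdot(-1)\ne 0$ generically. Even if $G$ vanished at $T_{\xi_0}$, the first factor already vanishes there, so $T_{\rm conj}=T_{\xi_0}$ in any case.

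\emph{Case $-1\le\xi_0\le1$.} Here $\psi(s)\in(0,\phi^*-\phi_0)\subset(0,2\pi)$ for all $s$, by Proposition \ref{lem: ang period}, so the first factor never vanishes. For $G$ I would use the same $\psi$-parametrisation with the $\tanh$, exponential and rational solutions \eqref{eq: phi 1}, \eqref{eq: phi 2}, \eqref{eq: phi 4}. Now $s(\psi)$ involves $\operatorname{arctanh}$, and the needed comparison becomes $\theta\coth\theta>1$, which forces $G\neq0$ for all $s>0$. The borderline $\xi_0=-1$ follows either by the rational formula or as a limit. The equilibrium trajectories $\sin\phi_0=\xi_0$ are the explicit curves of Theorem \ref{thm: Geodesics of AA}(1)–(2). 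For these I would compute $D\exp_\e$ directly, or note that they are limits of nonconjugate families, and check that it is nondegenerate.

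The main obstacle is the sign analysis of $G$. It must hold uniformly in $\phi_0$, and the term $\sin(\phi_0+\psi/2)$ couples the initial angle to the arclength integral. I would first try to absorb $\phi_0$ by the translation $u\mapsto u-\phi_0$ combined with the half-angle substitution, reducing everything to a one-variable inequality in a single $\tan$ or $\tanh$ argument. If that fails, the fallback is a monotonicity argument: show that $\frac{d}{d\psi}\big(G/\sin\tfrac{\psi}{2}\big)$ has a fixed sign on $(0,2\pi)$.
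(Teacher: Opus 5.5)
Your proposal follows essentially the same route as the paper. You factor the Jacobian of Lemma \ref{lem: J} and use the half-angle substitution to get $\tan\Theta=\kappa\sin\frac{\phi-\phi_0}{2}\big/\bigl(\sin\frac{\phi+\phi_0}{2}-\xi_0\cos\frac{\phi-\phi_0}{2}\bigr)$ with $\Theta=\kappa s/2$ (and its $\tanh$ analogue for $|\xi_0|<1$); this absorbs $\phi_0$ and reduces the sign of your second factor to $\Theta<\tan\Theta$, resp.\ $\widetilde\Theta\coth\widetilde\Theta>1$, while $\xi_0=-1$, the reflected sector and the equilibrium curves are handled by continuity, the symmetry $(\varepsilon_1,j)$ and direct computation.

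Two side remarks are wrong but not load-bearing. First, $G$ vanishes to third order at $\psi=0$: $G\approx\frac{(\sin\phi_0-\xi_0)(\xi_0^2-1)}{12}s^3$, so $dG/d\psi|_{0}=0$, not $>0$. Second, at $\psi=2\pi$ one gets $G=T_{\xi_0}(\sin\phi_0-\xi_0)>0$, not the expression you wrote.
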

\begin{proof}
By definition, for a conjugate time $s>0$ we have that the tangent map of the exponential map
$
D\,\exp_\e\big|_{(\xi_0,\phi_0,s)}:{\rm T}_{(\xi_0,\phi_0,s)}C_\e\times \R_+\to{\rm T}_{\gamma(s)}\Aa
$ is not surjective.
Following the reasoning from \cite[Section 3.1.3]{BR-CCmetrics}, we have that the latter condition is equivalent to the linear dependence of the vectors
$\gamma(s)^{-1}\cdot
\frac{\partial\exp_\e}{\partial\xi_0},
\quad    
\gamma(s)^{-1}\cdot
\frac{\partial\exp_\e}{\partial\phi_0},
\quad
\gamma(s)^{-1}\cdot
\frac{\partial\exp_\e}{\partial s},
$
with $\gamma(s)=\exp_\e(\xi_0,\phi_0,s)$. Hence, the conjugate times are determined by
$
 J=0,
$
where $J$ is the Jacobian determinant of the matrix representing $D\exp_\e\big|_{(\xi_0,\phi_0,s)}$ with respect to the bases $\left\{ \partial_{\xi_0},\partial_{\phi_0},\partial_s\right\}\rvert_{(\xi_0,\phi_0,s)}$ and $\left\{U,V,W\right\}\rvert_{\gamma(s)}.$

Thus, the conjugate times are the positive solutions of
\begin{equation}\label{eq: J=0}
     J=0 .
\end{equation}
We now assume $\sin\phi_0-\xi_0>0$, we shall consider the cases $\sin\phi_0-\xi_0<0$ and $\sin\phi_0-\xi_0=0$ at the end of the proof.

\noindent \textit{Case I:} $\xi_0<-1$. We let
$
\kappa=\sqrt{\xi_0^2-1}$ and
$
\Theta=\frac{\kappa s}{2}.
$
Using the half-angle substitution
$z=\tan\frac{u}{2}$ in (\ref{arclength}),
we obtain
$$
s
=
\int_{\tan(\phi_0/2)}^{\tan(\phi/2)}
\frac{2\,dz}{2z-\xi_0(1+z^2)}.
$$
Since
$
2z-\xi_0(1+z^2)
=
-\frac{\kappa^2+(1-\xi_0z)^2}{\xi_0},
$
we find
$$
s
=
\frac{2}{\kappa}
\left[
\arctan\left(
\frac{1-\xi_0\tan(u/2)}{\kappa}
\right)
\right]_{u=\phi_0}^{u=\phi}.
$$
Therefore,
\begin{equation}\label{eq: Theta}
    \Theta
=
\arctan\left(
\frac{1-\xi_0\tan(\phi/2)}{\kappa}
\right)
-
\arctan\left(
\frac{1-\xi_0\tan(\phi_0/2)}{\kappa}
\right) +k\pi,\quad k\in\mathbb{Z}.
\end{equation}
When
\begin{equation}\label{eq: tat hyp}
    1+\left(
\frac{1-\xi_0\tan(\phi/2)}{\kappa}
\right)\left(
\frac{1-\xi_0\tan(\phi_0/2)}{\kappa}
\right)\neq0,
\end{equation}
we rewrite \eqref{eq: Theta} using the identity
$
\tan(\arctan x-\arctan y+k\pi)
=
\frac{x-y}{1+xy},
$ for all $k\in\mathbb Z$ and for all $x,y\in\R$ such that $1+xy\neq 0$,
resulting in
\begin{equation*}
\tan\Theta
=
\frac{
\kappa
\left(
\tan\frac{\phi}{2}-\tan\frac{\phi_0}{2}
\right)
}{
\tan\frac{\phi}{2}+\tan\frac{\phi_0}{2}
-
\xi_0
\left(
1+\tan\frac{\phi}{2}\tan\frac{\phi_0}{2}
\right)
}.
\end{equation*} 
By means of trigonometric identities, $\tan\Theta$ simplifies to
$\tan\Theta
=
\frac{
\kappa\sin\left(\frac{\phi-\phi_0}{2}\right)
}{
\sin\left(\frac{\phi+\phi_0}{2}\right)
-
\xi_0\cos\left(\frac{\phi-\phi_0}{2}\right)
}.
$
We rewrite the latter identity as
\begin{equation}\label{eq: J1}
\sin\left(\frac{\phi-\phi_0}{2}\right)
=
\frac{\tan\Theta}{\kappa}
\left(
\sin\left(\frac{\phi+\phi_0}{2}\right)
-
\xi_0\cos\left(\frac{\phi-\phi_0}{2}\right)
\right).
\end{equation}
Also, we observe that
\begin{align}\label{eq: J2}
(\sin\phi-\xi_0)(\sin\phi_0-\xi_0)
&=
\sin\phi\sin\phi_0
-\xi_0(\sin\phi+\sin\phi_0)
+\xi_0^2
\nonumber\\
&=
\left(
\sin\frac{\phi+\phi_0}{2}
-
\xi_0\cos\frac{\phi-\phi_0}{2}
\right)^2
+
(\xi_0^2-1)
\sin^2\frac{\phi-\phi_0}{2}.
\end{align}
Since $\kappa^2=\xi_0^2-1$ and
$s=\frac{2\Theta}{\kappa},
$
we have
\begin{align}\label{eq: J3}
&2\sin\frac{\phi-\phi_0}{2}
+
s\left(
\xi_0\cos\frac{\phi-\phi_0}{2}
-
\sin\frac{\phi+\phi_0}{2}
\right)
=
\frac{2}{\kappa}
\left(
\sin\frac{\phi+\phi_0}{2}
-
\xi_0\cos\frac{\phi-\phi_0}{2}
\right)
(\tan\Theta-\Theta).
\end{align}
Substituting \eqref{eq: J1}, \eqref{eq: J2} and \eqref{eq: J3} into the expression for $J$ given in Lemma \ref{lem: J}, yields:
\begin{equation*}\label{eq: newJ}
     J
=
\frac{4}{(\xi_0^2-1)^2}
\sin\Theta
\bigl(
\sin\Theta-\Theta\cos\Theta
\bigr).
\end{equation*}
From a straightforward computation we deduce that assumption \eqref{eq: tat hyp} does not hold if and only if $ \cos\Theta=0$. In this case we have $\sin\Theta=\pm1$ and combined with the continuity of $J$, leads to $J=\frac{4}{(\xi_0^2-1)^2}>0
$.

To sum up, we have
$
 J=0$ if and only if $
\sin\Theta=0
\text{ or }
\tan\Theta=\Theta.
$
The first positive zero of $\sin\Theta$ is $\Theta=\pi$. By the definition of $\Theta$ and $\kappa$
this yields the candidate conjugate time:
$$
s=\frac{2\pi}{\kappa}=\frac{2\pi}{\sqrt{\xi_0^2-1}}.
$$
It remains to verify that the second factor does not vanish at an earlier
time. Since $\Theta(s)$ is strictly increasing and $\Theta(0)=0$, we have
$$
0<s<\frac{2\pi}{\kappa}
\quad\Longleftrightarrow\quad
0<\Theta<\pi.
$$
We define the function
$
g(\Theta)=\sin\Theta-\Theta\cos\Theta.$
For $\Theta\in (0,\pi)$, we have
$
g'(\Theta)=\Theta\sin\Theta>0,
$
 $\Theta\in(0,\pi)$.
Since $g(0)=0$, it follows that
$
g(\Theta)>0$
for all $\Theta\in (0,\pi)$.
Moreover, $\sin\Theta>0$ for all $\Theta\in(0,\pi)$. Therefore,
$
 J>0
\text{ for all } s\in \left(0, \frac{2\pi}{\sqrt{\xi_0^2-1}}\right),
$
implying that no conjugate time occurs before $\Theta=\pi$. Consequently,
$\Theta=\pi$ gives the first solution of \eqref{eq: J=0}, and
$T_{\mathrm{conj}}(\xi_0,\phi_0)
=
\frac{2\pi}{\sqrt{\xi_0^2-1}},$
for $
\xi_0<-1.$

\noindent \textit{Case II:} $|\xi_0|<1$. Recalling that we assumed 
$\sin\phi_0>\xi_0$, we let
$\mu=\sqrt{1-\xi_0^2}$ and $\widetilde{\Theta}=\frac{\mu s}{2}
$.

As in the first case, we use (\ref{arclength}) and the half-angle substitution $z=\tan\frac{u}{2}$ to calculate:

$$
s=
\int_{\tan(\phi_0/2)}^{\tan(\phi/2)}
\frac{2\,dz}{2z-\xi_0(1+z^2)}.
$$
For $\xi_0\neq0$, the integration yields
\begin{equation}\label{eq: s1}
    s=
\frac{1}{\mu}
\left[
\log\left|
\frac{\mu+\xi_0\tan(u/2)-1}
{\mu-\xi_0\tan(u/2)+1}
\right|
\right]_{u=\phi_0}^{u=\phi}.
\end{equation}
For $\xi_0=0$, the integral is computed separately:
\begin{equation}\label{eq: s2}
    s=
\int_{\phi_0}^{\phi}\frac{du}{\sin u}
=
\left[
\log\left|\tan\frac{u}{2}\right|
\right]_{u=\phi_0}^{u=\phi}.
\end{equation}
We rewrite both \eqref{eq: s1} and \eqref{eq: s2} using
the hyperbolic trigonometric identity $
\tanh(x-y)
=
\frac{\tanh x-\tanh y}
{1-\tanh x\tanh y},
$ for all $x,y\in \R$, and obtain in both cases:
\begin{equation}\label{eq: Jh0}
    \tanh\widetilde{\Theta}
=
\frac{
\mu\sin\left(\frac{\phi-\phi_0}{2}\right)
}{
\sin\left(\frac{\phi+\phi_0}{2}\right)
-
\xi_0\cos\left(\frac{\phi-\phi_0}{2}\right)
}.
\end{equation}
We rewrite the latter identity as
\begin{equation}\label{eq: Jh1}
    \sin\frac{\phi-\phi_0}{2}
=
\frac{\tanh\widetilde{\Theta}}{\mu}
\left(
\sin\frac{\phi+\phi_0}{2}
-
\xi_0\cos\frac{\phi-\phi_0}{2}
\right).
\end{equation}
After making use of \eqref{eq: Jh0}, the following identity holds:
\begin{align}\label{eq: Jh2}
(\sin\phi-\xi_0)(\sin\phi_0-\xi_0)
&=
\left(
\sin\frac{\phi+\phi_0}{2}
-
\xi_0\cos\frac{\phi-\phi_0}{2}
\right)^2
(1-\tanh^2\widetilde{\Theta})
=
\frac{
\left(
\sin\frac{\phi+\phi_0}{2}
-
\xi_0\cos\frac{\phi-\phi_0}{2}
\right)^2
}{
\cosh^2\widetilde{\Theta}
}.
\end{align}
Moreover, since
$
s=\frac{2\widetilde{\Theta}}{\mu}$ and
$\xi_0^2-1=-\mu^2,$
we have
\begin{align}\label{eq: Jh3}
&2\sin\frac{\phi-\phi_0}{2}
+
s\left(
\xi_0\cos\frac{\phi-\phi_0}{2}
-
\sin\frac{\phi+\phi_0}{2}
\right)
=
\frac{2}{\mu}
\left(
\sin\frac{\phi+\phi_0}{2}
-
\xi_0\cos\frac{\phi-\phi_0}{2}
\right)
(\tanh\widetilde{\Theta}-\widetilde{\Theta}).
\end{align}
Substituting \eqref{eq: Jh1}, \eqref{eq: Jh2} and \eqref{eq: Jh3} into the expression for $J$ given in Lemma \ref{lem: J}, yields
$$
J
=
\frac{4}{\mu^4}
\tanh\widetilde{\Theta}\,\cosh^2\widetilde{\Theta}
(\widetilde{\Theta}-\tanh\widetilde{\Theta}).
$$
Finally, using
$
\widetilde{\Theta}-\tanh\widetilde{\Theta}
=
\frac{\widetilde{\Theta}\cosh\widetilde{\Theta}-\sinh\widetilde{\Theta}}{\cosh\widetilde{\Theta}}
$
and 
 $\mu^4=(1-\xi_0^2)^2$, the latter equation becomes
\begin{equation}
J=\frac{4}{(1-\xi_0^2)^2}
\sinh\widetilde{\Theta}
\bigl(
\widetilde{\Theta}\cosh\widetilde{\Theta}-\sinh\widetilde{\Theta}
\bigr).\label{detJ2}
\end{equation}
Since $\mu>0$ and $s>0$, we have $\widetilde{\Theta}>0$, and consequently
$\sinh\widetilde{\Theta}>0$.
To determine the sign of the second factor, we define
$
G(\widetilde{\Theta})
=
\widetilde{\Theta}\cosh\widetilde{\Theta}-\sinh\widetilde{\Theta}, \;{\Theta}>0.
$
Then,
$G(0)=0$
and
$
G'(\widetilde{\Theta})
=
\widetilde{\Theta}\sinh\widetilde{\Theta}>0
,$ 
for all $\widetilde{\Theta}>0.$
Thus,
$G(\widetilde{\Theta})>0$ for all
$\widetilde{\Theta}>0.$
Consequently,
$
 J>0,
\text{ for all } s>0.
$
Hence, the exponential map has no critical point at any positive time and
there is no finite conjugate time, i.e. 
$T_{\mathrm{conj}}(\xi_0,\phi_0)
=
+\infty,
\text{ for }
-1<\xi_0<1.$

\noindent \textit{Case III:} $\xi_0=-1$. The expression (\ref{detJ2}) obtained in Case II appears singular at $\xi_0=-1$.
This singularity is removable. Indeed, the Hamiltonian vector field is
smooth, and its flow depends smoothly on the initial covector. Consequently,
$\exp_\e$, its differential, and the determinant $J$ depend
continuously on $\xi_0$. Because we assumed $\sin\phi_0-\xi_0>0$, at $\xi_0=-1,$ we have $\sin\phi_0+1>0$. This, also holds for $\xi_0>-1$ sufficiently close to $-1$, which means that (\ref{detJ2}) can be used by taking the limit at $\xi_0\to-1^+$:
$$
J(-1,\phi_0,s)
=
\lim_{\xi_0\to-1^+}J(\xi_0,\phi_0,s).
$$
The Taylor expansions
$
\sinh\widetilde{\Theta}=\widetilde{\Theta}+O(\widetilde{\Theta}^3)$ and  
$\widetilde{\Theta}\cosh\widetilde{\Theta}-\sinh\widetilde{\Theta}
=
\frac{\widetilde{\Theta}^3}{3}+O(\widetilde{\Theta}^5),
$ for $\widetilde{\Theta}\to0$, together with the setting for Case II given by $\mu=\sqrt{1-\xi_0^2}$ and $
\widetilde{\Theta}=\frac{\mu s}{2}$ (since $\mu\to 0^+$, as $\xi_0\to -1^+$), 
imply
$$
J(-1,\phi_0,s)
=
\lim_{\mu\to0^+}
\frac{4}{\mu^4}
\sinh\widetilde{\Theta}
\bigl(
\Theta\cosh\widetilde{\Theta}-\sinh\widetilde{\Theta}
\bigr)
=
\frac{s^4}{12}.
$$
Thus, we conclude that
$
J(-1,\phi_0,s)>0
$ for all $s>0$. In other words,
$
T_{\mathrm{conj}}(-1,\phi_0)=+\infty.
$
To complete the proof, we consider the case $\sin\phi_0-\xi_0<0$. We recall the induced reflection \eqref{e1} on the costate $C_\e$, given by $\varepsilon_1(\xi_0,\phi_0,s)=(-\xi_0,-\phi_0,s)$. We have the following composition of tangent maps, induced by the symmetry $(\varepsilon_1,j)$:
$$\left(D\,\exp_\e\right)_{(-\xi_0,-\phi_0,s)}\circ D\,{\varepsilon_1}_{(\xi_0,\phi_0,s)}=D\,j_{\exp_\e(\xi_0,\phi_0,s)}\circ \left(D\,\exp_\e \right)_{(\xi_0,\phi_0,s)}.$$
Since $D\varepsilon_1$ and $Dj$ are invertible, it follows that
$\mathrm{rank}
\left(D\exp_\e\rvert_{(\xi_0,\phi_0,s)}\right)=\mathrm{rank}
\left(D\exp_\e\rvert_{(-\xi_0,-\phi_0,s)}\right).$
As a consequence, the conjugate times are preserved by this symmetry. In particular, if $\xi_0>1$, then $-\xi_0<-1$, and the preceding computation gives $T_\mathrm{conj}(\xi_0,\phi_0)=T_\mathrm{conj}(-\xi_0,-\phi_0)=\frac{2\pi}{\sqrt{\xi_0^2-1}}$.
Similarly, the case $\xi_0=1$, follows from the already established case $\xi_0=-1$, leading to an infinite conjugate time. For $\xi_0\in (-1,1)$, the same symmetry $(\varepsilon_1,j)$ maps the sector $\sin\phi_0-\xi_0<0$ to the sector $\sin\phi_0-\xi_0>0$ (already analyzed) and no finite conjugate time occurs.

Finally, for $\sin\phi_0-\xi_0=0$ the conclusion follows by direct differentiation of the explicit geodesics in cases $(1)-(2)$ of Theorem \ref{thm: Geodesics of AA}. Thus, no finite conjugate time occurs in this case. After examining all cases, the proof is now complete.
\end{proof}

In order to prove that inequality (\ref{MAXbound}) holds as an equality, we will apply Theorem \ref{Hadamard} in the following setting.

We let \begin{equation}\label{eq: X,Y}
    \begin{aligned}
X_0&= \left\{(\xi_0,\phi_0,s): \, |\xi_0|\leq 1, \phi_0\in \mathbb{S}^1, s\in (0,+\infty)  \right\}\\
X&=\left\{(\xi_0,\phi_0,s)\, :|\xi_0|>1,\phi_0\in \mathbb{S}^1,s \in \left(0 , T_{\xi_0}\right) \right\}\cup X_0\\
Y&=\Aa\setminus \overline{\text{Max}},\end{aligned}
\end{equation}
where $\overline{\mathrm{Max}}=\left\{(1,0,a) \, : \, a\in \R \right\}$ denotes the closure of $\text{Max}$ and $T_{\xi_0}$ is given by (\ref{period}) . We consider the restricted exponential map $F$ given by  
\begin{equation}\label{eq: F}
    F:=\mathrm{exp_e}|_X : X\to Y.
\end{equation}
The following results will turn useful later.
\begin{lem}\label{Lem: WD}
    Let $X, Y$ be the spaces and $F:X\to Y$ be the map given by \eqref{eq: X,Y} and \eqref{eq: F}, respectively. The map $F:X\to Y$ is well-defined.
\end{lem}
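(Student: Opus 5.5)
The claim to prove is that $\exp_\e$ maps $X$ into $Y$, i.e. that $\gamma(s)=\exp_\e(\xi_0,\phi_0,s)\notin\overline{\mathrm{Max}}=\{(1,0,a):a\in\R\}$ for every $(\xi_0,\phi_0,s)\in X$. Smoothness and definedness of $\exp_\e$ on all of $C_\e\times\R_+$ are already given by the explicit formulas of Theorem \ref{thm: Geodesics of AA}. So the whole task is to rule out the simultaneous conditions $\lambda(s)=1$ and $t(s)=0$ for $s>0$ and $(\xi_0,\phi_0,s)\in X$. The argument splits according to whether $\tau_0=\sin\phi_0-\xi_0$ vanishes.

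\emph{Degenerate case $\tau_0=0$.} Here $\sin\phi_0=\xi_0$, so $|\xi_0|\le 1$ and $(\xi_0,\phi_0,s)\in X_0$.
\begin{itemize}
\item If also $\eta_0=\cos\phi_0=0$, case (1) of Theorem \ref{thm: Geodesics of AA} gives $\gamma(s)=(1,\xi_0 s,\xi_0 s)$ with $\xi_0=\pm1$. Then $t(s)=\xi_0 s\neq 0$ for $s>0$.
\item If $\eta_0\neq0$, case (2) gives $\lambda(s)=e^{\eta_0 s}\neq 1$ for $s>0$.
\end{itemize}
In both subcases $\gamma(s)\notin\overline{\mathrm{Max}}$.

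\emph{Main case $\tau_0\neq0$.} Use \eqref{eq: gamma tau not 0}.
\begin{itemize}
\item $\lambda(s)=1$ means $\sin\phi(s)=\sin\phi_0$.
\item $t(s)=0$ means $\cos\phi(s)=\cos\phi_0$.
\end{itemize}
Together these give $\phi(s)=\phi_0+2k\pi$ for some $k\in\mathbb Z$. By the o.d.e. \eqref{eq: ode phi} and uniqueness, $\dot\phi$ never vanishes, since it has the constant sign of $\tau_0$ (as used in the proof of Proposition \ref{lem: ang period}). Hence $\phi$ is strictly monotone, and $s>0$ forces $k\neq0$.

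By definition \eqref{eq : Txi0}, $\phi(s)=\phi_0+2k\pi$ with $k\neq0$ then implies $s\ge T_{\xi_0}$. This is excluded in both parts of $X$:
\begin{itemize}
\item If $|\xi_0|\le1$, then $T_{\xi_0}=+\infty$ by Proposition \ref{lem: ang period}, so no finite $s$ qualifies.
\item If $|\xi_0|>1$, membership in $X$ requires $s<T_{\xi_0}$.
\end{itemize}
Either way we reach a contradiction, so $\gamma(s)\notin\overline{\mathrm{Max}}$.

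The only delicate point is the monotonicity and "no full rotation before $T_{\xi_0}$" step. It is already essentially established in the proof of Proposition \ref{lem: ang period}: for $\xi_0<-1$ we have $\dot\phi>0$ throughout, and for $|\xi_0|\le1$ the solution $\phi$ is trapped below the equilibrium $\phi^*$. The cases $\xi_0\ge 1$, or $\tau_0<0$, follow by applying the reflection $j$, which preserves both $\overline{\mathrm{Max}}$ and the sets $X_0$, $X$ via $\varepsilon_1$.
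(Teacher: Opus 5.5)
Your proof is correct and follows essentially the same route as the paper. You split on whether $\tau_0=\sin\phi_0-\xi_0$ vanishes and handle the degenerate case with the explicit formulas of cases (1)--(2) of Theorem \ref{thm: Geodesics of AA}; otherwise you reduce $\lambda(s)=1,\ t(s)=0$ to $\phi(s)=\phi_0+2k\pi$, which contradicts $s<T_{\xi_0}$ (or $T_{\xi_0}=+\infty$), and your use of the strict monotonicity of $\phi$ to exclude $k=0$ makes explicit a step the paper leaves implicit.
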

\begin{proof}
We are going to prove that $F(X)\subset Y$.  We assume $\sin\phi_0-\xi_0\neq0$ and use the definition of exponential map \eqref{Expe}. For all $\left( \xi_0,\phi_0,s\right)\in X\setminus X_0$ the following holds
\begin{equation}F(\xi_0,\phi_0,s)\in \overline{\text{Max}}\iff \sin\phi(s)=\sin\phi_0, \, \cos\phi(s)=\cos\phi_0\iff \phi(s)=\phi_0+2k\pi, \quad k\in \mathbb{Z}.\label{imF}\end{equation}
The last expression leads to a contradiction, since $s\in (0,T_{\xi_0})$ or equivalently $\phi$ does not complete a full rotation. If $\left( \xi_0,\phi_0,s\right)\in X_0$, similarly by \eqref{imF}, it must be $\phi(s)=\phi_0+2k\pi, \quad k\in \mathbb{Z}$. Recalling the proof of Prop \ref{lem: ang period}, this case yields $|\phi(s)-\phi_0|<2\pi$, for all $s>0$. Hence, $F(\xi_0,\phi_0,s)\notin \overline{\text{Max}}$.
If $\sin\phi_0-\xi_0=0$, we are dealing with the cases $(1)-(2)$ of Theorem \ref{thm: Geodesics of AA}. We have a contradiction too, because $(\lambda(s),t(s))\neq (1,0)$ for all $s>0$.

\end{proof} 
\begin{lem}\label{PropernessOfF}
   Let $X, Y$ be the spaces and $F:X\to Y$ be the map given by \eqref{eq: X,Y} and \eqref{eq: F}, respectively. The map $F:X\to Y$ is proper.
\end{lem}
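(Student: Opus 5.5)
To prove that $F$ is proper, I will use the sequential criterion. Since $X$ and $Y$ are metrizable, it suffices to show the following: if $(\xi_0^n,\phi_0^n,s_n)\in X$ is a sequence with $F(\xi_0^n,\phi_0^n,s_n)\to p\in Y$, then the sequence has a subsequence converging in $X$. Because $\mathbb{S}^1$ is compact, we may pass to a subsequence with $\phi_0^n\to\phi_0$. Loss of compactness can then only come from four sources: $|\xi_0^n|\to\infty$; $s_n\to+\infty$; $s_n\to 0$; or, when $|\xi_0^n|>1$, $s_n\to T_{\xi_0}$ or the limit landing on $|\xi_0|=1$ with $s$ near the period. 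I will rule out each of these in turn.

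\textbf{Step 1: $s_n$ is bounded and bounded away from $0$.} By the arc-length parametrization, $s_n=\ell(\gamma_n|_{[0,s_n]})\ge d_\Aa(\e,F(\xi_0^n,\phi_0^n,s_n))$. The right-hand side converges to $d_\Aa(\e,p)$, which is positive because $p\notin\overline{\mathrm{Max}}\ni\e$. Hence $\liminf s_n>0$.

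For the upper bound, I will use the projection. By \eqref{lhyphor} we have $s_n=\ell_{\rm hyp}(\pi\gamma_n)$, while the endpoint $(\lambda_n,t_n)$ stays in a compact subset of $\bH^1_\C$. For $\tau_0\neq0$, Remark \ref{rem: proj} says that $\pi\gamma_n$ is an arc of a Euclidean circle. I will bound its hyperbolic length using the arc-length formula \eqref{arclength}, $s=\int_{\phi_0}^{\phi}du/(\sin u-\xi_0)$. In the regime $|\xi_0|\le1$, the only way to get $s\to\infty$ is for $\phi(s)$ to approach the equilibrium $\phi^*$ with $\sin\phi^*=\xi_0$. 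But then $\lambda(s)=(\sin\phi-\xi_0)/\tau_0\to0$, so the projection tends to the ideal boundary. That would contradict $\lambda_n$ being bounded below, unless $\tau_0^n\to0$ at a comparable rate. I will handle this degenerate joint limit using the explicit formulas of Theorem \ref{thm: Geodesics of AA}, cases (1)–(2) and \eqref{eq: phi 1}. In case (2) we have $\lambda=e^{\eta_0 s}$, so $\lambda$ bounded together with $s\to\infty$ forces $\eta_0\to0$, and then $|t|$ or $|a|=|\xi_0|s$ blows up.

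In the regime $|\xi_0|>1$ we have $s<T_{\xi_0}=2\pi/\sqrt{\xi_0^2-1}$, which is bounded unless $|\xi_0^n|\to1$. In that case I will treat the limit in the same way as the $|\xi_0|\le 1$ regime.

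\textbf{Step 2: $\xi_0^n$ is bounded.} If $|\xi_0^n|\to\infty$, then $T_{\xi_0^n}\to0$, and so $s_n\to0$, contradicting Step 1.

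\textbf{Step 3: the limit lies in $X$.} After Steps 1 and 2, a subsequence converges to some $(\xi_0,\phi_0,s)$ with $s\in(0,\infty)$. Assume $|\xi_0|>1$ and $s=T_{\xi_0}$. Then by continuity of $\exp_\e$ and \eqref{qpoint}, $p=\exp_\e(\xi_0,\phi_0,T_{\xi_0})\in\overline{\mathrm{Max}}$, contradicting $p\in Y$. Otherwise, the limit point belongs to $X$ (which includes the closed slab $|\xi_0|\le1$). This proves properness.

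The main obstacle is Step 1, in the regime $|\xi_0^n|\to1$ or $|\xi_0^n|\le1$ with $\tau_0^n\to0$. There, long geodesics spiral towards the equilibrium of $\dot\phi=\sin\phi-\xi_0$. I will need uniform estimates showing that either $\lambda_n\to0$ or $\infty$, or $|a_n|=|\phi_n-\phi_0^n+\xi_0^ns_n|\to\infty$. My first attempt will be the observation that $|a_n-\xi_0^ns_n|\le 2\pi$ whenever $\phi$ does not complete a rotation, and that $t_n$ stays controlled. Together these give $s_n$ bounded whenever $|\xi_0^n|$ stays away from $0$. The small-$|\xi_0|$ case then reduces to the hyperbolic-length bound above.
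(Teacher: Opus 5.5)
Most of your argument is sound, and in places sharper than the paper's. The lower bound $s_n\ge d_\Aa(\e,F(x_n))\to d_\Aa(\e,p)>0$ is correct, and so are Steps 2 and 3. Above all, the identity $a=\phi(s)-\phi_0+\xi_0 s$ with $|\phi(s)-\phi_0|<2\pi$ on all of $X$ gives $|\xi_0^n|\,s_n\le |a_n|+2\pi$. This bounds $s_n$ whenever $|\xi_0^n|$ stays away from $0$, including the regime $|\xi_0^n|\to1$ where the periods blow up.

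\textbf{The gap.} The remaining regime $\xi_0^n\to0$ is not proved. You say it ``reduces to the hyperbolic-length bound above'', but no such bound was established. That paragraph only argues, for a \emph{fixed} covector, that $s\to\infty$ forces $\phi(s)\to\phi^*$ and hence $\lambda\to0$. It explicitly postpones the joint limit $\tau_0^n\to0$, which is exactly where a uniform estimate is needed. The dichotomy is also incomplete along sequences. $s_n$ can become large because $\phi_0^n$ sits near the \emph{other} root of $\sin u=\xi_0^n$, so $\tau_0^n\to0$; then $\lambda_n\to+\infty$ rather than $0$. The paper's Case II is itself only this pointwise assertion, so it does not supply the missing uniformity either.

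\textbf{A way to close it.} Let the endpoints satisfy $\lambda_n\in[\lambda_{\min},\lambda_{\max}]$ and $|t_n|,|a_n|\le M$, and suppose $|\xi_0^n|\le\frac12$ (otherwise your $a$-estimate applies).

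If $\tau_0^n=0$, then $\lambda_n=e^{\eta_0^n s_n}$ with $|\eta_0^n|\ge\frac{\sqrt3}{2}$, so $s_n$ is bounded.

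If $\tau_0^n>0$ (use $j$ for $\tau_0^n<0$), $\phi$ increases inside the component of $\{\sin u>\xi_0^n\}$ containing $\phi_0^n$. This is an interval of length $\pi-2\arcsin\xi_0^n\le\frac{4\pi}{3}$ on which $\sin$ is unimodal. So, with $\phi_n:=\phi(s_n)$, one has $\sin u-\xi_0^n\ge\tau_0^n\min(1,\lambda_n)$ for all $u\in[\phi_0^n,\phi_n]$.

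By \eqref{AAGeod3}, $\sin\phi_n-\sin\phi_0^n=\tau_0^n(\lambda_n-1)$ and $\cos\phi_0^n-\cos\phi_n=\tau_0^n t_n$. Hence $2\sin\frac{\phi_n-\phi_0^n}{2}=\tau_0^n\sqrt{(\lambda_n-1)^2+t_n^2}\le C\tau_0^n$. Since $0<\phi_n-\phi_0^n\le\frac{4\pi}{3}$, this gives $\phi_n-\phi_0^n\le C'\tau_0^n$. Therefore, by \eqref{arclength},
\[
s_n=\int_{\phi_0^n}^{\phi_n}\frac{du}{\sin u-\xi_0^n}\le\frac{\phi_n-\phi_0^n}{\tau_0^n\min(1,\lambda_{\min})}\le\frac{C'}{\min(1,\lambda_{\min})}.
\]
Geometrically, the hyperbolic length of the projected hypercycle arc is at most its Euclidean length divided by the minimum of $\lambda$ on the arc. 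That minimum is attained at an endpoint, and the Euclidean arc is comparable to its chord. With this, Step 1 is complete and the rest of your proof goes through.
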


\begin{proof}
    Let $K \subset Y$ be compact. We consider sequences $(\xi_0^n,\phi_0^n)_{n\in\mathbb N}\subset \R\times \mathbb{S}^1$ and $(s_n)_{n\in\mathbb N}\subset\R_+$ such that $(\xi_0^n,\phi_0^n,s_n)_{n\in\mathbb N} \in F^{-1}(K)$ and denote 
    \begin{align}\label{eq: cpt cond}
        \gamma(s_n;\xi_0^n,\phi_0^n)=F(\xi_0^n,\phi_0^n,s_n)  \in K,\quad\forall n\in\mathbb{N},
    \end{align}
    where $\gamma(\cdot\,;\xi_0^n,\phi_0^n)$ corresponds to the normal extremal with initial covector $(\xi_0^n,\phi_0^n)$.
    Due to compactness of $K$ we extract a convergent subsequence $\gamma(s_{n_k};\xi_0^{n_k},\phi_0^{n_k})$ in $Y$ and indicate the limit with $q \in K$. 
    
    We aim at finding bounds for $\phi_0^{n_k}$, $s_{n_k}$ and $\xi_0^{n_k}$ for all $k\in\mathbb N$. We observe that
$(\phi_0^{n_k})_{k\in\mathbb{N}} \in \mathbb{S}^1$ is clearly bounded. 
    
    We notice that $(s_n)_{n\in\mathbb{N}}$ is also bounded. Indeed, we have the following case distinction. \\
    \noindent{\textit{Case I:}} If $|\xi_0|>1$, then $0< s<T_{\xi_0}.$ Here it holds that 
    \begin{align*}
        \gamma (s;\xi_0,\phi_0) \to \left(1,0,a \right) \in \text{Max},\quad \text{as } s\to T_{\xi_0}.
    \end{align*}
    If we had a subsequence $(s_{n_k})_{k\in\mathbb N}$ of $(s_n)_{n\in\mathbb N}$ with $s_{n_k} \to T_{\xi_0^{n_k}}$, the corresponding endpoints $\gamma(s_{n_k};\xi_0^{n_k},\phi_0^{n_k})$ would converge to Maxwell points, contradicting that $\gamma(s_{n_k};\xi_0^{n_k},\phi_0^{n_k})\to q \in K \subset Y$. 
    
    Now, suppose there is a subsequence $(s_{n_k})_{k\in\mathbb N}$ with $s_{n_k}\to 0.$ Here it holds instead that \begin{align*}
        \gamma (s;\xi_0,\phi_0) \to \e \in \overline{\text{Max}} , \quad \text{as } s\to 0.
    \end{align*}
Therefore, for sufficiently large $k\in\mathbb N$ there exists $\delta >0$, independent of $k$, such that $ s_{n_k} \ge \delta.$

 \noindent{\textit{Case II:}}  If $|\xi_0^{n_k}|\leq1$, the Maxwell time is $T_{\text{Max}}=+\infty$. As $s\to + \infty$, at least one component of $\gamma(\cdot\,;\xi_0^{n_k},\phi_0^{n_k})$, given by $\lambda (\cdot),|t(\cdot)|,|a(\cdot)|$, becomes unbounded. In this way $\gamma (s_{n_k};\xi_0^{n_k},\phi_0^{n_k})$ would escape compact sets for large $k$, contradicting \eqref{eq: cpt cond}. 

 Now, we argue by contradiction to bound $(\xi_0^{n})_{n\in\mathbb{N}}$. Let $(\xi_0^{n_k})_{k\in\mathbb{N}}$ be a subsequence such that $|\xi_0^{n_k}| \to +\infty$. Then, for bounded $s$, at least one of $\lambda (s),|t(s)|,|a(s)|$ becomes again unbounded and it contradicts \eqref{eq: cpt cond}. 

To sum up, $(\xi_0^n,\phi_0^n,s_n)_{n\in\mathbb{N}}$ is contained in a closed ball of $\mathbb{R}^3,$ and it has a convergent subsequence $(\xi_0^{n_k},\phi_0^{n_k},s_{n_k})_{
k\in\mathbb{N}}$ with limit $(\xi_0^*,\phi_0^*,s^*).$
Continuity implies that for $
        (\xi_0^{n_k},\phi_0^{n_k},s_{n_k}) \to (\xi_0^*,\phi_0^*,s^*)$ we have
        $$F(\xi_0^{n_k},\phi_0^{n_k},s_{n_k}) = \gamma (s_{n_k};\xi_0^{n_k},\phi_0^{n_k})\to F(\xi_0^*,\phi_0^*,s^*)= q.$$
    We will show that $(\xi_0^*,\phi_0^*,s^*) \in F^{-1}(K),$ i.e. $s^* \in (0,T_{\text{Max}}(\xi_0^*,\phi_0^*))$.
    
    If $s^* =0$, we have $F(\xi_0^*,\phi_0^*,s^*)=(1,0,0)$, which does not lie in the image of $F$ (since $s>0$ in $X$), thus contradicting that $\gamma (s_{n_k};\xi_0^{n_k},\phi_0^{n_k}) \to q.$
    
    If $s^*=T_{\xi_0^*}$ (for $\xi_0^*<-1$ without loss of generality), then we have $
        F(\xi_0^*,\phi_0^*,s^*) \in \text{Max} \subset \overline{\text{Max}}$, thus
    contradicting that $q\in K \subset Y= \Aa \setminus \overline{\text{Max}} $.
\end{proof}

\begin{thm}\label{Thm: cut-time}
    The cut time of $\gamma:\R_+\to\Aa$ with initial data $\gamma(0)=\e$ and $(\xi_0,\phi_0)\in\R\times\mathbb{S}^1$ is 
    \begin{align}\label{eq: cut time}
        T_{\mathrm{cut}} &=\begin{cases}
            \frac{2\pi}{\sqrt{\xi_0^2-1}}, & \text{if }|\xi_0|>1 \\
            +\infty, & \text{if }-1\leq \xi_0 \leq 1.
        \end{cases}
    \end{align}
\end{thm}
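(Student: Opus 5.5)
The upper bound comes directly from the Maxwell analysis. Since a normal trajectory stops being a geodesic after a Maxwell point, we have $T_{\mathrm{cut}}\leq T^1_{\mathrm{Max}}$. Combined with \eqref{MAXbound} and Proposition \ref{lem: ang period}, this gives $T_{\mathrm{cut}}\leq \frac{2\pi}{\sqrt{\xi_0^2-1}}$ when $|\xi_0|>1$. When $|\xi_0|\leq 1$ there is nothing to bound. The real work is the reverse inequality. For this I will show that every $\gamma_{\xi(0)}$ restricted to $[0,T_{\xi_0})$, with $T_{\xi_0}=+\infty$ when $|\xi_0|\le 1$, is length minimizing. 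This will follow from applying Theorem \ref{Hadamard} to the map $F=\exp_\e|_X:X\to Y$ of \eqref{eq: F}.

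I will check the four hypotheses of Theorem \ref{Hadamard} in turn. Both $X$ and $Y$ are $3$-dimensional. $Y=\Aa\setminus\{(1,0,a):a\in\R\}$ is connected: it is $\R^3$ minus a line, hence homotopy equivalent to $\mathbb{S}^1$ and with $\pi_1(Y)\cong\mathbb{Z}$. $X$ is connected, since it is the union over $\xi_0$ of the slabs $\{\xi_0\}\times\mathbb{S}^1\times(0,T_{\xi_0})$, and $T_{\xi_0}$ varies continuously into $+\infty$ as $|\xi_0|\to1^+$. Moreover $X\cong \R\times\mathbb{S}^1\times(0,1)$ after reparametrizing $s$, so $\pi_1(X)\cong\mathbb{Z}$. $F$ is well defined by Lemma \ref{Lem: WD} and proper by Lemma \ref{PropernessOfF}. $F$ is a local diffeomorphism because $J\neq0$ on $X$. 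This follows from the sign analysis in the proof of Proposition \ref{conj}, where $J>0$ for $0<s<T_{\xi_0}$ and for all $s>0$ when $|\xi_0|\le1$. The degenerate covectors with $\sin\phi_0=\xi_0$ need a direct check using cases (1)--(2) of Theorem \ref{thm: Geodesics of AA}. Hence $F$ is a finite-sheeted covering.

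Next I will show that $F$ is a diffeomorphism, i.e.\ that it has one sheet. By the index formula, the number of sheets equals $[\pi_1(Y):F_*\pi_1(X)]$, so I will compute $F_*$ on the generator of $\pi_1(X)$. Take the loop $\phi_0\mapsto(\xi_0,\phi_0,s_0)$ for fixed $\xi_0\in(-1,1)$ and small $s_0$. Its image is a small sub-Riemannian sphere-like loop around $\e$. I expect it to wind exactly once around the removed line $\{(1,0,a)\}$: for small $s_0$ the projection $(\lambda,t)\approx(1+s_0\cos\phi_0,\,s_0\sin\phi_0)$ encircles $(1,0)$ once. So $F_*$ is an isomorphism $\mathbb{Z}\to\mathbb{Z}$ and $F$ is a one-sheeted covering, hence a diffeomorphism. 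This step, computing the degree rigorously together with the connectivity and topology bookkeeping of $X$, is the main obstacle. As a fallback, I would count preimages of a single convenient point, for instance a point on the $\lambda$-axis $(e^{s},0,0)$, using the explicit formulas and Remark \ref{rem: proj}, and show the fiber has exactly one element.

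Finally I will conclude minimality. Take $q=\gamma(s_1)\in Y$ with $s_1<T_{\xi_0}$. A minimizing geodesic from $\e$ to $q$ exists since $d_\Aa$ is geodesic, and it is normal because there are no nontrivial abnormals. Its length $s'$ satisfies $s'\le s_1<T_{\xi_0}$. If $s'$ were at least the period of its own covector $\xi_0'$, it would have passed a Maxwell point and would not be minimizing, so $(\xi_0',\phi_0',s')\in X$. Injectivity of $F$ then forces $(\xi_0',\phi_0',s')=(\xi_0,\phi_0,s_1)$, so $\gamma|_{[0,s_1]}$ is minimizing. For points with $\gamma(s)\in\overline{\mathrm{Max}}$ at $s<T_{\xi_0}$ (which does not occur by Lemma \ref{Lem: WD}), or at the endpoint $s_1\to T_{\xi_0}$, minimality extends by continuity of $d_\Aa$. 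Therefore $T_{\mathrm{cut}}\geq T_{\xi_0}$, and together with the upper bound this gives \eqref{eq: cut time}.
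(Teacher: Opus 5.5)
Your proposal is correct and follows the paper's route: the upper bound comes from the Maxwell points at $T_{\xi_0}$, and the lower bound comes from applying Theorem~\ref{Hadamard} to $F=\exp_\e|_X:X\to Y$ to get a one-sheeted covering, hence a global diffeomorphism. Two of your steps supply justification the paper only asserts. Your winding-number computation of $F_*$ on the loop $\phi_0\mapsto(\xi_0,\phi_0,s_0)$ fills in the index claim, which the paper deduces merely from $\pi_1(X)\cong\pi_1(Y)\cong\mathbb{Z}$. Your argument that a minimizer to $q\in Y$ must come from $X$ fills in the final deduction. One small correction: the boundary case $s'=T_{\xi_0'}$ is excluded because it would put $q$ in $\overline{\mathrm{Max}}$, contradicting $q\in Y$, not because the curve has passed a Maxwell point.
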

\begin{proof}
By choosing $X,Y$ and $F$ as in $\eqref{eq: X,Y}$ and $\eqref{eq: F}$ we have that Lemma \ref{Lem: WD} assures us that $F:X\to Y$ is a well-defined map. We are going to apply Theorem \ref{Hadamard} after having verified properties (1)-(4):
\begin{itemize}
    \item Since $X,Y$ are open sets of three-dimensional manifolds, we have $\dim X=\dim Y=3$.
    \item It is trivial by their definitions that $X$ and $Y$ are connected.   
    \item $F:X\to Y$ being a local diffeomorphism follows after applying the Inverse function theorem to Proposition \ref{conj}. Indeed,  we have that  
    $ J\neq 0,$ for all $ s\in \left(0,T_{\xi_0}\right)$,
where $J=\det\left( D\,\exp_\e\right)$ is calculated on $(\xi_0,\phi_0,s)\in X$ and with respect to the bases $\left\{ \partial_{\xi_0},\partial_{\phi_0},\partial_s\right\}\rvert_{(\xi_0,\phi_0,s)}$ of ${\rm T}_{(\xi_0,\phi_0,s)}C_\e\times \R_+$ and $\left\{U,V,W\right\}\rvert_{\gamma(s)}$ of ${\rm T}_{\gamma(s)}\Aa$.
\item $F$ is proper by Lemma \ref{PropernessOfF}.
\end{itemize}

Therefore $F$ is a finite-sheeted covering map. By definition of $X$ and $Y$, the following homeomorphisms hold:
\begin{align*}
    X&\cong \R \times\mathbb{S}^1\times (0,+\infty),\\
    Y&\cong\left(\R^2\setminus\left\{(0,0) \right\}\right)\times\R.
\end{align*}
A basic fact on fundamental groups (see \cite[Proposition 1.12]{Hatcher}) yields
$$\pi_1\left(X\right)\cong\pi_1(Y)\cong \mathbb{Z}.$$
We denote by $F_*$ the induced homomorphism $F_* : \mathbb{Z}\longrightarrow \mathbb{Z}$. Following the preliminary observation from Section \ref{Preliminaries}, we have that the number of sheets is equal to the index
$$[\mathbb{Z} : F_*\mathbb{Z}]=[\mathbb{Z} : \mathbb{Z}]=1.$$
Since $X$ is connected, the cardinality of the fibers of $F$ is constant. The latter fact combined with the definition of number of sheets yields  $|F^{-1}(y)|=[\mathbb{Z} : F_*\mathbb{Z}]=1$, for all $y\in Y$. Hence, $F$ is bijective as a covering map with one sheet. Combining with the fact that $F$ is a local diffeomorphism we obtain that $F : X\to Y$ is a global diffeomorphism. Therefore the cut time is the first Maxwell time, which is given by \eqref{period}.
\end{proof}

\section{Surfaces generated by geodesics}\label{sec-ccflask}
\subsection{The CC-unit sphere}
Let $\e\in\Aa$ and $r>0$, the \textit{CC sphere of radius $r$ centered at $\e$} is the set
\begin{equation*}
    \mathcal{S}_\Aa(\e,r):=\{p\in\Aa:d_\Aa(\e,p)=r\}.
\end{equation*}
The main result of this section is the following
\begin{thm}\label{thm: CCSphere}
The CC-unit sphere $\mathcal{S}_\Aa(\e,1)$ is parametrized by initial data $(\eta_0,\tau_0,\xi_0)$ which satisfies $\eta_0^2+(\xi_0+\tau_0)^2=1$. We set $\phi_0\in\mathbb{S}^1$ such that $\cos\phi_0=\eta_0$, $\sin\phi_0=\xi_0+\tau_0$ and define:
\begin{align*}
    \Xi&=\frac{1 +\beta_1 \tanh \left(\frac{1}{2} \beta_1 (c_1+1)\right)}{\xi_0},\\
    \Psi&=\frac{1 -\beta_2 \tan \left(\frac{1}{2} \beta_2 (c_2+1)\right)}{\xi_0},\\
    \Omega&=-\frac{3+c_3}{1+c_3},
\end{align*}
with $\beta_1=\sqrt{1-\xi_0^2}$ and $\beta_2=\sqrt{\xi_0^2-1} .$
The sphere is given by the surface patches $(\lambda, t,a)$ in the following case distinction.\\
If $\tau_0\neq 0$,\\
\noindent{\it  1. if} $|\xi_0| < 1$: 
then
\begin{align*}
\lambda &= \frac{2\Xi   - \xi_0(1+\Xi^2)}{(1+\Xi^2)(\sin\phi_0 - \xi_0)} , \\
t &= \frac{(1+\Xi^2)\cos \phi_0-(1-\Xi^2)}{(1+\Xi^2)(\sin \phi_0 -\xi_0)},\\
a &= 2 \arctan \Xi+ \xi_0 - \phi_0, 
\end{align*}
for $\xi_0\neq 0$, and
\begin{align*}
\lambda &=\frac{\pm2ec_1'   - \xi_0(1+(ec_1')^2)}{(1+(ec_1')^2)(\sin\phi_0 - \xi_0)} , \\
t &=  \frac{(1+(ec_1')^2)\cos \phi_0-(1-(ec_1')^2)}{(1+(ec_1)^2)(\sin \phi_0 -\xi_0)},\\
    a &= 2 \arctan (\pm ec_1')+ \xi_0 - \phi_0,
\end{align*}
for $\xi_0=0$,

\noindent{\it 2. if} $\xi_0 < -1$:
then
\begin{align*}
\lambda &= \frac{2\Psi   - \xi_0(1+\Psi^2)}{(1+\Psi^2)(\sin\phi_0 - \xi_0)}, \\
t &=  \frac{(1+\Psi^2)\cos \phi_0-(1-\Psi^2)}{(1+\Psi^2)(\sin \phi_0 -\xi_0)},\\
a &= 2 \arctan \Psi+ \xi_0 - \phi_0,
\end{align*}
with $1< \frac{2\pi}{\sqrt{\xi_0^2-1}},$

\noindent{\it 3. if} $\xi_0 = -1$:
then
\begin{align*}
\lambda &= \frac{2\Omega   +(1+\Omega^2)}{(1+\Omega^2)(\sin\phi_0 +1)} , \\
t &=  \frac{(1+\Omega^2)\cos \phi_0-(1-\Omega^2)}{(1+\Omega^2)(\sin \phi_0 +1)},\\
a &= 2 \arctan \Omega - 1 - \phi_0.
\end{align*}

The constants $c_1,c_1', c_2, c_3 \in \mathbb{R}$ are uniquely determined by $\phi_0$. The case $\xi_0\geq1$ is recovered by the action of the reflection $j:\Aa\to\Aa$.

If $\tau_0=0$, we have the following cases 
\begin{align*}
    \gamma (1)&=(1,\xi_0,\xi_0), \quad \text{if } \eta_0=0,\\
    \gamma (1)&= \left(e^{\eta_0},\frac{\xi_0}{\eta_0}(e^{\eta_0}-1),\xi_0 \right), \quad \text{if } \eta_0\neq 0.
\end{align*}
The poles of $\mathcal{S}_\Aa(\e,1)$ are the points $P_N=\left(1,0,2\pi-\sqrt{1+4\pi^2}\right)$ and $P_S=\left(1,0,-2\pi+\sqrt{1+4\pi^2}\right)$.

\end{thm}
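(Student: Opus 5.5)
The plan is to identify $\mathcal{S}_\Aa(\e,1)$ with the image of $\{(\xi_0,\phi_0,1)\}$ under $\exp_\e$, restricted to those covectors whose cut time exceeds $1$. This requires two facts. First, every point $p$ with $d_\Aa(\e,p)=1$ is the endpoint at time $1$ of some arc-length parametrized geodesic from $\e$. Since $d_\Aa$ is geodesic and there are no nontrivial abnormal extremals, that geodesic is a normal extremal $\gamma_{\xi(0)}$ with $H(\xi(0))=\tfrac12$, so $p=\exp_\e(\xi_0,\phi_0,1)$ with $T_{\rm cut}(\xi_0,\phi_0)\ge 1$. Second, conversely, if $1\le T_{\rm cut}$ then $\gamma|_{[0,1]}$ is a geodesic and hence $d_\Aa(\e,\gamma(1))=1$. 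By Theorem~\ref{Thm: cut-time}, the condition $T_{\rm cut}\ge1$ is automatic for $|\xi_0|\le1$, while for $|\xi_0|>1$ it reads $1\le 2\pi/\sqrt{\xi_0^2-1}$. This is exactly the constraint recorded in case 2 of the statement (with the endpoint equality giving the poles).

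Next I would evaluate the explicit formulas of Theorem~\ref{thm: Geodesics of AA} at $s=1$. In case (3), $\tau_0\neq0$, I write $x=\tan(\phi(1)/2)$ and use $\sin\phi=2x/(1+x^2)$, $\cos\phi=(1-x^2)/(1+x^2)$ in \eqref{AAGeod3}, with $\tau_0=\sin\phi_0-\xi_0$ and $\eta_0=\cos\phi_0$. By \eqref{eq: phi 1}, \eqref{eq: phi 2}, \eqref{eq: phi 3}, \eqref{eq: phi 4}, $x$ equals $\Xi$, $\pm e c_1'$, $\Psi$ and $\Omega$ respectively. Substituting yields the stated $\lambda,t$, and $a=2\arctan x-\phi_0+\xi_0$. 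One point needs care: the branch of $\phi(1)$ versus $2\arctan x$. For $|\xi_0|\le1$ the proof of Proposition~\ref{lem: ang period} shows $\phi$ stays in an interval of length $<2\pi$, so the principal branch is fine up to a constant lift. For $\xi_0<-1$ with $1<T_{\xi_0}$, $\phi$ does not complete a rotation, so again a single continuous branch suffices; I would fix it by continuity in $\xi_0$. Case $\tau_0=0$ follows directly from items (1)–(2) of Theorem~\ref{thm: Geodesics of AA} at $s=1$. Cases with $\xi_0\ge1$ (or $\tau_0<0$) follow by applying the isometry $j$, which preserves $\mathcal{S}_\Aa(\e,1)$.

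Finally, for the poles: the only points of $\mathcal{S}_\Aa(\e,1)$ on the axis $\overline{\rm Max}=\{(1,0,a)\}$ are reached at time exactly $T_{\xi_0}=1$. Indeed, by Lemma~\ref{Lem: WD} no point of $F(X)$ lies on the axis, and $\lambda=1,t=0$ forces $\phi(1)=\phi_0+2k\pi$. The condition $2\pi/\sqrt{\xi_0^2-1}=1$ gives $\xi_0=\mp\sqrt{1+4\pi^2}$. Then \eqref{qpoint} gives $a=2\pi+\xi_0\cdot 1=2\pi-\sqrt{1+4\pi^2}$ for $\xi_0<-1$, and the reflected value for $\xi_0>1$, i.e.\ $P_N,P_S$. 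At these points the whole circle $\phi_0\in\mathbb{S}^1$ collapses to one point, which is why they are called poles.

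The main obstacle is the branch and bookkeeping issue in the second step. Matching the integration constants $c_1,c_2,c_3,c_1'$ (determined by $\phi_0$) with the correct lift of $\arctan$ so that $a$ is continuous across the patches is delicate, especially through the transition $\xi_0\to-1$ and $\tau_0\to0$. I would handle it by checking continuity of the patches against the Remark on continuous dependence on initial data.
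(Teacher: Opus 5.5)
Your proposal is correct and follows essentially the paper's route. Realize $\mathcal{S}_\Aa(\e,1)$ as the time-$1$ endpoints of normal extremals with $T_{\rm cut}\ge 1$, substitute the half-angle value $\tan(\phi(1)/2)$ from Theorem~\ref{thm: Geodesics of AA} into \eqref{AAGeod3}, restrict to $|\xi_0|\le\sqrt{1+4\pi^2}$ via Theorem~\ref{Thm: cut-time}, and read off the poles from \eqref{qpoint} at $T_{\xi_0}=1$.

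You are more careful than the paper's proof, which does not justify either inclusion of that identification or that the axis $\{(1,0,a)\}$ meets the sphere only at the poles, and which ignores the lift of $2\arctan$ in the $a$-coordinate. That branch issue is real: for fixed $\phi_0\in(-\pi,\pi)$ and $\xi_0\downarrow-\sqrt{1+4\pi^2}$, the literal expression $2\arctan\Psi+\xi_0-\phi_0$ tends to $-\sqrt{1+4\pi^2}$ instead of the pole height $2\pi-\sqrt{1+4\pi^2}$, so your plan to fix the branch by continuity is needed.
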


\begin{proof}
The Carnot-Carath\'eodory sphere of radius $1$ centered at the identity $\e = (1,0,0)$ is defined as the locus of endpoints of all geodesics evaluated at $s=1$.

By Theorem \ref{thm: Geodesics of AA}, for $\tau_0 \neq 0$, the arc-length parametrized geodesics $\gamma(s) = (\lambda(s), t(s),a(s))$ are given by:
\begin{align*}
\lambda(s) &= \frac{\sin\phi(s)-\xi_0}{\sin\phi_0-\xi_0} \\
t(s) &= \frac{\cos\phi_0-\cos\phi(s)}{\sin\phi_0-\xi_0}\\
a(s) &= \phi(s) - \phi_0 + \xi_0 s
\end{align*}
where we have applied the initial covector relation $\tau_0 = \sin\phi_0 - \xi_0$.

To construct the unitary sphere, we evaluate these curves at $s=1$, yielding the spatial coordinates $(\lambda, t,a) = (\lambda(1), t(1),a(1))$. Substituting the explicit closed-form solutions for $\phi(1)$ derived in Theorem \ref{thm: Geodesics of AA} for the three regimes $|\xi_0| < 1$, $\xi_0 < -1$, and $\xi_0 = -1$ generates a family of surface patches.

The latter collection has to be compared with the study of cut times from Theorem \ref{Thm: cut-time} to give the surface patch parametrization presented. 
In detail, from $s=1\leq T_{\mathrm{cut}}$ combined with (\ref{eq: cut time}) for $|\xi_0|>1$, we find $|\xi_0|\leq \sqrt{1+4\pi^2}$. Following Lemma \ref{lem: ang period}, the poles of the unit Carnot-Carath\'eodory sphere are the points $\gamma(T_{\xi_0})$ and its reflection by $j$, where $\xi_0=\sqrt{1+4\pi^2}$. A straightforward computation yields 
\begin{align*}
    P_N=\left(1,0,2\pi-\sqrt{1+4\pi^2}\right), \qquad
    P_S=j(P_N)=\left(1,0,-2\pi+\sqrt{1+4\pi^2}\right).
\end{align*}
\end{proof} 
\begin{rem}
    The poles of the CC-unit sphere $\mathcal{S}_\Aa(\e,1)$ are the intersection of $\mathcal{S}_\Aa(\e,1)$ with the $a$-axis translated by $\e$.
\end{rem}

Keeping in mind Theorem \ref{Thm: cut-time}, one may derive an analogous statement to Theorem \ref{thm: CCSphere} for general CC-spheres of radius $r>0$.
In order to visualize the profile of a CC-sphere at different radii we plot $\mathcal{S}_\Aa(\e,1)$ and $\mathcal{S}_\Aa(\e,5)$.

\begin{figure}[H]
\centering
\begin{subfigure}{0.65\textwidth}
\includegraphics[width=1\textwidth]{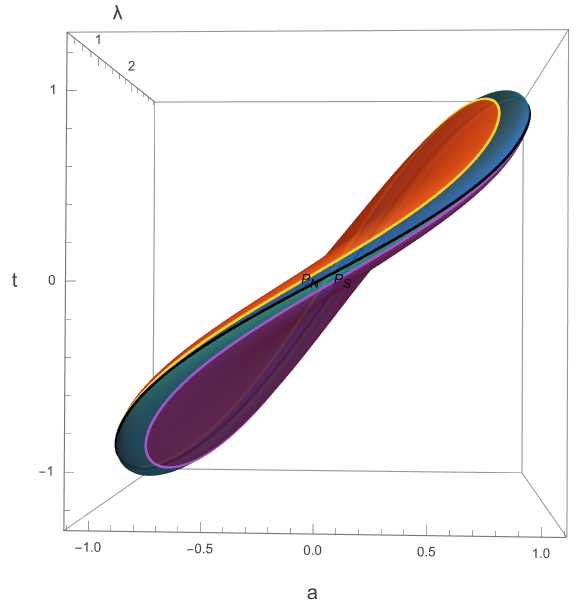}
\end{subfigure}
\begin{subfigure}{0.495\textwidth}
    \includegraphics[width=1\textwidth]{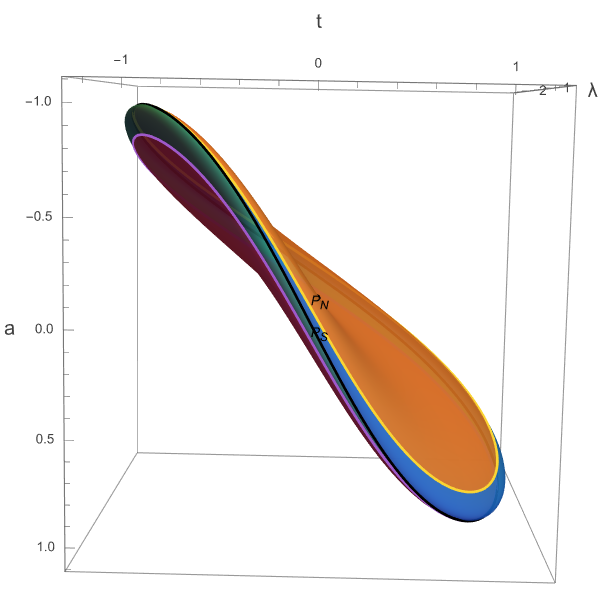}
\end{subfigure}
\begin{subfigure}{0.495\textwidth}
    \includegraphics[width=1\textwidth]{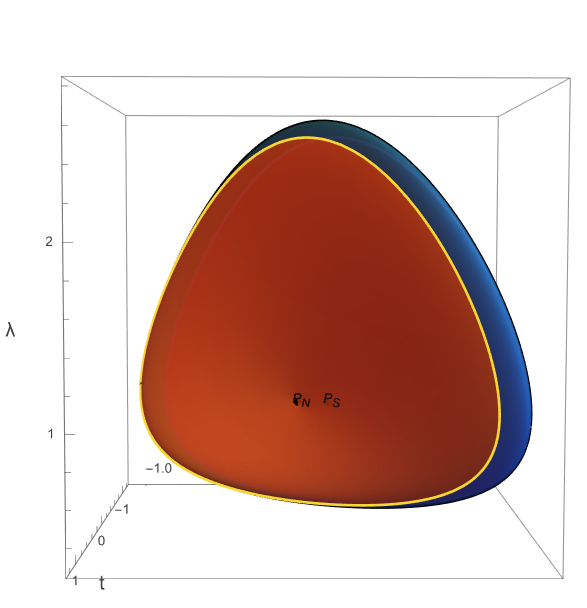}
\end{subfigure}
\caption{Plot of $\mathcal{S}_{CC}(\e,1)$. \\ The blue region corresponds to the case $|\xi_0|<1$ with $\tau_0>0$, \\
the teal region corresponds to the case $|\xi_0|<1$ with $\tau_0<0$,\\
the orange region corresponds to the case $\xi_0<-1$,\\ the magenta region corresponds to the case $\xi_0>1$, \\ the yellow and purple curves correspond to the critical cases for $\xi_0=\pm 1$ \\
and the black curve corresponds to the case $\tau_0=0$.}\label{pic: CC-sphere r=1}
\end{figure}

\begin{figure}[H]
\centering
\begin{subfigure}{0.65\textwidth}
\includegraphics[width=1\textwidth]{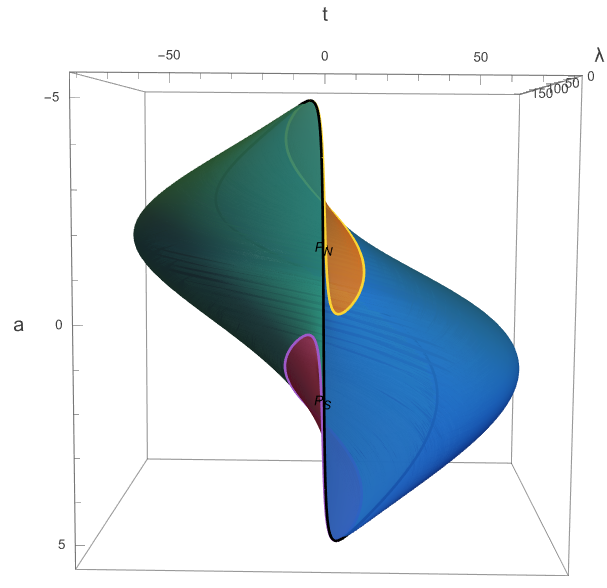}
\end{subfigure}
\begin{subfigure}{0.495\textwidth}
    \includegraphics[width=1\textwidth]{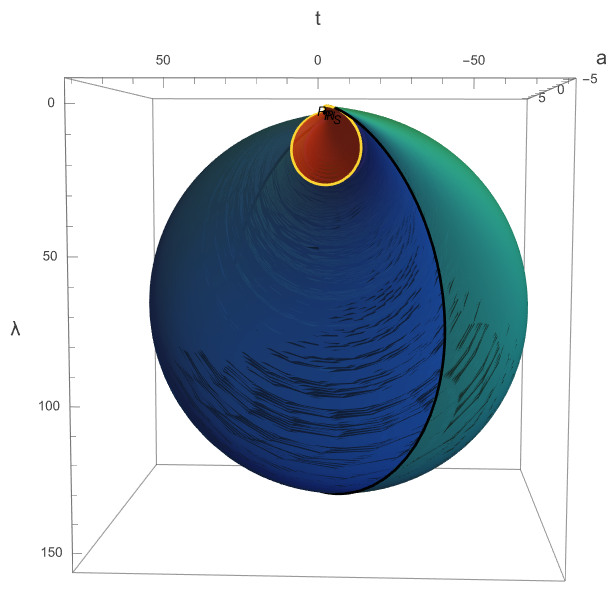}
\end{subfigure}
\begin{subfigure}{0.495\textwidth}
    \includegraphics[width=1\textwidth]{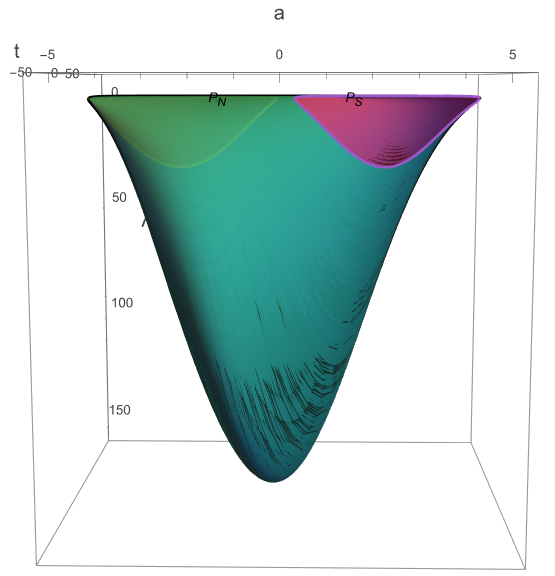}
\end{subfigure}
\caption{Plot of $\mathcal{S}_{CC}(\e,5)$. \\The blue region corresponds to the case $|\xi_0|<1$ with $\tau_0>0$, \\
the teal region corresponds to the case $|\xi_0|<1$ with $\tau_0<0$,\\
the orange region corresponds to the case $\xi_0<-1$,\\ the magenta region corresponds to the case $\xi_0>1$, \\ the yellow and purple curves correspond to the critical cases for $\xi_0=\pm 1$ \\
and the black curve corresponds to the case $\tau_0=0$.}\label{pic: CC-sphere r=5}
\end{figure}

\subsection{The flask}
In this section, we present a construction in $\Aa$ of the analogue of the Pansu sphere (bubble set) in the first Heisenberg group (see \cite{Pansu1} or \cite[Section 2.3]{CDPT}).

We need first to analyze the sub-Riemannian geodesics of $\Aa$ with respect to the canonical projection $\pi : \Aa \longrightarrow \bH_{\C}^1$.
In detail, let $D\pi : \rm{T}\Aa\longrightarrow \rm{T}\,\bH_{\C}^1$ be the tangent map which sends the horizontal orthonormal distribution $\mathcal{H}_\Aa=\text{span}\{U,V\}$ to the orthonormal frame
$$D\pi(U)=\lambda\partial_\lambda:=f_1, \quad D\pi(V)=\lambda\partial_t:=f_2$$
of the hyperbolic plane $(\bH_{\C}^1,g_{\text{hyp}})$. Let $\widetilde\gamma(s)=(\lambda(s),t(s))$ be a unit-speed curve in
$\bH_{\C}^1$.  With respect to the hyperbolic orthonormal frame $\{f_1,f_2\}$, we write
\begin{equation*}\label{eq:tangent-angle}
 \dot{\widetilde\gamma}(s)
 =\cos\Phi(s)f_1+\sin\Phi(s)f_2,
\end{equation*}
where $\Phi(s)$ is an angle function.
The coframe dual to $f_1,f_2$ is
$
 \omega_1=\frac{d\lambda}{\lambda},
 \omega_2=\frac{dt}{\lambda},
$
and the structure equations 
$$d\omega_1=\eta\wedge \omega_2, \quad d\omega_2=-\eta\wedge\omega_1$$
are satisfied.
Therefore, the connection form is $\eta=-\omega_2$, and the signed geodesic
curvature of $\widetilde\gamma$ is defined via \cite[Section 4.4.2]{ABB}:
\begin{equation}\label{kg}
 k_g^{\widetilde\gamma}(s)
 :=\dot\Phi(s)+\eta(\dot{\widetilde\gamma}(s))
 =\dot\Phi(s)-\sin\Phi(s).
\end{equation}

\begin{prop}\label{mom}
The momentum $\xi_0$ represents the signed curvature of the projection of a sub-Riemannian geodesic $\gamma : [0,T] \to \Aa$ (such that $\gamma(0)=\e$ and $\xi(0)=(\eta_0,\tau_0,\xi_0)$) to a curve $\widetilde\gamma$ on the hyperbolic plane, i.e.
\begin{equation}
     k_g^{\widetilde\gamma}=-\xi_0.
\end{equation}
\end{prop}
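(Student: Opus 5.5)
Since $D\pi(U)=f_1$ and $D\pi(V)=f_2$, the plan is to compare the angle $\Phi$ of the projected curve with the angular function $\phi$ of \eqref{eq: phi}. The first equation of \eqref{eq: Ham AA} gives $\dot\gamma=\cos\phi\,U+\sin\phi\,V$. Applying $D\pi$ yields
\[
\dot{\widetilde\gamma}(s)=\cos\phi(s)f_1+\sin\phi(s)f_2 .
\]
The frame $\{f_1,f_2\}$ is orthonormal and, by \eqref{horiz speed}, $\widetilde\gamma$ has unit hyperbolic speed. Hence we may take $\Phi=\phi$, unique up to an additive constant in $2\pi\mathbb{Z}$, which does not affect $\dot\Phi$ or $\sin\Phi$. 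Plugging this into \eqref{kg} gives
\[
k_g^{\widetilde\gamma}=\dot\phi-\sin\phi ,
\]
so the whole statement reduces to the identity $\dot\phi=\sin\phi-\xi_0$.

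For $\tau_0\neq0$ this identity is exactly the o.d.e. \eqref{eq: ode phi} in Theorem \ref{thm: Geodesics of AA}, giving $k_g^{\widetilde\gamma}=-\xi_0$ at once. To present it uniformly, we rederive it from the system \eqref{eq: ode_system_1}, which covers every case. We have $\sin\phi=h_2=\xi_0+\tau_0\lambda$ and $\cos\phi=h_1=\xi_1\lambda$. Differentiating the first relation and using $\dot\lambda=\lambda\cos\phi$ gives
\[
\cos\phi\,\dot\phi=\tau_0\lambda\cos\phi=(\sin\phi-\xi_0)\cos\phi ,
\]
that is, $\cos\phi\,(\dot\phi-\sin\phi+\xi_0)=0$.

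The main obstacle is dividing by $\cos\phi$ where it vanishes. To handle this, we also differentiate $\cos\phi=\xi_1\lambda$ using the fourth equation of \eqref{eq: ode_system_1}:
\[
-\sin\phi\,\dot\phi=\dot\xi_1\lambda+\xi_1\dot\lambda=-\lambda(\cos\phi\,\xi_1+\tau_0\sin\phi)+\xi_1\lambda\cos\phi=-\tau_0\lambda\sin\phi .
\]
This gives $\sin\phi\,(\dot\phi-\sin\phi+\xi_0)=0$. Since $\cos\phi$ and $\sin\phi$ never vanish simultaneously, the two relations together yield $\dot\phi=\sin\phi-\xi_0$ for all $s$. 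This includes the degenerate cases (1) and (2) of Theorem \ref{thm: Geodesics of AA}. In those cases $\tau_0=0$ and $\phi$ is constant with $\sin\phi=\xi_0$, so $k_g=-\sin\phi=-\xi_0$ directly.

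Finally, as a consistency check, for $\tau_0\ne0$ Remark \ref{rem: proj} says the projection is a Euclidean circle of radius $1/|\tau_0|$. The constant value $|\xi_0|$ of $k_g$ then matches the classical fact that hyperbolic circles, horocycles and hypercycles have curvature larger than, equal to, or less than $1$. The sign convention depends on the orientation fixed by $\eta=-\omega_2$ in \eqref{kg}.
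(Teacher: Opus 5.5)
Your proof is correct and follows the same route as the paper. You push $\dot\gamma=\cos\phi\,U+\sin\phi\,V$ forward by $D\pi$ to get $\Phi=\phi$, insert this into $k_g^{\widetilde\gamma}=\dot\Phi-\sin\Phi$, and use $\dot\phi=\sin\phi-\xi_0$. The one difference is that the paper simply cites the angle o.d.e., which it derived only for $\tau_0\neq0$, whereas you re-derive it from the Hamiltonian system so that it also covers $\tau_0=0$ and the zeros of $\cos\phi$; this tightens the argument without changing the approach.
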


\begin{proof}
Let $\widetilde\gamma:=\pi\circ \gamma$. Because of our analysis in Section \ref{sec: PMP}, we have
\begin{equation}\dot\gamma=\cos\phi \,U+\sin\phi \,V\Longrightarrow \dot{\widetilde{\gamma}}=D\pi_{\gamma}(\dot\gamma)=\cos\phi(s)f_1+\sin\phi(s)f_2.\label{Dpg}\end{equation}
By applying formula (\ref{kg}) combined with \eqref{eq: angle ode}, we have $ k_g^{\widetilde\gamma}=\dot\phi-\sin\phi=-\xi_0.$
\end{proof}

Theorem \ref{thm: Geodesics of AA}, combined with Remark \ref{rem: proj} and Proposition \ref{mom}, leads to the following (for the precise definitions of horocycles and hypercycles, i.e. equidistant curves, we refer the reader to \cite[Chapter 16]{Coxeter}).

\begin{cor}\label{corcurv}
Let $\gamma:I\to\Aa$ be a
sub-Riemannian geodesic, and let
$\widetilde{\gamma}=\pi\circ\gamma:I\to\mathbb \bH_{\C}^1$
be its projection onto the hyperbolic plane. Then,
$\widetilde{\gamma}$ represents one of the following cases, depending on the range of $\xi_0$:

\begin{enumerate}

\item If $\xi_0=0$, then $\widetilde{\gamma}$ is a hyperbolic
geodesic (i.e. a perpendicular line to the axis $\lambda=0$ or a semi-circle centered at the $t-$axis).

\item If $0<|\xi_0|<1$, then $\widetilde{\gamma}$ is a hypercycle.
If $r$ denotes its hyperbolic distance from the corresponding
geodesic, then $|\xi_0|=\tanh r$ and
$r=\operatorname{arctanh}|\xi_0|$.

\item If $|\xi_0|=1$, then $\widetilde{\gamma}$ is a horocycle.

\item If $|\xi_0|>1$, then $\widetilde{\gamma}$ is a hyperbolic
circle. Its hyperbolic radius $R$ satisfies
$|\xi_0|=\coth R$ and
$R=\operatorname{arccoth}|\xi_0|$.
\end{enumerate}

The sign of $\xi_0$ determines the orientation of the projected
curve, since $k_g^{\widetilde\gamma}=-\xi_0$.  
\end{cor}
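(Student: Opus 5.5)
The plan is to classify the projected curve $\widetilde\gamma=\pi\circ\gamma$ by two facts already available. By Proposition \ref{mom}, it has constant signed geodesic curvature $k_g^{\widetilde\gamma}=-\xi_0$ in $(\bH^1_\C,g_{\text{hyp}})$. By Remark \ref{rem: proj} and cases (1)--(2) of Theorem \ref{thm: Geodesics of AA}, it is the intersection of $\bH^1_\C$ with a Euclidean circle or a Euclidean line. The classical fact \cite[Chapter 16]{Coxeter} is that complete curves of constant curvature in the hyperbolic plane are exactly geodesics ($|k|=0$), hypercycles ($0<|k|<1$), horocycles ($|k|=1$) and circles ($|k|>1$). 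The proof is therefore a matter of matching $|\xi_0|$ against these ranges, and checking the Euclidean picture to confirm the type and compute the radii.

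I will verify the four cases directly in the upper half-plane model, with $\lambda$ as the height over the boundary line $\lambda=0$.

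\textbf{Case $\tau_0\neq0$.} The projection lies on the circle with center $(-\xi_0/\tau_0,\eta_0/\tau_0)$ and radius $1/|\tau_0|$. The signed distance from the center to the boundary is $-\xi_0/\tau_0$, while the radius is $1/|\tau_0|$. Hence:
\begin{itemize}
\item if $\xi_0=0$, the center lies on the boundary, so the curve is a semicircle orthogonal to the boundary, i.e.\ a geodesic;
\item if $|\xi_0|<1$, the circle meets the boundary at a non-right angle, which is a hypercycle;
\item if $|\xi_0|=1$, it is tangent to the boundary, which is a horocycle;
\item if $|\xi_0|>1$, it lies entirely in $\bH^1_\C$, which is a hyperbolic circle.
\end{itemize}
In each case the curve is the whole intersection with $\bH^1_\C$ whenever it is not compact.

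\textbf{Case $\tau_0=0$.} From Theorem \ref{thm: Geodesics of AA}, $\widetilde\gamma$ is a straight line, either the vertical-type line $\lambda\mapsto(\lambda,\frac{\xi_0}{\eta_0}(\lambda-1))$ or the horizontal line $\lambda=1$ when $\eta_0=0$. These are respectively a geodesic (if $\xi_0=0$), a hypercycle (a line through a boundary point not perpendicular to it, if $0<|\xi_0|<1$), or a horocycle (horizontal line, $|\xi_0|=1$).

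For the quantitative statements I would use the standard curvature formulas: a hypercycle at distance $r$ from its geodesic has curvature $\tanh r$, and a hyperbolic circle of radius $R$ has curvature $\coth R$. Combined with $|k_g|=|\xi_0|$, these give $r=\arctanh|\xi_0|$ and $R=\operatorname{arccoth}|\xi_0|$. As a cross-check I would compute directly in one convenient configuration. For the hypercycle, take the line through the origin making angle $\alpha$ with the vertical geodesic; then $\cosh r=1/\cos\alpha$, so $\tanh r=\sin\alpha$, and the Euclidean slope gives $\sin\alpha=|\xi_0|$. For the circle, use $\cosh R$ expressed through the Euclidean center height $h=|\xi_0|/|\tau_0|$ and Euclidean radius $\rho=1/|\tau_0|$, namely $\cosh R=h/\sqrt{h^2-\rho^2}=|\xi_0|/\sqrt{\xi_0^2-1}$, which gives $\coth R=|\xi_0|$.

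The orientation claim follows from the sign in $k_g^{\widetilde\gamma}=-\xi_0$ in Proposition \ref{mom}, together with the reflection $j$, which flips both $t$ and $\xi_0$.

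The main obstacle is the convention bookkeeping rather than any deep step. The connection form $\eta=-\omega_2$ and the sign of the curvature must agree with the unsigned classical curvature formulas. The degenerate $\tau_0=0$ lines also need to be placed correctly, using the continuity remark to treat them as limits of the $\tau_0\neq0$ circles.
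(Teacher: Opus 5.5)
Your proposal is correct and follows essentially the same route as the paper, which obtains the corollary directly from Theorem \ref{thm: Geodesics of AA}, Remark \ref{rem: proj} and Proposition \ref{mom} (constant curvature $k_g^{\widetilde\gamma}=-\xi_0$ along a Euclidean circle or line) together with the classical classification in \cite[Chapter 16]{Coxeter}; your explicit half-plane checks (the ratio of the center's height to the radius equals $|\xi_0|$, $\cosh R=|\xi_0|/\sqrt{\xi_0^2-1}$, and $\tanh r=\sin\alpha=|\xi_0|$) fill in details the paper leaves implicit. The one point to soften is your claim that the curve is the whole intersection with $\bH^1_\C$: this holds for the maximal extremal defined on all of $\R$, but for a general interval $I$ the projection is only an arc of the geodesic, hypercycle, horocycle or circle, which does not affect the classification.
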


Before proceeding with the construction of the flask surface we set up basic notation and terminology.
The hyperbolic distance $d_{\mathrm{hyp}}$ between two points $p_1=(\lambda_1, t_1)$ and $p_2=(\lambda_2, t_2)$ in the right hyperbolic plane $\bH^1_\C$ is given by the relation
\begin{equation*}\label{hyp-distance}
    \cosh \left(d_{\mathrm{hyp}}(p_1,p_2)\right) = 1 + \frac{(t_1 - t_2)^2 + (\lambda_1 - \lambda_2)^2}{2\lambda_1 \lambda_2}.
\end{equation*}

Fix a point $p=(\lambda_0,t_0)\in\bH_{\C}^1$ and a (hyperbolic) radius $R>0$.  The hyperbolic
circle $C_p(R)$ is defined by
$
 d_{\mathrm{hyp}}\bigl((\lambda,t),(\lambda_0,t_0)\bigr)=R.$
Using the hyperbolic distance formula, its equation becomes
\begin{equation*}\label{hypcirclee}
 (\lambda-\lambda_0\cosh R)^2+(t-t_0)^2
 =(\lambda_0\sinh R)^2.
\end{equation*}
Hence $C_p(R)$ is the Euclidean circle with center
$(\lambda_0\cosh R,t_0)$ and radius $\lambda_0\sinh R$.  A convenient
parametrization is given by
\begin{equation}\label{hypcircle1}
 \begin{aligned}
 \lambda(\psi)&=\lambda_0
   (\cosh R+\sinh R\cos\psi),\\
 t(\psi)&=t_0+\lambda_0\sinh R\sin\psi,
 \end{aligned} 
\end{equation}
where $\psi\in(-\pi,\pi).$

Now, consider a curve $\gamma : [0,T]\to \Aa$ to be an extremal trajectory starting at $\gamma(0)=\e$ and having endpoint $\gamma(T)$ on the $a$-fiber over $(1,0)\in\mathbf{H}^1_\C$, such that $T\leq T_{\mathrm{cut}}$. By the horizontality condition and Stokes' Theorem, we have:

$$\Delta a:=a(T)-a(0)=\int_{0}^T\frac{\dot{t}(s)}{\lambda(s)}\,ds=-\iint_{D}\frac{d\lambda\wedge dt}{\lambda^2}=-A_{\mathrm{hyp}}(D),$$
where $A_{\mathrm{hyp}}(D)$ denotes the signed hyperbolic area enclosed by the curve $\widetilde{\gamma}:=\pi(\gamma)\subset \bH_{\C}^1$ with respect to. the Riemannian metric $g_{\mathrm{hyp}}$. Therefore, the height $\Delta a$ prescribes the signed hyperbolic area enclosed by $\widetilde{\gamma}$.
Since the horizontal length of $\gamma$ is equal to the hyperbolic length of its projection \eqref{lhyphor} and $\gamma$ is an extremal trajectory, then $\widetilde{\gamma}$ minimizes the hyperbolic length subject to a fixed enclosed hyperbolic area. In other words, $\widetilde{\gamma}$ is a solution of the isoperimetric problem on the hyperbolic plane. We recall that the isoperimetric inequality on the hyperbolic plane has the following form (by denoting $\mathbf{A}_{\mathrm{hyp}}$ the non-negative hyperbolic area):
\begin{equation*}
    \ell_{\mathrm{hyp}}^2\geq 4\pi\,\mathbf{A}_{\mathrm{hyp}}+\mathbf{A}_{\mathrm{hyp}}^2,
\end{equation*}
where the equality holds if and only if the curve is a hyperbolic circle (see \cite{Schmidt}). Hence, $\widetilde{\gamma}$ is identified with a hyperbolic circle $C_p(R)$ which passes through the projected identity element $\pi(\e)=(1,0)\in \bH^1_{\C}$:

$$(1,0)\in C_p(R)\iff d_{\mathrm{hyp}}\left((\lambda_0,t_0),(1,0) \right)=R.$$
Consequently, by defining the hyperbolic circle $$
\Gamma_R
:=
\left\{
p\in\bH_{\mathbb C}^{1}:
d_{\mathrm{hyp}}\bigl(p,(1,0)\bigr)=R
\right\},
$$
it follows that the centers of all hyperbolic circles of radius $R$
passing through $(1,0)\in \bH^1_{\C}$, belong to $\Gamma_R$.

We let the center $p$ of the hyperbolic circle $C_p(R)$ vary and consider as base the
hyperbolic circle $\Gamma_R$.  We notice that the points of $\Gamma_R$
are parametrized by
\begin{equation}\label{GammaR}
 \lambda_0(u)=\cosh R+\sinh R\cos u,
 \quad
 t_0(u)=\sinh R\sin u, \quad u\in(-\pi,\pi).
\end{equation}

Motivated by the isoperimetric interpretation, we keep fixed the hyperbolic radius $R$ and consider the horizontal lifts of the circles $C_p(R)$, with $p$ varying along $\Gamma_R$. Because all these circles have the same radius, they bound disks of the same hyperbolic area. Consequently, when equipped with a common orientation, Stokes’ theorem shows that their horizontal lifts undergo the same change in fiber height, after one complete hyperbolic circle. This construction mirrors the construction of the Pansu sphere (or bubble set) in the Heisenberg group, where horizontal lifts of Euclidean circles of fixed radius through the projected neutral element generate the corresponding surface. In our case, hyperbolic circles replace the Euclidean circles. The resulting family of lifts gives the surface introduced in the following:

\begin{prop}\label{prop: flask 1}
For every point $p=p(u)\in \Gamma_R,$ let $C_p(R)$ be the hyperbolic circle of radius $R$, centered at $p$. The horizontal lifts of this family of circles generate the surface patch
$$\mathcal{F}(\e,R):=\left\{\sigma_R(u,\psi) \, : (u,\psi)\in (-\pi,\pi)\times (-\pi,\pi)  \right\},$$
where its parametrization $\sigma_R(u,\psi)$ is given by

\begin{equation}\label{flask}
\begin{aligned}
 \lambda_R(u,\psi)
 &=\bigl(\cosh R+\sinh R\cos u\bigr)
   \bigl(\cosh R+\sinh R\cos\psi\bigr),\\
 t_R(u,\psi)
 &=\sinh R\sin u
  +\bigl(\cosh R+\sinh R\cos u\bigr)
   \sinh R\sin\psi,\\
 a_R(u,\psi)
 &=\psi-2\cosh R\,
   \arctan\!\left(e^{-R}\tan\frac\psi2\right),
\end{aligned}
\end{equation}
for all $(u,\psi)\in(-\pi,\pi)\times(-\pi,\pi)$. The surface constructed by the parametrization (\ref{flask}) shall be called the \textit{flask surface} $\mathcal{F}(\e,R)$. 
\end{prop}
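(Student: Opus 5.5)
The plan is to construct $\sigma_R$ directly as the family of horizontal lifts and then check, by one explicit integration, that the fiber coordinate has the stated closed form. Throughout, $R>0$ and the parameter $u\in(-\pi,\pi)$ are fixed first.

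\textbf{Projection.} I take the center $p(u)=(\lambda_0(u),t_0(u))\in\Gamma_R$ given by \eqref{GammaR}. I parametrize $C_{p(u)}(R)$ by \eqref{hypcircle1} with this $\lambda_0=\lambda_0(u)$ and $t_0=t_0(u)$. Substituting \eqref{GammaR} into \eqref{hypcircle1} gives exactly the first two components $\lambda_R(u,\psi)$ and $t_R(u,\psi)$ of \eqref{flask}. So $\pi\circ\sigma_R(u,\cdot)$ is the hyperbolic circle $C_{p(u)}(R)$, and only the fiber coordinate remains to be determined.

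\textbf{Horizontal lift.} By \eqref{horiz ode}, a lift $\psi\mapsto(\lambda_R,t_R,a)$ is horizontal if and only if
$$\partial_\psi a=\frac{\partial_\psi t_R}{\lambda_R}=\frac{\lambda_0(u)\sinh R\cos\psi}{\lambda_0(u)\bigl(\cosh R+\sinh R\cos\psi\bigr)}=\frac{\sinh R\cos\psi}{\cosh R+\sinh R\cos\psi}.$$
The factor $\lambda_0(u)$ cancels. This is the key structural point: the map $(\lambda,t,a)\mapsto(\lambda_0\lambda,\lambda_0 t+t_0,a)$ is the left translation by $(\lambda_0,t_0,0)$ and preserves $\vartheta$. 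Hence the height function along the lift is independent of $u$, which is the Pansu-type feature claimed (equal area gives equal height change). I fix the integration constant by the normalization $a=0$ at $\psi=0$, consistent with \eqref{flask}.

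\textbf{Integration.} I write the integrand as
$$\frac{\sinh R\cos\psi}{\cosh R+\sinh R\cos\psi}=1-\frac{\cosh R}{\cosh R+\sinh R\cos\psi}.$$
I then use the standard antiderivative
$$\int_0^\psi\frac{dv}{A+B\cos v}=\frac{2}{\sqrt{A^2-B^2}}\arctan\!\Bigl(\sqrt{\tfrac{A-B}{A+B}}\,\tan\tfrac{\psi}{2}\Bigr),\qquad |\psi|<\pi,$$
with $A=\cosh R$ and $B=\sinh R$. Here $A^2-B^2=1$ and $\sqrt{(A-B)/(A+B)}=e^{-R}$. This yields $a_R(u,\psi)=\psi-2\cosh R\arctan\bigl(e^{-R}\tan\frac\psi2\bigr)$.

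\textbf{Main obstacle.} The delicate point is the domain, not the computation. On $(-\pi,\pi)$ the antiderivative is continuous, since $\tan(\psi/2)$ is finite there. As $\psi\to\pm\pi$ it tends to $\pm(\pi-\pi\cosh R)$. The total jump $2\pi(1-\cosh R)$ equals $-A_{\mathrm{hyp}}$ of a hyperbolic disk of radius $R$, which is $-2\pi(\cosh R-1)$. This is consistent with the Stokes computation preceding the statement, and it is the reason the open square $(-\pi,\pi)^2$ is used. I would also verify that $\lambda_R>0$ there, which holds because both factors are at least $e^{-R}>0$. Finally, I would verify that $\sigma_R$ is smooth, which is immediate from \eqref{flask}.
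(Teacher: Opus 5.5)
Your proposal is correct and follows the paper's proof essentially step for step. You substitute the center $p(u)$ from \eqref{GammaR} into \eqref{hypcircle1}, impose the horizontality condition \eqref{horiz ode} to get $\partial_\psi a=\sinh R\cos\psi/(\cosh R+\sinh R\cos\psi)=1-\cosh R/(\cosh R+\sinh R\cos\psi)$, integrate via the half-angle antiderivative, and set the integration constant to zero. Your additional observations are also correct: the cancellation of $\lambda_0(u)$ via left-invariance of $\vartheta$, the positivity of $\lambda_R$, and the limits $\mp\pi(\cosh R-1)$ as $\psi\to\pm\pi$ matching the area of a hyperbolic disk. The paper only makes these explicit later, in the remark following the proposition and in Proposition \ref{Prop: flask 2}.
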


\begin{proof}
For every $u\in (-\pi,\pi)$ we substitute the center $p(u)=\left(\lambda_0(u),t_0(u)\right)\in\Gamma_R$ given by (\ref{GammaR}) into the parametrization (\ref{hypcircle1}) of the hyperbolic circle $C_p(R)$. This substitution gives
\begin{align*}
    \lambda&=\bigl(\cosh R+\sinh R\cos u\bigr)
   \bigl(\cosh R+\sinh R\cos\psi\bigr)\\
    t&=\sinh R\sin u
  +\bigl(\cosh R+\sinh R\cos u\bigr)
   \sinh R\sin\psi.
\end{align*}

In our next step, we lift $C_p(R)$ to a horizontal curve in $\Aa$. The horizontality condition (\ref{horiz ode}) gives

\begin{equation*}
 \frac{da}{d\psi}
 =\frac{1}{\lambda}\frac{dt}{d\psi}
 =\frac{\sinh R\cos\psi}
 {\cosh R+\sinh R\cos\psi}=1-\frac{\cosh R}
 {\cosh R+\sinh R\cos\psi}.
\end{equation*}
A simple integration results in 
$
 a(\psi)=\psi-2\cosh R\,
 \arctan\!\left(e^{-R}\tan\frac\psi2\right)+A,
$
where $A$ is an integration constant. Choosing the normalization $A=0$, the third coordinate function becomes
$$a=\psi-2\cosh R\,
   \arctan\!\left(e^{-R}\tan\frac\psi2\right).$$
Combining these three coordinate functions leads exactly to the parametrization (\ref{flask}). Therefore, the union of the resulting horizontal lifts is the surface patch $\mathcal{F}(\e,R)$.
\end{proof}

\begin{rem}
The formulas for $\lambda_R$ and $t_R$ in \eqref{flask} remain well-defined at $\psi=\pm \pi$. The third coordinate function $a_R$ requires separate consideration since $\tan\frac{\psi}{2}$ is not defined at $\psi=\pm \pi$. Nevertheless, the following limits exist and satisfy:
\begin{equation*}
    \lim_{\psi\to -\pi^+}a_R(u,\psi)=\pi(\cosh R-1), \qquad \lim_{\psi\to \pi^-}a_R(u,\psi)=-\pi(\cosh R-1).
\end{equation*}
Accordingly, we set $a_R(u,\pm \pi)=\mp\pi(\cosh R-1)$, for all $u\in (-\pi,\pi)$, while $\sigma_R(-\pi,\psi)=\sigma_R(\pi,\psi)$, for all $\psi\in [-\pi,\pi]$. Therefore, we can extend the parametrization $\sigma_R$ of the flask surface to the square $[-\pi,\pi]\times[-\pi,\pi]$.
These facts come to an agreement with Stokes' theorem, because we obtain
$$\Delta a=-2\pi(\cosh R-1)=-A_{\mathrm{hyp}}(D_R),$$
where $A_{\mathrm{hyp}}(D_R)$ denotes the hyperbolic area of the hyperbolic disk $D_R$, bounded by $C_{p}(R)$.
\end{rem}

\begin{figure}[H]
\centering
\begin{subfigure}{0.45\textwidth}
    \includegraphics[width=1.\textwidth]{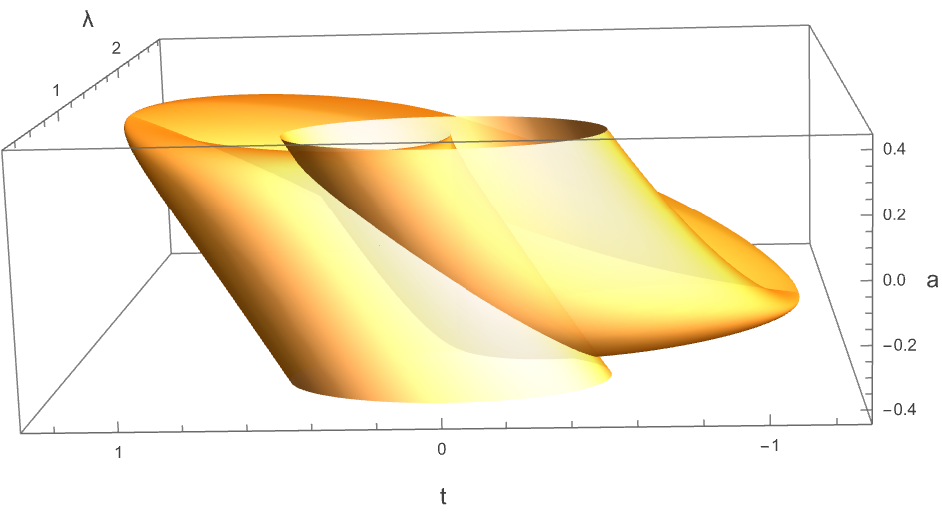}
\end{subfigure}
\begin{subfigure}{0.45\textwidth}
\includegraphics[width=1.\textwidth]{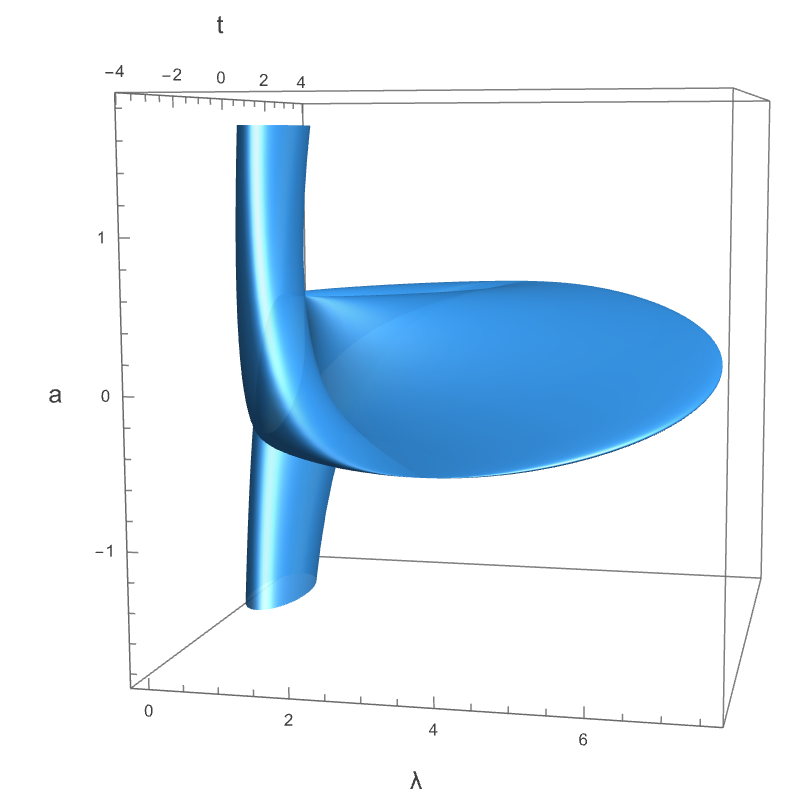}
\end{subfigure}
\begin{subfigure}{0.6\textwidth}
    \includegraphics[width=1.\textwidth]{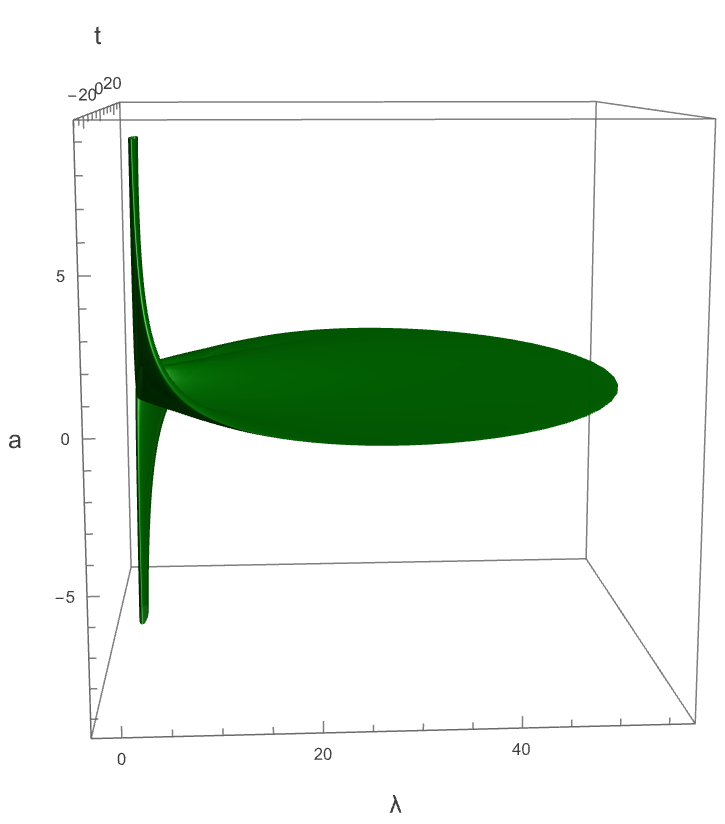}
\end{subfigure}
\caption{Plots of the flask surface $\mathcal{F}(\e,R)$ for $R=0.5$, $R=1$ and $R=2$, respectively.}
\end{figure}
Parametrizing horizontal lifts of hyperbolic circles by arc-length allows us to compare them with sub-Riemannian geodesics of type \eqref{AAGeod3} from Theorem \ref{thm: Geodesics of AA}. We are going to explicit this comparison by requiring a relation between the angular parameter of circles with the angular function appearing in this type of sub-Riemannian geodesics.

We introduce an arc-length $s$ for $C_p(R)$, normalized by the condition $s=0$, for $\psi=0$. Using (\ref{horiz speed}) we find that
\begin{equation}\frac{ds}{d\psi}
 =\frac{\sqrt{\lambda_\psi^2+t_\psi^2}}{\lambda(\psi)}=\frac{\sinh R}{\cosh R+\sinh R\cos\psi}.\label{dpsis}\end{equation}

Integrating (\ref{dpsis}), we obtain the arc-length
\begin{equation}s=2\sinh R \arctan \left( e^{-R} \, \tan\frac{\psi}{2}\right),\label{arclengthss}\end{equation}
where the inverse tangent is understood on the continuous branch determined by $s=0$ at $\psi=0$. Equivalently, the (inverse) angular function $\psi(s)$ satisfies
\begin{equation}
\dot{\psi}(s)=\coth R+\cos\psi(s), \quad \psi(0)=0. \label{psipsi}\end{equation}
 
An application of the chain rule together with (\ref{hypcircle1}) leads to $ \frac{1}{\lambda}\frac{d\lambda}{ds}
 =-\sin\psi,\
 \frac{1}{\lambda}\frac{dt}{ds}
 =\cos\psi$. Let $\widetilde{\gamma}_R(s)$ be a unit speed parametrization of the hyperbolic circle $C_p(R)$ determined by $\psi(0)=0$. To identify the horizontal lifts of these circles as left translations of the extremal trajectories described in Theorem \ref{thm: Geodesics of AA}, let $\bar{\gamma}_R(s)$ be such an extremal trajectory starting at $\e$ and let $\phi(s)$ be its corresponding angular function, satisfying \eqref{eq: angle ode}. Since
$
 \dot{\widetilde\gamma}_R
 =-\sin\psi\,f_1+\cos\psi\,f_2,
$
we compare this expression with its equivalent \eqref{Dpg}. Then, it must be $ \cos\varphi=-\sin\psi,
 \,
 \sin\varphi=\cos\psi.$
Consequently, we may choose
\begin{equation}\label{phipsi}
 \phi(s)=\psi(s)+\frac\pi2
 \quad (\mathrm{mod}\ 2\pi).
\end{equation}

Using (\ref{dpsis}) and (\ref{phipsi}), we obtain 
$$\dot{\phi}=\dot{\psi}=\coth R+\cos\psi=\coth R+\sin\phi.$$
Thus, by (\ref{kg}), follows that $k_{g}^{\widetilde\gamma_R}=\coth R$. By Proposition \ref{mom}, the corresponding momentum is $\xi_0=-\coth R$. Moreover, because $\psi(0)=0$, we have $\phi(0)=\frac{\pi}{2}.$

The following proposition completes the description of the flask as a surface generated by left translations of a specific sub-Riemannian geodesic of the form \eqref{AAGeod3}. Moreover, each generating curve is parametrized over an interval of length $2\pi \sinh R$, equal to the corresponding cut time.

\begin{prop}\label{Prop: flask 2}
Let $\psi(s)$ be a continuous solution of (\ref{psipsi}) and let $R>0$. Then, the flask surface $\mathcal{F}(\e,R)$ admits a parametrization $\Sigma_R(u,s)=\left(\lambda_R(u,s),t_R(u,s),a_R(u,s)\right)$, where
\begin{equation}\begin{aligned}
\lambda_R(u,s)
&=
\bigl(\cosh R+\sinh R\cos u\bigr)
\bigl(\cosh R+\sinh R\cos\psi(s)\bigr),\\
t_R(u,s)
&=
\sinh R\sin u
+
\bigl(\cosh R+\sinh R\cos u\bigr)
\sinh R\sin\psi(s),\\
a_R(u,s)
&=
\psi(s)-\coth Rs,\label{flasks}
\end{aligned}\end{equation}
for all $(u,s)\in [-\pi,\pi]\times[-\pi\sinh R,\pi\sinh R]$. 

Moreover, for every fixed $u\in [-\pi,\pi]$, the curve $\Sigma_R(u,\cdot) : [-\pi\sinh R,\pi\sinh R]\to \Aa$ is a sub-Riemannian geodesic.
\end{prop}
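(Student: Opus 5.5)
The plan is to reparametrize the generating curves of Proposition \ref{prop: flask 1} by hyperbolic arc-length and then recognise each one as a left translate of an extremal trajectory of type \eqref{AAGeod3}.

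\textbf{Step 1: the parametrization.} Fix $u\in[-\pi,\pi]$ and substitute $\psi=\psi(s)$ into \eqref{flask}.
\begin{itemize}
\item The formulas for $\lambda_R$ and $t_R$ in \eqref{flasks} are then immediate.
\item For the third coordinate, \eqref{arclengthss} gives $2\arctan\left(e^{-R}\tan\frac{\psi}{2}\right)=\frac{s}{\sinh R}$ on the continuous branch fixed by $s=0$ at $\psi=0$.
\item Hence $a_R=\psi-2\cosh R\arctan\left(e^{-R}\tan\frac\psi2\right)=\psi(s)-\coth R\, s$.
\item Since $\dot\psi=\coth R+\cos\psi>0$, the map $s\mapsto\psi(s)$ is a strictly increasing bijection. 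By \eqref{arclengthss}, with $\arctan\to\pm\pi/2$, it sends $[-\pi\sinh R,\pi\sinh R]$ onto $[-\pi,\pi]$.
\item At the endpoints, $a_R=\pm\pi-\coth R\cdot(\pm\pi\sinh R)=\mp\pi(\cosh R-1)$, which matches the extension in the Remark following Proposition \ref{prop: flask 1}.
\end{itemize}
So $\Sigma_R$ and the extended $\sigma_R$ have the same image, namely $\mathcal{F}(\e,R)$.

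\textbf{Step 2: each curve is a translated extremal.} Set $\phi(s)=\psi(s)+\frac\pi2$ as in \eqref{phipsi}. The chain-rule identities preceding the proposition give
\[
\frac{\dot\lambda_R}{\lambda_R}=-\sin\psi=\cos\phi,\qquad \frac{\dot t_R}{\lambda_R}=\cos\psi=\sin\phi .
\]
From Step 1, $\dot a_R=\dot\psi-\coth R=\cos\psi=\sin\phi$, and $\dot\phi=\sin\phi-\xi_0$ with $\xi_0=-\coth R$. Thus $s\mapsto\Sigma_R(u,s)$ solves system \eqref{eq: ode_system} with the angular ODE \eqref{eq: ode phi}.

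The system is written in the left-invariant frame $\{U,V\}$, so it is invariant under left translations. Put $s_0=-\pi\sinh R$, $g=\Sigma_R(u,s_0)$, and $\bar\gamma(\sigma):=g^{-1}\cdot\Sigma_R(u,\sigma+s_0)$. Then $\bar\gamma$ starts at $\e$ and solves the same system with initial angle $\phi_0=\phi(s_0)=-\pi/2$. Since $\sin\phi_0-\xi_0=\coth R-1>0$, we are in case (3) of Theorem \ref{thm: Geodesics of AA} with $\tau_0>0$. By uniqueness of solutions of the ODE, $\bar\gamma=\exp_\e(-\coth R,-\pi/2,\cdot)$. Left translations are isometries of $d_\Aa$, so it suffices to show that $\bar\gamma$ is a geodesic on $[0,2\pi\sinh R]$.

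\textbf{Step 3: minimality up to the cut time.} Here $|\xi_0|=\coth R>1$, so Theorem \ref{Thm: cut-time} gives
\[
T_{\mathrm{cut}}=\frac{2\pi}{\sqrt{\coth^2R-1}}=2\pi\sinh R,
\]
which is exactly the length of the parameter interval. By definition of $T_{\mathrm{cut}}$, $\bar\gamma$ is a geodesic on $[0,T]$ for every $T<2\pi\sinh R$. The relation $d_\Aa(\bar\gamma(s_1),\bar\gamma(s_2))=|s_1-s_2|$ then extends to $s_2=2\pi\sinh R$ by continuity of $d_\Aa$ and of $\bar\gamma$, so $\bar\gamma$ is a geodesic on the closed interval.

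I expect the main obstacle to be Step 2: making the left-translation/uniqueness argument clean, and in particular tracking the $\bmod\,2\pi$ branch of $\phi$ so that the initial covector is correctly identified. Step 1 also needs care to check that the $\arctan$ branch in \eqref{arclengthss} is consistent at $\psi=\pm\pi$.
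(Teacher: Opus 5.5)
Your proposal is correct and follows the paper's route: substitute the arc-length relation \eqref{arclengthss} to obtain \eqref{flasks}, recognise each generating curve as a left translate of the extremal with $\xi_0=-\coth R$, and compare the parameter interval with $T_{\mathrm{cut}}=2\pi\sinh R$. The differences are minor: you base the translation at the left endpoint $s_0=-\pi\sinh R$ (initial angle $-\pi/2$, $\tau_0=\coth R-1$) and use ODE uniqueness, whereas the paper translates the extremal with $\phi_0=\pi/2$ by $q(u)=\Sigma_R(u,0)$ and checks the group law explicitly; your choice of base point, together with your continuity argument at $s=T_{\mathrm{cut}}$, is what lets Theorem \ref{Thm: cut-time} apply to the whole interval of length $2\pi\sinh R$, a point the paper leaves implicit.
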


\begin{proof}
Substituting the arc-length form (\ref{arclengthss}) into the third coordinate function of (\ref{flask}) yields $a=\psi-\coth R\, s$. Furthermore, substituting the solution $\psi(s)$ of (\ref{psipsi}) back into the remaining coordinate functions of (\ref{flask}) completes the parametric equations (\ref{flasks}).

We express the generating curves of the flask with respect to the extremal trajectories of Theorem \ref{thm: Geodesics of AA}. For $s=0$, i.e. $\psi=0$, the curves start from the point
$$q(u):=\Sigma_R(u,0)=\left((\cosh R+\sinh R)\lambda_0(u),t_0(u),0 \right),$$
which is not, in general, the identity element $\e=(1,0,0)\in \Aa$. Hence, formula (\ref{AAGeod3}), which was derived for geodesics of $\Aa$ starting at $\e$, cannot be applied directly. 

We consider instead the geodesic starting at $\e$, with initial data
$\xi_0=-\coth R$ and $\phi_0=\pi/2$. Equivalently, its initial covector is  $(\eta_0,\tau_0,\xi_0)=\left(0, 1+\coth R,-\coth R\right)$.

By the formula (\ref{AAGeod3}) the corresponding trajectory $\bar{\gamma}_R(s)$, is described via the parametric equations:
\begin{align*}
    \bar{\lambda}_R(s)&=\left(\cosh R -\sinh R\right)\left( \cosh R+\sinh R\cos \psi(s)\right)\\
    \bar{t}_R(s)&=\left(\cosh R -\sinh R\right)\sinh R \sin \psi(s)\\
    \bar{a}_R(s)&= \psi(s)-\coth R\,s.
\end{align*}
Because $\psi(0)=0$, we have $\bar{\gamma}_R(0)=\e$. 

Since the sub-Riemannian structure is left-invariant, the curve 
$$\gamma_u(s)=q(u)\cdot \bar{\gamma}_R(s),$$
is also a unit-speed sub-Riemannian geodesic. Using the group law together with the elementary identity $\cosh^2 R-\sinh^2 R=1$, we obtain $\gamma_u(s)=:\Sigma_R(u,s)$.

Finally, since $\xi_0=-\coth R$, the cut time of each generating curve is
$$T_{\mathrm{cut}}=\frac{2\pi}{\sqrt{\xi_0^2-1}}=2\pi\sinh R,$$
while the interval $[-\pi\sinh R,\pi\sinh R]$ parametrizes each generating curve of the flask, which stops being a geodesic when its projection completes a full hyperbolic circle.
\end{proof}

\bibliographystyle{abbrv}
\bibliography{Library.bib}   

@article{BPPT,
 author = {E. Bubani and A. Pinamonti and I. D. Platis and D. Tsolis},
 title = {Horizontal curvatures of surfaces in {3D} contact sub-{Riemannian} {Lie} groups},
  JOURNAL = {Preprint },
 year = {Submitted, \url{https://arxiv.org/abs/2603.08330}, 2026},
 howpublished = {Preprint, {arXiv}:2603.08330 [math.{DG}] (2026)},
 url = {https://arxiv.org/abs/2603.08330},
 arXiv = {arXiv:2603.08330}
}

@article {Bubani,
    AUTHOR = {E.~Bubani},
     TITLE = {Hyperbolicity and quasiconformal maps on the affine-additive group},
   JOURNAL = {Doctoral Dissertation},
  FJOURNAL = {},
    VOLUME = {\url{https://boristheses.unibe.ch/6619/}},
      YEAR = {Universit\"at Bern, 2025},
    NUMBER = {},
     PAGES = {},
      ISSN = {},
   MRCLASS = {},
  MRNUMBER = {},
MRREVIEWER = {},
       DOI = {},
       URL = {},
}

@article{BBP1,
 author = {Balogh, Zolt{\'a}n M. and Bubani, Elia and Platis, Ioannis D.},
 title = {Hyperbolicity of the sub-{Riemannian} affine-additive group},
 fjournal = {Annales Fennici Mathematici},
 journal = {Ann. Fenn. Math.},
 issn = {2737-114X},
 volume = {50},
 number = {2},
 pages = {779--793},
 year = {2025},
 language = {English},
 doi = {10.54330/afm.178298},
 zbMATH = {8137312}
}

@incollection {P,
    AUTHOR = {I.D.~Platis},
     TITLE = {Quasiconformal mappings in the hyperbolic {H}eisenberg group
              and a lifting theorem},
 BOOKTITLE = {Lecture {N}otes of {S}eminario {I}nterdisciplinare di
              {M}atematica. {V}ol. {XV}},
    SERIES = {Lect. Notes Semin. Interdiscip. Mat.},
    VOLUME = {15},
     PAGES = {75--105},
 PUBLISHER = {Semin. Interdiscip. Mat. (S.I.M.), Potenza},
      YEAR = {2020},
      ISBN = {978-88-97478-26-3},
   MRCLASS = {30C65},
  MRNUMBER = {4395000},
MRREVIEWER = {Ilmari\ Kangasniemi},
}

@article {T,
    AUTHOR = {R.~Timsit},
     TITLE = {Geometric construction of quasiconformal mappings in the
              {H}eisenberg group},
   JOURNAL = {Conform. Geom. Dyn.},
  FJOURNAL = {Conformal Geometry and Dynamics. An Electronic Journal of the
              American Mathematical Society},
    VOLUME = {22},
      YEAR = {2018},
     PAGES = {99--140},
      ISSN = {1088-4173},
   MRCLASS = {30C65 (30C75 30L10 32V20)},
  MRNUMBER = {3845546},
MRREVIEWER = {Daniel\ Meyer},
       DOI = {10.1090/ecgd/323},
       URL = {https://doi.org/10.1090/ecgd/323},
}

@book {H,
    AUTHOR = {J.~Heinonen},
     TITLE = {Lectures on analysis on metric spaces},
    SERIES = {Universitext},
 PUBLISHER = {Springer-Verlag, New York},
      YEAR = {2001},
     PAGES = {x+140},
      ISBN = {0-387-95104-0},
   MRCLASS = {30C65 (28A75 28A78 46E35)},
  MRNUMBER = {1800917},
MRREVIEWER = {Christopher\ Bishop},
       DOI = {10.1007/978-1-4613-0131-8},
       URL = {https://doi.org/10.1007/978-1-4613-0131-8},
}

@book {A,
    AUTHOR = {T.~Aubin},
     TITLE = {A course in differential geometry},
    SERIES = {Graduate Studies in Mathematics},
    VOLUME = {27},
 PUBLISHER = {American Mathematical Society, Providence, RI},
      YEAR = {2001},
     PAGES = {xii+184},
      ISBN = {0-8218-2709-X},
   MRCLASS = {53-01 (58-01)},
  MRNUMBER = {1799532},
MRREVIEWER = {Liviu\ I.\ Nicolaescu},
       DOI = {10.1090/gsm/027},
       URL = {https://doi.org/10.1090/gsm/027},
}

@article {Z,
    AUTHOR = {V.A.~Zorich},
     TITLE = {Asymptotic geometry and conformal types of
              {C}arnot-{C}arath\'eodory spaces},
   JOURNAL = {Geom. Funct. Anal.},
  FJOURNAL = {Geometric and Functional Analysis},
    VOLUME = {9},
      YEAR = {1999},
    NUMBER = {2},
     PAGES = {393--411},
      ISSN = {1016-443X,1420-8970},
   MRCLASS = {53C17},
  MRNUMBER = {1692466},
       DOI = {10.1007/s000390050092},
       URL = {https://doi.org/10.1007/s000390050092},
}

@Book{CDPT,
 Author = {L.~Capogna and D.~Danielli and S.D.~Pauls and J.T.~Tyson},
 Title = {An introduction to the {Heisenberg} group and the sub-{Riemannian} isoperimetric problem},
 FSeries = {Progress in Mathematics},
 Series = {Prog. Math.},
 ISSN = {0743-1643},
 Volume = {259},
 ISBN = {978-3-7643-8132-5},
 Year = {2007},
 Publisher = {Basel: Birkh{\"a}user},
 Language = {English},
 zbMATH = {5083840},
 Zbl = {1138.53003}
}

@Misc{G,
 Author = {H.~Gr{\"o}tzsch},
 Title = {{\"U}ber einige {Extremalprobleme} der konformen {Abbildung}. {I}, {II}.},
 Year = {1928},
 Language = {German},
 HowPublished = {Berichte {Leipzig} 80; 367-376, 497-502 (1928).},
 zbMATH = {2576847},
 JFM = {54.0378.01}
}

@Book{Mont,
 Author = {R.~Montgomery},
 Title = {A tour of subriemannian geometries, their geodesics and applications},
 FSeries = {Mathematical Surveys and Monographs},
 Series = {Math. Surv. Monogr.},
 ISSN = {0076-5376},
 Volume = {91},
 ISBN = {0-8218-1391-9},
 Year = {2002},
 Publisher = {Providence, RI: American Mathematical Society (AMS)},
 Language = {English},
 zbMATH = {1731778},
 Zbl = {1044.53022}
}

@Article{CS-RT,
 Author = {G.~Citti and A.~Sarti},
 Title = {A cortical based model of perceptual completion in the roto-translation space},
 FJournal = {Journal of Mathematical Imaging and Vision},
 Journal = {J. Math. Imaging Vis.},
 ISSN = {0924-9907},
 Volume = {24},
 Number = {3},
 Pages = {307--326},
 Year = {2006},
 Language = {English},
 DOI = {10.1007/s10851-005-3630-2},
 URL = {amsacta.unibo.it/822/1/corticalpaper4.pdf},
 zbMATH = {5075566},
 Zbl = {1478.92100}
}

@book{D-MLG,
 author = {{Le Donne}, Enrico},
 title = {Metric {Lie} groups. {Carnot}-{Carath{\'e}odory} spaces from the homogeneous viewpoint},
 fseries = {Graduate Texts in Mathematics},
 series = {Grad. Texts Math.},
 issn = {0072-5285},
 volume = {306},
 isbn = {978-3-031-98831-8; 978-3-031-98834-9; 978-3-031-98832-5},
 year = {2025},
 publisher = {Cham: Springer},
 language = {English},
 doi = {10.1007/978-3-031-98832-5},
 zbMATH = {8074076}
}

@book{ABB,
 author = {A.~Agrachev and D.~Barilari and U.~Boscain},
 title = {A comprehensive introduction to sub-{Riemannian} geometry. {From} the {Hamiltonian} viewpoint. {With} an appendix by {Igor} {Zelenko}},
 fseries = {Cambridge Studies in Advanced Mathematics},
 series = {Camb. Stud. Adv. Math.},
 volume = {181},
 isbn = {978-1-108-47635-5; 978-1-108-67732-5},
 year = {2020},
 publisher = {Cambridge: Cambridge University Press},
 language = {English},
 doi = {10.1017/9781108677325},
 zbMATH = {7073879},
 Zbl = {1487.53001}
}

@misc{PBGM-PMP,
 author = {L.S.~Pontryagin and V.G.~Boltyanskii and R. V.~Gamkrelidze and E. F.~Mishchenko},
 title = {The mathematical theory of optimal processes. {Authorized} translation from the {Russian}. {Translator}: {K}. {N}. {Trirogoff}. {Editor}: {L}. {W}. {Neustadt}},
 year = {1962},
 language = {English},
 howpublished = {New {York} and {London}: {Interscience} {Publishers}, a division of {John} {Wiley} \& {Sons}, {Inc}., viii, 360 p. (1962).},
 zbMATH = {3167340},
 Zbl = {0102.32001}
}

@book{AS-CT,
 author = {A. A.~Agrachev and Y. L.~Sachkov },
 title = {Control theory from the geometric viewpoint.},
 fseries = {Encyclopaedia of Mathematical Sciences},
 series = {Encycl. Math. Sci.},
 issn = {0938-0396},
 volume = {87},
 isbn = {3-540-21019-9},
 year = {2004},
 publisher = {Berlin: Springer},
 language = {English},
 zbMATH = {2105696},
 Zbl = {1062.93001}
}

@book{Sachkov,
 author = {Y.~Sachkov},
 title = {Introduction to geometric control. {Translated} from the {Russian}},
 fseries = {Springer Optimization and Its Applications},
 series = {Springer Optim. Appl.},
 issn = {1931-6828},
 volume = {192},
 isbn = {978-3-031-02069-8; 978-3-031-02072-8; 978-3-031-02070-4},
 year = {2022},
 publisher = {Cham: Springer},
 language = {English},
 doi = {10.1007/978-3-031-02070-4},
 zbMATH = {7595725},
 Zbl = {1507.49001}
}

@misc{Maxstrata,
 author = {A.~ Da Silva and L.~ Grama and D.~ Duarte Novaes and M.~ Quispe Tusco},
 title = {Maxwell {Strata} in the sub-{Riemannian} problem on solvable, nonnilpotent regular three-dimensional {Lie} groups},
 year = {2026},
 howpublished = {Preprint, {arXiv}:2602.17782 [math.{OC}] (2026)},
 url = {https://arxiv.org/abs/2602.17782},
 arXiv = {arXiv:2602.17782}
}

@article {BR-CCmetrics,
    AUTHOR = {U.~Boscain and F.~Rossi},
     TITLE = {Invariant {C}arnot-{C}aratheodory metrics on {$S^3,\ {\rm
              SO}(3),\ {\rm SL}(2)$}, and lens spaces},
   JOURNAL = {SIAM J. Control Optim.},
  FJOURNAL = {SIAM Journal on Control and Optimization},
    VOLUME = {47},
      YEAR = {2008},
    NUMBER = {4},
     PAGES = {1851--1878},
      ISSN = {0363-0129,1095-7138},
   MRCLASS = {53C17 (22E30 49Q20)},
  MRNUMBER = {2421332},
MRREVIEWER = {R.\ Felipe\ Monroy-P\'erez},
       DOI = {10.1137/070703727},
       URL = {https://doi.org/10.1137/070703727},
}

@misc{Coxeter,
 author = {Coxeter, H. S. M.},
 title = {Introduction to geometry. 2nd ed},
 year = {1969},
 language = {English},
 howpublished = {New {York} etc.: {John} {Wiley} and {Sons}, {Inc}. {XVI}, 469 p. (1969).},
 zbMATH = {3289061},
 Zbl = {0181.48101}
}

@book{Hatcher,
 author = {Hatcher, Allen},
 title = {Algebraic topology},
 isbn = {0-521-79540-0},
 year = {2002},
 publisher = {Cambridge: Cambridge University Press},
 language = {English},
 zbMATH = {2103273},
 Zbl = {1044.55001}
}

@article{Monti-CCballs,
 author = {Monti, Roberto},
 title = {Some properties of {Carnot}-{Carath}\'eodory balls in the {Heisenberg} group},
 fjournal = {Atti della Accademia Nazionale dei Lincei. Classe di Scienze Fisiche, Matematiche e Naturali. Serie IX. Rendiconti Lincei. Matematica e Applicazioni},
 journal = {Atti Accad. Naz. Lincei, Cl. Sci. Fis. Mat. Nat., IX. Ser., Rend. Lincei, Mat. Appl.},
 issn = {1120-6330},
 volume = {11},
 number = {3},
 pages = {155--167},
 year = {2000},
 language = {English},
 zbMATH = {2217083},
 Zbl = {1197.53064}
}

@book{chic,
 author = {Chicone, Carmen},
 title = {Ordinary differential equations with applications},
 edition = {3rd revised and expanded},
 fseries = {Texts in Applied Mathematics},
 series = {Texts Appl. Math.},
 issn = {0939-2475},
 volume = {34},
 isbn = {978-3-031-51651-1; 978-3-031-51654-2; 978-3-031-51652-8},
 year = {2024},
 publisher = {Cham: Springer},
 language = {English},
 doi = {10.1007/978-3-031-51652-8},
 zbMATH = {7814197},
 Zbl = {1559.34002}
}

@article {BBDS-Finslerspheres,
    AUTHOR = {Barilari, Davide and Boscain, Ugo and Le Donne, Enrico and
              Sigalotti, Mario},
     TITLE = {Sub-{F}insler structures from the time-optimal control
              viewpoint for some nilpotent distributions},
   JOURNAL = {J. Dyn. Control Syst.},
  FJOURNAL = {Journal of Dynamical and Control Systems},
    VOLUME = {23},
      YEAR = {2017},
    NUMBER = {3},
     PAGES = {547--575},
      ISSN = {1079-2724,1573-8698},
   MRCLASS = {49J30 (22E25 53C17 53C60)},
  MRNUMBER = {3657277},
MRREVIEWER = {Daniel\ Massart},
       DOI = {10.1007/s10883-016-9341-8},
       URL = {https://doi.org/10.1007/s10883-016-9341-8},
}

@article{Hof-VP,
 author = {W.C.~Hoffman},
 title = {Higher visual perception as prolongation of the basic {Lie} transformation group},
 fjournal = {Mathematical Biosciences},
 journal = {Math. Biosci.},
 issn = {0025-5564},
 volume = {6},
 pages = {437--471},
 year = {1970},
 language = {English},
 doi = {10.1016/0025-5564(70)90079-9},
 zbMATH = {3382127},
 Zbl = {0241.92007}
}

@article{Citti-Cort,
 author = {G.~Cocci and D.~Barbieri and G.~Citti and A.~Sarti},
 title = {Cortical spatiotemporal dimensionality reduction for visual grouping},
 fjournal = {Neural Computation},
 journal = {Neural Comput.},
 issn = {0899-7667},
 volume = {27},
 number = {6},
 pages = {1252--1293},
 year = {2015},
 language = {English},
 doi = {10.1162/NECO_a_00738},
 url = {hdl.handle.net/10486/669324},
 zbMATH = {7062385},
 Zbl = {1414.92084}
}

@article{H-VC,
 author = {W.C.~Hoffman},
 title = {The visual cortex is a contact bundle},
 fjournal = {Applied Mathematics and Computation},
 journal = {Appl. Math. Comput.},
 issn = {0096-3003},
 volume = {32},
 number = {2-3},
 pages = {137--167},
 year = {1989},
 language = {English},
 doi = {10.1016/0096-3003(89)90091-X},
 zbMATH = {4108570},
 Zbl = {0676.92020}
}

@article{HP-RT,
 author = {R.K.~Hladky and S.D.~Pauls},
 title = {Minimal surfaces in the roto-translation group with applications to a neuro-biological image completion model},
 fjournal = {Journal of Mathematical Imaging and Vision},
 journal = {J. Math. Imaging Vis.},
 issn = {0924-9907},
 volume = {36},
 number = {1},
 pages = {1--27},
 year = {2010},
 language = {English},
 doi = {10.1007/s10851-009-0167-9},
 zbMATH = {5788106},
 Zbl = {1490.53047}
}

@Inbook{LSL-Robot,
author="J. P.~Laumond 
and S.~Sekhavat
and F.~Lamiraux",
title="Guidelines in nonholonomic motion planning for mobile robots",
bookTitle="Robot Motion Planning and Control",
year="1998",
publisher="Springer Berlin Heidelberg",
address="Berlin, Heidelberg",
pages="1--53",
isbn="978-3-540-40917-5",
doi="10.1007/BFb0036070",
url="https://doi.org/10.1007/BFb0036070"
}

@phdthesis{Boscain,
  title={ Motion Planning and Optimal Control for Quantum Mechanical Systems},
  author={U.~Boscain},
  url={},
  year={2006, \url{http://www.cmapx.polytechnique.fr/~boscain/HDR/new-habilitation.pdf}},
  type={Habilitation à diriger des recherches},
  school       = {Universit{\'e} de Bourgogne}
}

@incollection{P-Fin,
 author = {A.~Pascucci},
 title = {Kolmogorov equations in physics and in finance},
 booktitle = {Elliptic and parabolic problems. A special tribute to the work of Haim Brezis},
 isbn = {3-7643-7249-4},
 pages = {353--364},
 year = {2005},
 publisher = {Basel: Birkh{\"a}user},
 language = {English},
 zbMATH = {2225916},
 Zbl = {1085.35089}
}

@book{P-Fin2,
 author = {A.~Pascucci},
 title = {{PDE} and martingale methods in option pricing.},
 fseries = {Bocconi \& Springer Series},
 series = {Bocconi Springer Ser.},
 issn = {2039-1471},
 volume = {2},
 isbn = {978-88-470-1780-1; 978-88-470-1781-8},
 year = {2011},
 publisher = {Milano: Springer; Milano: Bocconi University Press},
 language = {English},
 doi = {10.1007/978-88-470-1781-8},
 zbMATH = {5768418},
 Zbl = {1214.91002}
}

@article {Pansu1,
    AUTHOR = {Pansu, Pierre},
     TITLE = {Une in\'egalit\'e{} isop\'erim\'etrique sur le groupe de
              {H}eisenberg},
   JOURNAL = {C. R. Acad. Sci. Paris S\'er. I Math.},
  FJOURNAL = {Comptes Rendus des S\'eances de l'Acad\'emie des Sciences.
              S\'erie I. Math\'ematique},
    VOLUME = {295},
      YEAR = {1982},
    NUMBER = {2},
     PAGES = {127--130},
      ISSN = {0249-6291},
   MRCLASS = {53C20 (22E25 53C65 58F17)},
  MRNUMBER = {676380},
}

@misc{Pansu2,
 author = {Pansu, P.},
 title = {An isoperimetric inequality on the {Heisenberg} group},
 year = {1983},
 language = {English},
 howpublished = {Differential geometry on homogeneous spaces, {Conf}. {Torino}/{Italy} 1983, {Rend}. {Semin}. {Mat}., {Torino}, {Fasc}. {Spec}., 159-174 (1983).},
 zbMATH = {4032543},
 Zbl = {0634.53034}
}

@article{Schmidt,
 author = {Schmidt, Erhard},
 title = {{\"U}ber die isoperimetrische {Aufgabe} im {{\(n\)}}-dimensionalen {Raum} konstanter negativer {Kr{\"u}mmung}. {I}: {Die} isoperimetrische {{\(n\)}} {Ungleichungen} in der hyperbolischen {Ebene} und f{\"u}r {Rotationsk{\"o}rper} im {{\(n\)}}-dimensionalen hyperbolischen {Raum}},
 fjournal = {Mathematische Zeitschrift},
 journal = {Math. Z.},
 issn = {0025-5874},
 volume = {46},
 pages = {204--230},
 year = {1940},
 language = {German},
 doi = {10.1007/BF01181439},
 url = {https://eudml.org/doc/168901},
 zbMATH = {3036741},
 Zbl = {0022.40302}
}

@book{Dragomir,
 author = {Dragomir, Sorin and Tomassini, Giuseppe},
 title = {Differential geometry and analysis on {CR} manifolds},
 fseries = {Progress in Mathematics},
 series = {Prog. Math.},
 issn = {0743-1643},
 volume = {246},
 isbn = {0-8176-4388-5},
 year = {2006},
 publisher = {Basel: Birkh{\"a}user},
 language = {English},
 zbMATH = {5019706},
 Zbl = {1099.32008}
}

@book{Goldman,
 author = {Goldman, William M.},
 title = {Complex hyperbolic geometry},
 fseries = {Oxford Mathematical Monographs},
 series = {Oxford Math. Monogr.},
 isbn = {0-19-853793-X},
 year = {1999},
 publisher = {Oxford: Clarendon Press},
 language = {English},
 zbMATH = {1270600},
 Zbl = {0939.32024}
}

@article {AH-Cartan,
    AUTHOR = {Ardentov, Andrei and Hakavuori, Eero},
     TITLE = {Cut time in the sub-{R}iemannian problem on the {C}artan
              group},
   JOURNAL = {ESAIM Control Optim. Calc. Var.},
  FJOURNAL = {ESAIM. Control, Optimisation and Calculus of Variations},
    VOLUME = {28},
      YEAR = {2022},
     PAGES = {Paper No. 12, 19},
      ISSN = {1292-8119,1262-3377},
   MRCLASS = {53C17 (22E25 49K15)},
  MRNUMBER = {4379606},
MRREVIEWER = {Andrea\ Pinamonti},
       DOI = {10.1051/cocv/2022006},
       URL = {https://doi.org/10.1051/cocv/2022006},
}

@article {AS-Engel,
    AUTHOR = {Ardentov, A. A. and Sachkov, Yu. L.},
     TITLE = {Cut time in sub-{R}iemannian problem on {E}ngel group},
   JOURNAL = {ESAIM Control Optim. Calc. Var.},
  FJOURNAL = {ESAIM. Control, Optimisation and Calculus of Variations},
    VOLUME = {21},
      YEAR = {2015},
    NUMBER = {4},
     PAGES = {958--988},
      ISSN = {1292-8119,1262-3377},
   MRCLASS = {53C17 (22E25)},
  MRNUMBER = {3395751},
MRREVIEWER = {Nicolas\ Juillet},
       DOI = {10.1051/cocv/2015027},
       URL = {https://doi.org/10.1051/cocv/2015027},
}
\Addresses

\end{document}